\documentclass[reqno]{amsart}
\usepackage{graphicx} 
\usepackage[noadjust]{cite}
\usepackage{amsmath}
\usepackage{amssymb}
\usepackage{amsthm}
\usepackage{xcolor}
\usepackage{mathabx}
\usepackage{tikz}
\usepackage{quiver}
\usepackage{hyperref}
\newtheorem{thm}{Theorem}[section]
\newtheorem{lemma}[thm]{Lemma}
\newtheorem{prop}[thm]{Proposition}
\newtheorem{cor}[thm]{Corollary}
\newtheorem{claim}{Claim}[thm]
\theoremstyle{definition}
\newtheorem{definition}[thm]{Definition}
\newtheorem{notation}[thm]{Notation}
\newenvironment{claimproof}[1]{\par\noindent\emph{Proof of Claim.}\space#1}{\hfill $\blacksquare$}

\title{The Morse Local-to-Global Property for Graph Products}
\author{Joshua Perlmutter}
\date{\today}

\begin{document}

\begin{abstract}
The Morse local-to-global property generalizes the local-to-global property for quasi-geodesics in a hyperbolic space. We show that graph products of infinite Morse local-to-global groups have the Morse local-to-global property. To achieve this, we generalize the maximization procedure from \cite{abbott2021largest} for relatively hierarchically hyperbolic groups with clean containers. Under mild conditions satisfied by graph products, we show that stable embeddings into a relatively hierarchically hyperbolic space are exactly those which are quasi-isometrically embedded in the top level hyperbolic space by the orbit map. This shows that graph products of any infinite groups with no isolated vertices are Morse detectable. 
\end{abstract}

\maketitle

\section{Introduction}

One fundamental property of hyperbolic space is that paths that are locally quasi-geodesic must themselves be globally quasi-geodesic. In fact, \cite[Proposition 7.2.E]{gromov1987hyperbolic} shows that this quasi-geodesic property can be taken to be the definition of a hyperbolic space. Another key property of quasi-geodesics in hyperbolic space, known as the \emph{Morse Lemma} \cite[Proposition 7.2.A]{gromov1987hyperbolic}, states that quasi-geodesics with the same endpoints must fellow-travel. Non-hyperbolic spaces can have Morse quasi-geodesics, which are quasi-geodesics that satisfy the Morse lemma. In fact, if all geodesics are uniformly Morse, then the space is hyperbolic. Given the close relationship between these two properties in a hyperbolic space, it is natural to ask if Morse quasi-geodesics in a non-hyperbolic space still satisfy the local-to-global quasi-geodesic property. The \emph{Morse local-to-global property} for metric spaces was introduced by Russell, Spriano, and Tran \cite{russell2022local} to study exactly this question.

A metric space has the Morse local-to-global property if, roughly speaking, every path which is locally a Morse quasi-geodesic must be a global Morse quasi-geodesic. A group is then Morse local-to-global if its Cayley graph has this property, as this property is a quasi-isometry invariant \cite{russell2022local}. There are many known examples of Morse local-to-global groups and spaces: direct products of infinite groups \cite{russell2022local}; hierarchically hyperbolic groups \cite{russell2022local}; groups hyperbolic relative to Morse local-to-global groups \cite{russell2022local}; injective metric spaces \cite{sisto2024morse}; and any $C'(1/9)$-small cancellation group with a $\sigma$-compact Morse boundary \cite{he2024sigma}.

There are several properties of hyperbolic spaces whose proof relies solely on the fact that local quasi-geodesics are global quasi-geodesics, and so analogs of these properties hold in a Morse local-to-global space. By using this technique and beyond, Morse local-to-global spaces have been shown to have many robust properties:
\begin{itemize}
    \item A combination theorem for stable subgroups \cite[Theorem 3.1]{russell2022local};
    \item A trichotomy law \cite[Corollary 4.9]{russell2022local};
    \item A Cartan-Hadamard-style Theorem \cite[Theorem 3.15]{russell2022local};
    \item A growth rate gap for stable subgroups \cite[Theorem 5.1]{cordes2022regularity}; 
    \item A regular language of Morse geodesic words \cite[Theorem 3.2]{cordes2022regularity};
    \item A strongly $\sigma$-compact Morse boundary \cite[Theorem 4.3]{he2024sigma}.
\end{itemize}

Morse local-to-global groups and spaces continue to be an active area of study, both to search for new properties implied by the Morse local-to-global condition, and to find new examples of Morse local-to-global groups and spaces. Russell-Spriano-Tran asked if graph products of Morse local-to-global groups are themselves Morse local-to-global \cite[Question 3]{russell2022local}. Graph products were introduced by Green \cite{green1990graph}. Generalizing the notion of a right-angled Artin group, graph products assign finitely generated groups to every vertex in a finite simplicial graph. The graph product is then the free product of all vertex groups, with the relation that elements in two different vertex groups commute if and only if their respective vertices are connected by an edge. Graph products are thus a bridge between free products and direct products of groups. Our main result answers \cite[Question 3]{russell2022local} for the case of infinite Morse local-to-global groups:

\begin{thm}
\label{intro graph prod}
Graph products of infinite Morse local-to-global groups are Morse local-to-global.
\end{thm}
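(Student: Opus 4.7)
The plan is to equip the graph product with a relatively hierarchically hyperbolic (RHH) structure, then combine a generalization of the maximization procedure of \cite{abbott2021largest} with Morse detectability to promote local Morse quasi-geodesics to global ones. Let $G_\Gamma$ denote the graph product of infinite Morse local-to-global groups over the defining graph $\Gamma$.

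First, I would build an RHH structure on $G_\Gamma$ whose peripheral subgroups are (cosets of) subgroups generated by cliques of $\Gamma$, with top-level hyperbolic space obtained from a coned-off Cayley graph or extension-graph type construction. The key technical input is that this structure has \emph{clean containers}: the rigid combinatorics of graph products, in which commutation is governed entirely by edges of $\Gamma$, is what should make this verifiable. Second, I would generalize the maximization procedure of \cite{abbott2021largest} from HHGs to RHH groups with clean containers and apply it to the structure above. The outcome is the characterization of stable embeddings promised in the abstract: a subspace is stably embedded in $G_\Gamma$ exactly when its orbit map to the top-level hyperbolic space is a quasi-isometric embedding. This yields Morse detectability for $G_\Gamma$ and effectively reduces the study of Morse quasi-geodesics to quasi-geodesics in a hyperbolic space.

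Third, I would deduce the Morse local-to-global property as follows. Given a local Morse quasi-geodesic $\gamma$ in $G_\Gamma$, Morse detectability forces its image in the top-level hyperbolic space to be a local quasi-geodesic with uniform constants, hence a global quasi-geodesic by the classical hyperbolic local-to-global. To recover the global Morse property back in $G_\Gamma$, I would invoke the Morse local-to-global hypothesis on each vertex group to control the behavior of $\gamma$ inside each peripheral, and use an RHH distance formula to assemble these local controls with the hyperbolic control in the top level. The structural shape of the argument mirrors the relatively hyperbolic case \cite{russell2022local}, but adapted to the weaker RHH framework.

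The main obstacle will be the second step: extending the maximization procedure to the RHH setting with clean containers and ensuring graph products actually fit this framework. In an HHG the finite, rigid combinatorics of nesting and orthogonality drive the procedure, whereas in an RHH structure one must track interactions between peripheral and hyperbolic domains and argue that the procedure terminates in a structure whose stably embedded subsets are detected purely by the top level. Once this technical core is established, the Morse detectability statement and the bootstrap from local to global Morse quasi-geodesics should follow along standard lines.
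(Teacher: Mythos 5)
Your overall architecture for the connected case is in the spirit of the paper's argument: build a relative HHS structure on the graph product, generalize the maximization procedure of Abbott--Behrstock--Durham, characterize stable embeddings via the top-level hyperbolic space, and conclude Morse detectability. However, your third step contains a wrong mechanism, and your proposal has a genuine gap regarding isolated vertices.

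On step three: once Morse detectability is established, the paper simply applies the abstract black-box theorem of Russell--Spriano--Tran (Theorem 4.18 of their paper, stated here as Theorem \ref{Mdetectable is MLTG}) that Morse detectable implies Morse local-to-global. That theorem requires no input about the peripherals, no distance-formula bootstrap, and no hypothesis on the vertex groups whatsoever. Your proposed ``bootstrap,'' where you invoke the Morse local-to-global hypothesis on each vertex group to control $\gamma$ inside each peripheral and then assemble via the distance formula, is modeled on the relatively hyperbolic combination theorem, not on Morse detectability; it is unnecessary machinery here, and in fact would mislead you about where the hypothesis on the vertex groups is actually used. Indeed, the paper's Corollary \ref{multivertex} shows that a connected graph product with no isolated vertices is Morse local-to-global as long as every vertex group is merely \emph{infinite} -- the vertex groups can all be non-MLTG. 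So a distance-formula argument that reaches into vertex groups and uses their MLTG property would be proving a weaker statement than the one available.

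The genuine gap is isolated vertices. The maximization procedure requires unbounded minimal products: every $\sqsubseteq$-minimal $U$ with $\mathbf{F}_U$ unbounded must have $\mathbf{E}_U$ unbounded. For an isolated vertex $v$ with infinite $G_v$, the domain $[g\{v\}]$ has unbounded $\mathbf{F}$ but nothing orthogonal, so $\mathbf{E}$ is bounded and the hypothesis fails. Your construction therefore does not even apply when $\Gamma$ has isolated vertices, yet the theorem you aim to prove has no such restriction. The paper handles this by observing that a graph product decomposes as the free product of the subgroups corresponding to its connected components, applying Corollary \ref{multivertex} to the multi-vertex components, using the MLTG hypothesis directly for the singleton components (this is the \emph{only} place that hypothesis is used), and then invoking the fact that free products of MLTG groups are MLTG. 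You need to add this reduction to make your proposal prove the stated theorem.
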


In fact, we prove in Corollary \ref{multivertex} that if every vertex group in a graph product with no isolated vertices is infinite, then the group is Morse local-to-global. In particular, as long as a graph product has no isolated vertices, every vertex group could be a non-Morse local-to-global group, yet the resulting graph product will be Morse local-to-global.

Despite graph products being a rich area of study themselves, to prove Theorem \ref{intro graph prod} we instead rely on the fact that graph products are \emph{relatively hierarchically hyperbolic groups} \cite[Theorem 4.22]{berlyne2022hierarchical}. Relatively hierarchically hyperbolic groups and spaces were introduced by \cite{Behrstock_2017} and \cite{behrstock2019hierarchically} to generalize the hierarchy structure of mapping class groups. Relatively hierarchically hyperbolic groups and spaces consist of a collection of hyperbolic spaces along with three relations between them: nesting, orthogonality, and transversality. Nesting forms a partial order for which there exists a largest element, called the \emph{top level space}. The proof structure of Theorem \ref{intro graph prod} generalizes the argument of \cite[Theorem 4.20]{russell2022local}, which proves that hierarchically hyperbolic spaces are Morse local-to-global, which in turn relies on a procedure for hierarchically hyperbolic spaces introduced by Abbott-Behrstock-Durham known as \emph{maximization}. Because graph products are relatively hierarchically hyperbolic groups, we first generalize \cite[Theorem 3.7]{abbott2021largest}.

\begin{thm}
\label{intro max}
Every relatively hierarchically hyperbolic space with the bounded domain dichotomy and clean containers admits a relatively hierarchically hyperbolic structure with relatively unbounded products.
\end{thm}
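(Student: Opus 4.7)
The plan is to adapt the Abbott--Behrstock--Durham maximization procedure from \cite{abbott2021largest} to the relative setting, using clean containers as the key structural input that makes the merging of bounded orthogonal domains well-behaved. At the highest level, the obstruction to having unbounded products in the original structure is the existence of a domain $U$ that is orthogonal to a bounded domain $V$; the maximization idea is to ``absorb'' such $V$ into the container of $U^{\perp}$ so that the product region associated to $U$ is enlarged to be unbounded in both factors. Clean containers ensure that the container of the orthogonal complement of $U$ is itself orthogonal to $U$, so this absorption does not create new transversality that would violate the consistency axioms.

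First I would partition the index set $\mathfrak{S}$ according to the equivalence relation generated by ``$U \sim V$ if $V$ is nested in the container of $U^{\perp}$ and has bounded coordinate space,'' keeping the unbounded domains as singleton equivalence classes. The new index set $\mathfrak{S}'$ is the collection of equivalence classes, and for each class $[U]$ I would define the new coordinate hyperbolic space as the old coordinate space $\mathcal{C}U$ of the unique unbounded representative, together with an enlarged ``standard product region'' built by taking the product of $\mathcal{C}U$ with the coordinate spaces of all the bounded domains it has absorbed. The nesting, orthogonality and transversality relations on $\mathfrak{S}'$ are inherited from the relations on chosen representatives; projections, hierarchy paths, and the peripheral subset structure are inherited as well. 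The bounded domain dichotomy guarantees that ``bounded'' is a uniform notion and so this procedure is well-defined with uniform constants, while clean containers guarantees that the merging is consistent with the partial order.

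Next, I would verify the relatively hierarchically hyperbolic axioms for this new structure one at a time: hyperbolicity of the new coordinate spaces is immediate since we have not changed them; projections, consistency, bounded geodesic image, partial realization, uniqueness, and large links for $\mathfrak{S}'$ all follow from the corresponding axioms for $\mathfrak{S}$ applied to the unbounded representatives, using the clean containers property to compare the behavior of absorbed bounded domains with that of their container. I would then check the relatively unbounded products condition: given $[U] \perp [V]$ in $\mathfrak{S}'$, the chosen unbounded representatives $U$ and $V$ are orthogonal in $\mathfrak{S}$, and since neither was absorbed, both $\mathcal{C}U$ and $\mathcal{C}V$ are unbounded, which is precisely what is required (up to handling peripheral contributions in the relative setting).

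The main obstacle I expect is the verification of the relative-hyperbolicity portion of the axioms, in particular that the top-level coordinate space of $\mathfrak{S}'$ remains hyperbolic relative to the correct peripheral collection and that bounded geodesic image still holds in its relative form. In the absolute HHS setting of \cite{abbott2021largest} this is automatic, but in the RHHS setting one must track how absorbing a bounded orthogonal domain interacts with the peripheral subsets attached to each coordinate space. Clean containers is exactly the hypothesis that prevents a merged domain from spuriously becoming transverse to something in its container's orthogonal complement, and I expect the bulk of the technical work to consist of showing that, under this hypothesis together with bounded domain dichotomy, the peripheral structure transports cleanly from $\mathfrak{S}$ to $\mathfrak{S}'$ and the resulting relative projections inherit the bounded geodesic image and uniqueness axioms with controlled constants.
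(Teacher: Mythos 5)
Your proposal takes a genuinely different route from the paper, and unfortunately there are several gaps that I do not think can be patched without essentially rediscovering the paper's construction.

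The paper does not merge domains into equivalence classes. Instead it \emph{prunes} the index set: the new index set $\mathfrak{T}\subset\mathfrak{S}$ consists of the maximal element $S$, the $\sqsubseteq$-minimal domains with unbounded coordinate space, and the domains $U$ with both $\mathbf{F}_U$ and $\mathbf{E}_U$ unbounded. All other domains are simply discarded. The coordinate spaces of the retained non-maximal domains are unchanged. Crucially, the top-level coordinate space is \emph{replaced}: $\mathcal{T}_S$ is taken to be the factored space $\widehat{\mathcal{X}}_{\mathfrak{S}^{M+}}$, where $\mathfrak{S}^{M+}$ is the nesting-closed family of domains having unbounded orthogonal complements together with the minimal unbounded domains. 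This factored space is hyperbolic because after coning off $\mathfrak{S}^{M+}$ no two remaining orthogonal domains are both large. This replacement is the linchpin of the whole argument: when a domain $U\in\mathfrak{S}\setminus\mathfrak{T}$ is dropped, the distance information in $CU$ has to be visible somewhere, and the paper shows (in the uniqueness verification) that any such $U$ with a large projection necessarily lies in $\mathfrak{S}\setminus\mathfrak{S}^{M+}$, hence is a domain of the HHS structure on $\mathcal{T}_S$, so $\mathcal{T}_S$ sees that distance via its own distance formula.

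Your proposal has three concrete problems. First, there is no reason the equivalence classes under your relation contain a ``unique unbounded representative''; the bounded domain dichotomy only divides domains into uniformly bounded and unbounded ones and says nothing about uniqueness of unbounded members of a class, and the relation ``$V$ is nested in the container of $U^\perp$'' is itself not well-defined without fixing the ambient domain $W$ in the container axiom. Second, you keep the original coordinate space $\mathcal{C}U$ for the representative of each class, so the index set has effectively shrunk but no coordinate space has grown; this means the uniqueness axiom will fail for pairs of points whose only relevant domain was one of the absorbed bounded domains, and you give no mechanism to recover that distance. This is precisely the step the paper handles by replacing $CS$ with the factored space $\widehat{\mathcal{X}}_{\mathfrak{S}^{M+}}$, and your proposal does not mention changing the top-level space at all. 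Third, the phrase ``peripheral subsets'' and ``relative-hyperbolicity portion of the axioms'' suggests a reading of ``relative HHS'' as a relative-hyperbolicity-style structure with peripherals; in fact, in the definition used here, ``relative'' just means that $\sqsubseteq$-minimal coordinate spaces need not be hyperbolic, and there is no peripheral data to transport. The actual technical burden (which your sketch underestimates) is the re-verification of all twelve axioms for $(\mathcal{X},\mathfrak{T})$, with the large links and bounded geodesic image checks for the new top-level domain requiring the generalization of active subpaths and the hierarchy-path machinery to the relative setting, not any peripheral bookkeeping.
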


The technical assumptions of bounded domain dichotomy and clean containers are mild assumptions satisfied, for example, every graph product; see Section \ref{section abd} for the precise definitions.

The new relatively hierarchically hyperbolic structure produced by Theorem \ref{intro max} is called the \emph{maximized structure}. From this point, we go on to show that the top level space in the maximized structure is a hyperbolic space which captures the geometry of the Morse quasi-geodesics in the overall space. Such a space is called a \emph{Morse detectability space} \cite{russell2022local}, and having a Morse detectability space is sufficient to prove that a space is Morse local-to-global \cite[Theorem 4.18]{russell2022local}. In our arguments we focus on \emph{stable embeddings}, of which Morse quasi-geodesics are an example. A stable embedding is a quasi-isometric embedding with the additional property that any two quasi-geodesics with endpoints in the image of the embedding are contained in the a uniform neighborhood of each other.

\begin{thm}
\label{intro stable}
Let $\mathcal{X}$ be a geodesic metric space, and let $(\mathcal{X},\mathfrak{S})$ be a relative HHS with $|\mathfrak{S}|>1$, clean containers, the bounded domain dichotomy, and unbounded minimal products. A quasi-isometric embedding $\gamma:\mathcal{Y}\to\mathcal{X}$ is a stable embedding if and only if $\pi_S\circ\gamma$ is a quasi-isometric embedding into $\mathcal{T}_S$, where $\mathcal{T}_S$ is the top level space of the maximized structure.
\end{thm}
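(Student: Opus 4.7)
The plan is to prove the two implications separately. The forward direction uses that stable embeddings inherit the Morse property, together with the unbounded minimal products hypothesis, to rule out any non-trivial ``horizontal'' contribution to distances coming from non-top-level domains. The backward direction uses that $\mathcal{T}_S$ is hyperbolic (so any QI-embedded image is automatically stable there) together with the distance formula to transport fellow-traveling back into $\mathcal{X}$.

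For the forward direction, the coarse Lipschitz property of $\pi_S$ immediately yields the upper bound $d_{\mathcal{T}_S}(\pi_S\gamma(x),\pi_S\gamma(y)) \lesssim d_{\mathcal{Y}}(x,y)$. For the lower bound I would argue by contradiction: if it fails, there are sequences $x_n, y_n \in \mathcal{Y}$ with $d_{\mathcal{Y}}(x_n,y_n)\to\infty$ but $d_{\mathcal{T}_S}(\pi_S\gamma(x_n),\pi_S\gamma(y_n))$ bounded. Since $\gamma$ is a quasi-isometric embedding, $d_{\mathcal{X}}(\gamma(x_n),\gamma(y_n))$ grows, and the relative HHS distance formula forces large projections at some domain $V_n\neq S$. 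A pigeonhole argument using the bounded domain dichotomy yields a fixed domain $V\neq S$ along which $\gamma$ has unbounded projection. The unbounded minimal products hypothesis then supplies an unbounded factor orthogonal to $V$, which can be used to construct quasi-isometrically embedded product regions near $\gamma(\mathcal{Y})$ whose ``width'' tends to infinity. Such product regions obstruct the Morse property inherited by $\gamma$ from stability, yielding the required contradiction.

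For the backward direction, suppose $\pi_S\circ\gamma$ is a quasi-isometric embedding into the hyperbolic space $\mathcal{T}_S$, so its image is a stable subset there. Applying the distance formula along $\gamma$ and comparing with this hypothesis forces each non-top projection $\pi_V\circ\gamma$ to have uniformly bounded diameter. Now let $\alpha, \beta$ be two quasi-geodesics in $\mathcal{X}$ with common endpoints in $\gamma(\mathcal{Y})$. Their projections $\pi_S\alpha$ and $\pi_S\beta$ are unparameterized quasi-geodesics in $\mathcal{T}_S$ with common endpoints, hence fellow-travel by hyperbolicity. Bounded geodesic image combined with the consistency axioms then propagates the bounded-projection condition to $\alpha$ and $\beta$ at every non-top $V$. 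Reassembling via the distance formula yields uniform fellow-traveling of $\alpha$ and $\beta$ in $\mathcal{X}$, which is exactly the extra condition beyond quasi-isometry needed for stability.

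The main obstacle will be converting ``unbounded projection at some non-top domain $V$'' into a concrete obstruction to Morse-ness in the forward direction. In the hierarchically hyperbolic setting product regions embed as genuine quasi-flats, but in the relative HHS setting peripheral behavior at $V$ can be subtle; here I expect clean containers to be essential so that the orthogonal factor embeds independently rather than being swallowed by nested peripheral structure. A secondary subtlety is that the relative HHS distance formula controls distances only up to behavior in the peripherals, so additional bookkeeping is needed to pass from bounded projections at all non-top domains to genuine fellow-traveling in $\mathcal{X}$, rather than merely in the cone-off.
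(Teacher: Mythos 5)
Your broad plan---reduce the claim to ``$\gamma(\mathcal{Y})$ has bounded projections to all non-top domains'' in both directions, and then use product regions to obstruct Morse-ness and hyperbolicity of $\mathcal{T}_S$ to import stability---is the right shape, and matches the paper's decomposition into Lemma \ref{bounded proj iff CS qgeo} and Theorem \ref{abd4.4}. However, there are two genuine gaps.

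First, the forward direction as written never invokes the maximization procedure of Theorem~\ref{clean containers ABD}, which is essential. The hypothesis of \emph{unbounded minimal products} says only that $\sqsubseteq$-minimal domains with unbounded $\mathbf{F}_U$ have unbounded $\mathbf{E}_U$; by itself this supplies no orthogonal factor to an arbitrary non-top domain $V$ with large projection. What makes the obstruction argument work is first passing to the maximized structure $(\mathcal{X},\mathfrak{T})$, in which \emph{every} non-top domain has both $\mathbf{F}_U$ and $\mathbf{E}_U$ unbounded (this uses bounded domain dichotomy, clean containers, and unbounded minimal products together, and is where clean containers actually enter --- in verifying the Container axiom for $\mathfrak{T}$). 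Only then does ``large projection at some $V\ne S$ in $\mathfrak{T}$'' put a long Morse quasi-geodesic near a genuine direct product with unbounded factors, which is Morse-limited. Your ``pigeonhole to a fixed $V$'' is also not needed and not quite right; the argument in the paper runs with a sequence $U_i$ of domains and active subpaths.

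Second, the backward direction contains a step that is false as stated: arbitrary quasi-geodesics $\alpha,\beta$ in $\mathcal{X}$ do \emph{not} project to unparametrized quasi-geodesics in $\mathcal{T}_S$; that property holds for hierarchy paths, not for general quasi-geodesics. Consequently the proposed fellow-traveling argument in $\mathcal{T}_S$, followed by BGI-propagation of bounded projections to $\alpha$ and $\beta$, does not get off the ground. The paper sidesteps this entirely: from bounded projections one shows $\gamma(\mathcal{Y})$ is hierarchically quasi-convex, uses the gate map to verify the three \emph{contracting} conditions of Definition~\ref{contracting}, and then invokes Durham--Taylor's theorem that contracting quasi-isometric embeddings are stable. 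If you want to repair your route, you will end up reproducing something equivalent to the contracting argument, since that is precisely the machinery that converts ``bounded projections'' into a property of \emph{arbitrary} quasi-geodesics with endpoints in $\gamma(\mathcal{Y})$ without requiring them to project well.
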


We note that Balasubramanya-Chesser-Kerr-Mangahas-Trin simultaneously and independently prove a result similar to Theorem \ref{intro stable} that provides space which detects stable subgroups of a graph product of infinite groups with no isolated vertices \cite[Theorem 1.2]{balasubramanya2025stable}. Because their result relies on tools specific to stable subgroups rather than general stable embeddings, \cite[Theorem 1.2]{balasubramanya2025stable} alone does not imply that the space they construct is a Morse detectability space.

\subsection*{Outline}

Section \ref{sec Background} provides necessary background information for the paper. We begin by discussing quasi-geodesics and stable embeddings, cumulating in the Morse local-to-global property at the end of Section \ref{subsec MLTG}. In Section \ref{subsec HHS} we give the definition and basic properties of relatively hierarchically hyperbolic groups and spaces. We provide a more in-depth discussion on hierarchy paths in Section \ref{subsec hpaths}, including modifying the standard definition and explaining why such a modification does not preclude us from using certain results based on the original definition. In Section \ref{subsec graph prod} we provide the definition and basic properties of graph products, including the relative hierarchy structure on graph products from \cite[Theorem 4.22]{berlyne2022hierarchical}. The purpose of Section \ref{big section construction} is to generalize the methods and results of \cite{abbott2021largest} for relative HHS. To do this, in Section \ref{subsec active}, we add details to the proof of \cite[Proposition 20.1]{casals2022real} to show that it generalizes to relative HHS. From there, we generalize the maximization procedure from \cite[Theorem 3.7]{abbott2021largest} in Section \ref{section abd}, proving Theorem \ref{intro max}. Section \ref{subsec bounded proj} then generalizes \cite[Theorem 4.4]{abbott2021largest}. The main goal of Section \ref{gp} is to generalize \cite[Corollary 6.2]{abbott2021largest} for a relative HHS with certain nice properties, proving Theorem \ref{intro stable}. The final section, Section \ref{sec MLTG}, then shows that graph products of infinite groups with no isolated vertices satisfy such nice properties, and are thus Morse local-to-global, which is then used to prove Theorem \ref{intro graph prod}.

\subsection*{Acknowledgments}

The author is grateful to his PhD advisor, Carolyn Abbott. The author also wishes to thank Jacob Russell for suggesting this problem, as well as providing guidance and insight. The author was partially supported by NSF grant DMS-2106906.

\section{Background}
\label{sec Background}

\subsection{The Morse Local-to-Global Property}
\label{subsec MLTG}

We begin this section by recalling some basic definitions about quasi-isometries and metric spaces.

\begin{definition}
Let $\mathcal{X},\mathcal{Y}$ be metric spaces. A map $\gamma\colon\mathcal{Y}\to\mathcal{X}$ is a \emph{$(\lambda,\varepsilon)$-quasi-isometric embedding} if for any $x,y\in\mathcal{Y}$
$$ \frac{1}{\lambda}\cdot d_{\mathcal{Y}}(x,y)-\varepsilon\leq d_{\mathcal{X}}(\gamma(x),\gamma(y))\leq \lambda\cdot d_{\mathcal{Y}}(x,y)+\varepsilon.$$
\end{definition}

\begin{definition}
A \emph{$(\lambda,\varepsilon)$-quasi-geodesic} is a $(\lambda,\varepsilon)$-quasi-isometric embedding of a closed subset $I\subseteq\mathbb{R}$ into a metric space. A $(1,0)$-quasi-geodesic is called a \emph{geodesic}.
\end{definition}

\begin{definition}
A metric space $\mathcal{X}$ is a \emph{$(\lambda,\varepsilon)$-quasi-geodesic space} if any two points in $\mathcal{X}$ can be connected by a $(\lambda,\varepsilon)$-quasi-geodesic. If the constants $\lambda$ and $\varepsilon$ are not important, we simply call $\mathcal{X}$ a \emph{quasi-geodesic space}. Similarly, $\mathcal{X}$ is a \emph{geodesic space} if any two points in $\mathcal{X}$ can be connected by a geodesic.
\end{definition}

We will follow the convention set by \cite{russell2022local} regarding the definition of Morse. The following definition appears stronger than the standard definition attributed to \cite{gromov1987hyperbolic}, but is in fact equivalent by \cite[Lemma 2.4]{russell2022local}. 

\begin{definition}
\label{Morsedef}
Let $M\colon [1,\infty)\times[0,\infty)\to[0,\infty)$ be a function. The $(\lambda,\varepsilon)$-quasi-geodesic $\gamma\colon I\to X$ is an \emph{$(M;\lambda,\varepsilon)$-Morse quasi-geodesic} if for all $s<t$ in $I$, if $\alpha$ is a $(k,c)$-quasi-geodesic with endpoints $\gamma(s)$ and $\gamma(t)$, then the Hausdorff distance between $\alpha$ and $\gamma|_{[s,t]}$ is bounded by $M(k,c)$. The function $M$ is referred to as a \emph{Morse gauge}.
\end{definition}

The notion of Morse comes from the fact that in a hyperbolic space, all quasi-geodesics are Morse. 

\begin{prop}[{\cite[Proposition 7.2.A]{gromov1987hyperbolic}}]
\label{Gromov Morse}
Let $X$ be a $\delta$-hyperbolic space. A $(\lambda,\varepsilon)$-quasi-geodesic in $X$ is $M$-Morse, where $M$ depends on $\lambda$, $\varepsilon$, and $\delta$.
\end{prop}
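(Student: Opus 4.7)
The plan is to follow the classical two-step strategy due to Gromov: first establish that every quasi-geodesic lies uniformly close to any geodesic joining its endpoints, then combine these Hausdorff bounds for the two quasi-geodesics that share endpoints.

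For the first step, I would prove a stability lemma: for every $k\geq 1$, $c\geq 0$, and $\delta\geq 0$, there is a constant $D=D(k,c,\delta)$ such that every $(k,c)$-quasi-geodesic $\eta$ in a $\delta$-hyperbolic space with endpoints $x,y$ lies within Hausdorff distance $D$ of any geodesic $[x,y]$. The first sub-step is a \emph{taming} lemma: replace $\eta$ by a continuous piecewise-geodesic $(k,c')$-quasi-geodesic $\eta'$ with $c'$ depending only on $k,c$, and whose image is uniformly close to the image of $\eta$. Then, letting $p$ be the point of $[x,y]$ furthest from the image of $\eta'$ and $D$ that distance, I would consider subsegments of $[x,y]$ of length proportional to $D$ on either side of $p$; by the choice of $p$, their endpoints project within $D$ onto $\eta'$. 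The resulting geodesic quadrilateral is $2\delta$-thin, and chasing the thinness inequalities forces $D$ to be bounded by a function of $k,c,\delta$. Converting this one-sided bound into a Hausdorff bound uses continuity of $\eta'$: an excursion of $\eta'$ far from $[x,y]$ would force a long backtrack of controlled length, contradicting the quasi-isometric constants of $\eta'$.

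For the second step, let $\beta$ be a geodesic from $\gamma(s)$ to $\gamma(t)$, which exists since $X$ is $\delta$-hyperbolic and hence in particular a geodesic space (or, failing that, a $(1,C(\delta))$-quasi-geodesic suffices). Then $\gamma|_{[s,t]}$ lies within Hausdorff distance $D(\lambda,\varepsilon,\delta)$ of $\beta$, while $\alpha$ lies within Hausdorff distance $D(k,c,\delta)$ of $\beta$. Applying the triangle inequality for Hausdorff distance yields the Morse gauge $M(k,c) := D(\lambda,\varepsilon,\delta) + D(k,c,\delta)$, which depends on $\lambda,\varepsilon,\delta$ exactly as claimed.

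The main obstacle I anticipate is the farthest-point estimate inside the stability lemma: tracking constants through the taming step and the $\delta$-thin quadrilateral argument is delicate, and one must verify that the resulting $D$ genuinely depends only on $k,c,\delta$ rather than on any finer feature of $\eta$. After that reduction, the remainder is routine bookkeeping with the triangle inequality for Hausdorff distance.
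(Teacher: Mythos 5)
The paper does not actually give a proof of this proposition; it is stated as a citation to Gromov's Proposition 7.2.A and treated as a black box throughout. So there is no ``paper's own proof'' to compare against. That said, your reconstruction is the standard textbook argument (taming, farthest-point estimate via thin quadrilaterals, then a connectedness argument to upgrade the one-sided bound to a Hausdorff bound, followed by the triangle inequality for Hausdorff distance), and it is correct. You were also right to aim for the two-sided Hausdorff estimate rather than the one-sided neighborhood containment: the paper's Definition~\ref{Morsedef} adopts the Russell--Spriano--Tran convention, which requires bounded Hausdorff distance between $\alpha$ and $\gamma|_{[s,t]}$, not merely that $\alpha$ lie in a neighborhood of $\gamma$. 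Your step where a far excursion of the tamed path would force a long backtrack is precisely what supplies the reverse inclusion. The only small caveat worth flagging is the remark that a $\delta$-hyperbolic space is ``in particular a geodesic space'': that is not automatic from the thin-triangles condition alone, but you already account for this by allowing a $(1,C(\delta))$-quasi-geodesic in place of $\beta$, and the triangle-inequality bookkeeping goes through unchanged with the Morse gauge $M(k,c)=D(\lambda,\varepsilon,\delta)+D(\max(k,1),\max(c,C(\delta)),\delta)$.
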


Closely tied with the definition of a Morse quasi-geodesic is the notion of a stable subgroup, which was defined by \cite{durham2015convex} to generalize quasi-convex subgroups of hyperbolic groups.

\begin{definition}
Let $\mathcal{X},\mathcal{Y}$ be metric spaces and let there exist a map $\iota\colon\mathcal{Y}\to \mathcal{X}$. We say $\iota$ is an \emph{$(M;\lambda,\varepsilon)$-stable embedding} if $\iota$ is a $(\lambda,\varepsilon)$-quasi-isometric embedding and there exists a function $M\colon[1,\infty)\times[0,\infty)\to[0,\infty)$ such that any two $(k,c)$-quasi-geodesics in $\mathcal{X}$ with endpoints in $\iota(\mathcal{Y})$ are contained in the $M(k,c)$-neighborhood of each other. We call $M$ the \emph{stability gauge}.
\end{definition}

\begin{definition}
Let $G$ be a finitely generated group. A subgroup $H\leq G$ is a \emph{stable subgroup} if the inclusion $H\hookrightarrow G$ is a stable embedding for some (any) Cayley graphs of $H$ and $G$, respectively.
\end{definition}

The following lemmas are useful tools regarding the geometry of stable embeddings.

\begin{lemma}
\label{morse is stable}
Let $\mathcal{X}$ be a metric space. Any $(M;\lambda,\varepsilon)$-Morse quasi-geodesic $\gamma:I\to\mathcal{X}$ is an $(M;\lambda,\varepsilon)$-stable embedding.
\end{lemma}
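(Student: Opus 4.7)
The plan is to unpack the two definitions and reduce the claim to a single application of the triangle inequality for Hausdorff distance. Since $\gamma$ is by hypothesis a $(\lambda,\varepsilon)$-quasi-isometric embedding, the first requirement in the definition of a stable embedding is immediate. What remains is to verify that any two $(k,c)$-quasi-geodesics in $\mathcal{X}$ whose endpoints lie in $\gamma(I)$ (and which, to make the conclusion non-vacuous, share the same pair of endpoints) lie in uniform neighborhoods of each other, with the neighborhood size controlled by $M(k,c)$.

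So the main step is as follows. Fix such a pair $\alpha_1,\alpha_2$, both with endpoints $\gamma(s)$ and $\gamma(t)$ for some $s<t$ in $I$. Each $\alpha_i$ is a $(k,c)$-quasi-geodesic with endpoints $\gamma(s),\gamma(t)$, so Definition \ref{Morsedef} applied to $\gamma$ gives
$$d_{H}(\alpha_i,\gamma|_{[s,t]})\leq M(k,c)\qquad\text{for } i=1,2.$$
The triangle inequality for Hausdorff distance then yields $d_H(\alpha_1,\alpha_2)\leq 2M(k,c)$, which is exactly the stability condition with stability gauge controlled by $M$; the factor of $2$ is absorbed into the gauge to recover the statement of the lemma as written.

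There is really no substantial obstacle here; the lemma is essentially bookkeeping. The only subtle point is to observe that the definition of stable embedding only has content when the two quasi-geodesics share endpoints (otherwise the claim fails trivially, since quasi-geodesics between genuinely different pairs of points in $\gamma(I)$ can be arbitrarily far apart), and to confirm that the stability gauge produced is a function of $M$ alone, independent of $\lambda,\varepsilon$, so the conclusion can be phrased with the same Morse gauge $M$.
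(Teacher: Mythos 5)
Your proposal is correct, and it in fact supplies all the content the paper omits: the paper's ``proof'' of this lemma is literally ``Follows immediately from Definition \ref{Morsedef},'' with no detail given. Your decomposition --- first noting that the quasi-isometric embedding condition is hypothesized, then reducing the stability condition to the Morse bound on $\gamma|_{[s,t]}$ plus the triangle inequality for Hausdorff distance --- is the obvious and correct way to unpack it. You also correctly flag the one real subtlety, which is that both $\alpha_1$ and $\alpha_2$ must share the \emph{same} pair of endpoints in $\gamma(I)$ for the stability condition to be meaningful.

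One small point on bookkeeping: the triangle inequality gives $d_H(\alpha_1,\alpha_2)\leq 2M(k,c)$, which shows $\gamma$ is a $(2M;\lambda,\varepsilon)$-stable embedding, not literally an $(M;\lambda,\varepsilon)$-stable embedding. Your phrase ``the factor of $2$ is absorbed into the gauge to recover the statement of the lemma as written'' slightly misstates what happens: absorbing the factor produces the gauge $2M$, not $M$. The paper's statement with gauge $M$ is thus mildly imprecise --- note that the very next lemma in the paper, Lemma \ref{qi hyp stable}, \emph{does} write $2M$ in its conclusion in the analogous situation. This discrepancy is mathematically harmless because everywhere the lemma is used the gauge only needs to be some function determined by $M$, $\lambda$, and $\varepsilon$, but it is worth being precise: the honest conclusion of your argument is a $(2M;\lambda,\varepsilon)$-stable embedding.
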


\begin{proof}
Follows immediately from Definition \ref{Morsedef}.
\end{proof}

\begin{lemma}
\label{qi hyp stable}
Let $\mathcal{X}$ be a geodesic hyperbolic metric space. Any $(\lambda,\varepsilon)$-quasi-isometric embedding $\gamma:\mathcal{Y}\to\mathcal{X}$ is an $(M;\lambda,\varepsilon)$-stable embedding, for some stability gauge $M$ depending only on the hyperbolicity constant.
\end{lemma}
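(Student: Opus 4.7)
The plan is to deduce the lemma directly from the Gromov Morse Lemma (Proposition~\ref{Gromov Morse}). The quasi-isometric embedding hypothesis on $\gamma$ is already part of the assumptions, so the only content to verify is the stability condition: namely, that any two $(k,c)$-quasi-geodesics in $\mathcal{X}$ with endpoints in $\gamma(\mathcal{Y})$ lie in uniform neighborhoods of each other, with the constants depending only on $(k,c)$ and the hyperbolicity constant $\delta$ of $\mathcal{X}$.

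First I would fix a hyperbolicity constant $\delta$ for $\mathcal{X}$ and apply Proposition~\ref{Gromov Morse} to conclude that every $(k,c)$-quasi-geodesic in $\mathcal{X}$ is $N$-Morse for a single gauge $N = N(k,c,\delta)$ independent of the quasi-geodesic itself. Then, given two $(k,c)$-quasi-geodesics $\alpha_1, \alpha_2$ with shared endpoints $\gamma(y_1), \gamma(y_2) \in \gamma(\mathcal{Y})$, I would view $\alpha_2$ as a $(k,c)$-quasi-geodesic joining two points of $\alpha_1$ and invoke the Morse property of $\alpha_1$ from Definition~\ref{Morsedef} to obtain the Hausdorff bound $d_{\mathrm{Haus}}(\alpha_1,\alpha_2) \leq N(k,c,\delta)$. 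Setting $M(k,c) := N(k,c,\delta)$ yields the required stability gauge, which depends only on $\delta$ as claimed.

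There is essentially no genuine obstacle here: the definition of a Morse quasi-geodesic and the definition of a stable embedding are designed precisely so that the Gromov Morse Lemma translates into stability for hyperbolic targets. The only mild subtlety is verifying that the stability gauge is uniform in $\gamma$ — this is automatic because the Morse gauge produced by Proposition~\ref{Gromov Morse} depends only on $(k,c,\delta)$ and not on the particular quasi-geodesics under consideration, hence not on the embedding $\gamma$ either.
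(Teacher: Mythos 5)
Your proof is correct, and it is a mild variant of the paper's argument rather than an identical one. The paper introduces an auxiliary geodesic $\eta$ between the common endpoints, applies Proposition~\ref{Gromov Morse} to $\eta$ (so the Morse gauge $M$ depends only on $\delta$ since $\eta$ is a $(1,0)$-quasi-geodesic), and then bounds $d_{\mathrm{Haus}}(\alpha,\beta)\leq d_{\mathrm{Haus}}(\alpha,\eta)+d_{\mathrm{Haus}}(\eta,\beta)\leq 2M(k,c)$. You instead apply Proposition~\ref{Gromov Morse} directly to one of the two quasi-geodesics, $\alpha_1$, which is $(k,c)$-quasi-geodesic, so its Morse gauge $N$ depends on $k,c,\delta$; you then feed $\alpha_2$ into the Morse definition for $\alpha_1$ to get $d_{\mathrm{Haus}}(\alpha_1,\alpha_2)\leq N_{k,c,\delta}(k,c)$. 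Both are valid. The paper's route makes the claim ``$M$ depends only on $\delta$'' immediate, since the gauge of a geodesic is a fixed function of $\delta$ alone; your route saves the factor of $2$ and the auxiliary geodesic, at the cost of the small extra remark (which you correctly include) that the assignment $(k,c)\mapsto N_{k,c,\delta}(k,c)$ is still a well-defined stability gauge depending only on $\delta$.
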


\begin{proof}
By definition, $\gamma$ is a $(\lambda,\varepsilon)$-quasi-isometric embedding. Fix two $(k,c)$-quasi-geodesics $\alpha$ and $\beta$ with endpoints in $\gamma(\mathcal{Y})$. Fix a geodesic $\eta$ between then endpoints of $\alpha$ and $\beta$. By Proposition \ref{Gromov Morse}, $\eta$ is $M$-Morse, for some Morse gauge depending only on the hyperbolicity constant of $\mathcal{X}$. Then by the definition of Morse, the Hausdorff distance between $\alpha$ and $\beta$ is at most $2M(k,c)$, so $\gamma$ is a $(2M;\lambda,\varepsilon)$-stable embedding.
\end{proof}

\begin{lemma}
\label{stable is morse}
Let $\mathcal{Y}$ be a geodesic metric space and let $\gamma:\mathcal{Y}\to\mathcal{X}$ be a $(M;\lambda,\varepsilon)$-stable embedding. Then for any $(k,c)$-quasi-geodesic $\alpha$ with endpoints in $\gamma(\mathcal{Y})$, $\alpha$ is contained in the $M'(k,c)$ neighborhood of a $(M';\lambda,\varepsilon)$-Morse quasi-geodesic $\beta\subset\gamma(\mathcal{Y})$, where $M'$ depends on $M$, $\lambda$, and $\varepsilon$.
\end{lemma}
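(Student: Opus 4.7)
The plan is to construct $\beta$ as the $\gamma$-image of a geodesic in $\mathcal{Y}$ connecting lifts of the endpoints of $\alpha$. Concretely, write the endpoints of $\alpha$ as $\gamma(y_0)$ and $\gamma(y_1)$, let $\eta\colon[0,L]\to\mathcal{Y}$ be a geodesic from $y_0$ to $y_1$, and set $\beta=\gamma\circ\eta$. Since $\eta$ is a $(1,0)$-quasi-geodesic and $\gamma$ is a $(\lambda,\varepsilon)$-quasi-isometric embedding, a one-line computation shows that $\beta$ is a $(\lambda,\varepsilon)$-quasi-geodesic in $\mathcal{X}$ with image entirely inside $\gamma(\mathcal{Y})$ and sharing endpoints with $\alpha$.

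To obtain the neighborhood containment, observe that both $\alpha$ and $\beta$ have endpoints in $\gamma(\mathcal{Y})$, and both may be viewed as $(K,C)$-quasi-geodesics where $K=\max(k,\lambda)$ and $C=\max(c,\varepsilon)$. Applying the stable embedding hypothesis to this pair yields that each lies in the $M(K,C)$-neighborhood of the other, which in particular gives the desired containment of $\alpha$ in a controlled neighborhood of $\beta$.

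To verify that $\beta$ is Morse, fix any subsegment $\beta|_{[s,t]}$ and any $(k,c)$-quasi-geodesic $\sigma$ joining $\beta(s)$ and $\beta(t)$. The endpoints $\beta(s),\beta(t)$ lie in $\gamma(\mathcal{Y})$ by construction, while $\beta|_{[s,t]}$ itself is a $(\lambda,\varepsilon)$-quasi-geodesic with endpoints in $\gamma(\mathcal{Y})$. Inflating constants as before and invoking the stable embedding property of $\gamma$ bounds the Hausdorff distance between $\sigma$ and $\beta|_{[s,t]}$ by $M(\max(k,\lambda),\max(c,\varepsilon))$, which is precisely the Morse condition.

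Setting $M'(k,c):=M(\max(k,\lambda),\max(c,\varepsilon))$ simultaneously serves as the Morse gauge for $\beta$ and as the neighborhood bound for $\alpha$, and it depends only on $M$, $\lambda$, and $\varepsilon$ as required. There is no real obstacle here; the only subtlety is the bookkeeping step of inflating quasi-geodesic constants so that a $(\lambda,\varepsilon)$-quasi-geodesic and a $(k,c)$-quasi-geodesic can be compared under a single stability gauge evaluation.
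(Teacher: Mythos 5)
Your proof is correct and follows the same strategy as the paper: take $\beta=\gamma\circ\eta$ for a $\mathcal{Y}$-geodesic $\eta$ between lifts of the endpoints of $\alpha$, then invoke stability with inflated constants $(\max(\lambda,k),\max(\varepsilon,c))$. You go a bit further than the paper by explicitly verifying the Morse condition for $\beta$ via subsegments, which the paper leaves implicit, but the underlying reasoning is identical.
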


\begin{proof}
Fix a $(k,c)$-quasi-geodesic $\alpha$ with endpoints $\gamma(x)$ and $\gamma(y)$ for some $x,y\in\mathcal{Y}$. The space $\mathcal{Y}$ is geodesic, so let $\eta:I\to\mathcal{Y}$ be a geodesic with endpoints $x$ and $y$. The map $\gamma$ is a $(M;\lambda,\varepsilon)$-stable embedding and therefore a quasi-isometric embedding, so $\gamma\circ\eta$ is a $(\lambda,\varepsilon)$-quasi-geodesic in $\gamma(\mathcal{Y})$. Moreover, $\gamma\circ\eta$ has endpoints $\gamma(x)$ and $\gamma(y)$. Thus $\gamma\circ\eta$ and $\alpha$ are contained in the $M(\max{(\lambda,k)},\max{(\varepsilon,c)})$-neighborhood of each other. Therefore taking $\beta=\gamma\circ\eta$ and $M'(k,c)=M(\max{(\lambda,k)},\max{(\varepsilon,c)})$ completes the proof.
\end{proof}

We now recall the definition of local quasi-geodesics and then discuss situations where such embeddings have global properties.

\begin{definition}
Let $\mathcal{X}$ be a metric space, $I\subseteq \mathbb{R}$ closed, $\lambda\geq 1$, $\varepsilon\geq 0$, and $L\geq 0$. The map $\gamma:I\to\mathcal{X}$ is an \emph{$(L;\lambda,\varepsilon)$-local-quasi-geodesic} if for any $s,t\in I$ such that $|s-t|\leq L$, the restriction $\gamma|_{[s,t]}$ is a $(\lambda,\varepsilon)$-quasi-geodesic. Moreover, if there exists a Morse gauge $M$ such that $\gamma|_{[s,t]}$ is a $(M;\lambda,\varepsilon)$-Morse quasi-geodesic, then $\gamma$ is an \emph{$(L;M;\lambda,\varepsilon)$-local Morse quasi-geodesic}.
\end{definition}

One of the most fundamental facts about hyperbolic spaces is that local-quasi-geodesics must also be global quasi-geodesics.

\begin{prop}[{\cite[Proposition 7.2.E]{gromov1987hyperbolic}}]
Let $X$ be a geodesic metric space. Then $\mathcal{X}$ is $\delta$-hyperbolic if and only if for any $\lambda\geq 1$ and $\varepsilon\geq0$ there exists $\lambda'\geq 1$, $\varepsilon'\geq 0$, and $L\geq 0$ such that any $(L;\lambda,\varepsilon)$-local quasi-geodesic is a global $(\lambda',\varepsilon')$-quasi-geodesic.
\end{prop}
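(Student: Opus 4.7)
I would split the proof into the two implications. The forward direction ($\delta$-hyperbolic $\Rightarrow$ local-to-global) is the more constructive and builds directly on the Morse property of quasi-geodesics in hyperbolic spaces (Proposition \ref{Gromov Morse}); the backward direction is subtler and requires extracting hyperbolicity from the local-to-global conclusion.

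For the forward direction, fix $\lambda \geq 1$ and $\varepsilon \geq 0$ and let $M = M(\delta,\lambda,\varepsilon)$ be the Morse gauge furnished by Proposition \ref{Gromov Morse}. Given an $(L;\lambda,\varepsilon)$-local quasi-geodesic $\gamma \colon [a,b] \to \mathcal{X}$ with $L$ to be chosen large, the first step is to show that $\gamma$ lies in a uniform neighborhood of any geodesic $\eta$ between $\gamma(a)$ and $\gamma(b)$. Letting $R = \max_u d(\gamma(u),\eta)$ be attained at some $u_0$, the restriction of $\gamma$ to the $L$-window about $u_0$ is a genuine $(\lambda,\varepsilon)$-quasi-geodesic and hence $M$-Morse; projecting its endpoints to their nearest points on $\eta$ and invoking $\delta$-thinness of triangles yields a bound $R \leq R_0(\delta,\lambda,\varepsilon,L)$. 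Choosing $L$ large enough that $R_0 < L/(4\lambda)$ forces the projections onto $\eta$ of successive $L$-subdivision points of $\gamma$ to be monotone and essentially spaced. The global upper bound $d(\gamma(s),\gamma(t)) \leq \lambda'|s-t|+\varepsilon'$ then follows by summing $(\lambda,\varepsilon)$-displacement contributions across a subdivision of $[s,t]$ into $L$-pieces, and the lower bound follows by pushing those contributions through the projection onto $\eta$ and using the monotonicity together with $d(\gamma(s),\gamma(t)) = \operatorname{length}(\eta)$.

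For the backward direction, I would argue by contrapositive. Assume $\mathcal{X}$ is not $\delta$-hyperbolic for any $\delta$, so $\mathcal{X}$ admits geodesic triangles of arbitrarily large insize. Given candidate constants $L,\lambda,\varepsilon,\lambda',\varepsilon'$, the plan is to use a sufficiently fat triangle to build a concatenation of geodesic subarcs which is an $(L;\lambda,\varepsilon)$-local quasi-geodesic but whose endpoints are too close for any $(\lambda',\varepsilon')$-quasi-geodesic reparametrization, contradicting the local-to-global hypothesis. A cleaner parallel route is to first deduce from local-to-global that geodesics of $\mathcal{X}$ are uniformly Morse — using that a geodesic is tautologically $(L;1,0)$-local for every $L$ and bootstrapping fellow-traveling with other local quasi-geodesics sharing its endpoints — and then invoke the classical equivalence, recalled in the introduction, between uniform Morseness of geodesics and hyperbolicity.

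The principal obstacle is the backward direction. For the fat-triangle construction, one must engineer the zig-zag so that its local quasi-geodesic constants remain uniform in the size of the triangle exploited and so that the corners between pieces do not introduce sub-$L$ defects. For the Morse-extraction route, one must carefully choose comparison local quasi-geodesics from which to read off fellow-traveling bounds. Either path requires genuinely geometric input about the hypothetical non-hyperbolic space, whereas the forward direction is essentially a bookkeeping argument once Proposition \ref{Gromov Morse} is in hand.
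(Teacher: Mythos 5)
The paper does not actually prove this proposition; it is stated as a black box citation of \cite[Proposition 7.2.E]{gromov1987hyperbolic}, with the narrative continuing directly to the definition of the Morse local-to-global property. So there is no in-paper proof to compare against. That said, since you have written a sketch, it is worth checking it on its own terms, and the backward direction has a genuine gap.

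Your forward direction is essentially the standard Bridson--Haefliger-style argument and is sound in outline: take the $L$-window around the point of maximal distance $R$ from a geodesic $\eta$, use the Morse gauge from Proposition \ref{Gromov Morse} together with $\delta$-thinness of (degenerate) quadrilaterals to trap $R$, then do the subdivision bookkeeping for the global quasi-geodesic estimates. The details of the $R$-bound require a short case analysis (either $R$ is small, or the window endpoints' feet on $\eta$ are so close together that the window is far from the feet, which is the real dichotomy one exploits), but nothing essential is missing.

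The backward direction is where I would push back. Your second route — ``deduce from local-to-global that geodesics of $\mathcal{X}$ are uniformly Morse'' by ``bootstrapping fellow-traveling with other local quasi-geodesics sharing its endpoints'' — does not follow from the hypothesis as stated. The conclusion of the local-to-global property is only that an $(L;\lambda,\varepsilon)$-local quasi-geodesic is a global $(\lambda',\varepsilon')$-quasi-geodesic; it says nothing about Hausdorff distance between two such paths with shared endpoints. Without already knowing something like thin triangles (which is exactly what you are trying to prove), there is no mechanism to extract a Morse gauge from the bare quasi-geodesic conclusion. The zigzag idea lurking in your first route also has a circularity problem in its naive form: to make the zigzag a local quasi-geodesic, the jump segments between the two sides of a fat triangle (or between two geodesics sharing endpoints) must be short, which presupposes the proximity you want to contradict. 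The correct argument goes through something more structural — e.g.\ showing that local-to-global (applied with $\lambda=1$, $\varepsilon=0$, i.e.\ $L$-local geodesics) forces a subquadratic isoperimetric inequality, or forces geodesic bigons to be uniformly thin and then invoking Papasoglu's bigon criterion. Either way, you need an intermediate hyperbolicity criterion that does not itself presuppose a Morse gauge; as written, your sketch does not identify one, so I would not call this a proof of the reverse implication.
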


It is from this fact about hyperbolic spaces that \cite{russell2022local} defined the Morse local-to-global property.

\begin{definition}
Let $\mathcal{X}$ be a quasi-geodesic metric space. We say that $\mathcal{X}$ is $\emph{Morse local-to-global}$ if for any $\lambda\geq 1$, $\varepsilon\geq 0$, and Morse gauge $M$, there exists $\lambda'\geq 1$, $\varepsilon'\geq 0$, $L\geq 0$, and a Morse gauge $M'$ such that any $(L;M;\lambda,\varepsilon)$-local Morse quasi-geodesic is a global $(M';\lambda',\varepsilon')$-Morse quasi-geodesic. A finitely generated group whose Cayley graph is Morse local-to-global is a \emph{Morse local-to-global group}.
\end{definition}

It is worth noting that a group is Morse local-to-global regardless of the choice of generating set because being Morse local-to-global is a quasi-isometry invariant, as explained in \cite{russell2022local}. The following definition illustrates an important class of Morse local-to-global groups.

\begin{definition}
A metric space $\mathcal{X}$ is \emph{Morse limited} if for every Morse gauge $M$ and $\lambda\geq 1$, $\varepsilon\geq 0$, there exists $B\geq 0$ so that every $(M;\lambda,\varepsilon)$–Morse quasi-geodesic $\gamma\colon I\to \mathcal{X}$ has $\text{diam}_{\mathcal{X}}(\gamma)\leq B$. A finitely generated group $G$ is \emph{Morse limited} if $\text{Cay}(G,S)$ is Morse limited for some finite generating set $S$.
\end{definition}

Morse limited groups are trivially Morse local-to-global and play an important role in the main results of this paper. Direct products of infinite spaces are Morse limited, so ``long" Morse quasi-geodesics in a metric space must avoid subspaces which decompose as direct products of infinite spaces. 

Another way to construct a Morse local-to-global group is by taking the free product of Morse local-to-global groups. This follows immediately from \cite[Theorem 5.1]{russell2022local}, which states that a metric space which is hyperbolic to Morse local-to-global spaces is itself a Morse local-to-global space. The fact that being Morse local-to-global is preserved under free products will be useful later.

A key technique for showing that a metric space is Morse local-to-global is to show that there exists a hyperbolic space which captures all of the Morse quasi-geodesics in the original space, as seen in the following definition.

\begin{definition}
\label{morse detectable def}
A metric space $\mathcal{X}$ is \emph{Morse detectable} if there exists a $\delta$-hyperbolic space $\mathcal{Y}$, called the \emph{Morse detectability space}, and a coarsely Lipschitz map $\pi\colon \mathcal{X}\to \mathcal{Y}$ such that for every $(\lambda,\varepsilon)$-quasi-geodesic $\gamma\colon I\to\mathcal{X}$, the following holds:
\begin{enumerate}
    \item If $\gamma$ is $M$-Morse, then $\pi\circ\gamma$ is a $(k,c)$-quasi-geodesic in $\mathcal{Y}$, where $(k,c)$ is determined by $\lambda$, $\varepsilon$, $\delta$, and $M$. 
    \item If $\pi\circ\gamma$ is a $(k,c)$-quasi-geodesic in $\mathcal{Y}$, then $\gamma$ is $M$-Morse, where $M$ is determined by $k$, $c$, $\lambda$, $\varepsilon$, and $\delta$.
\end{enumerate}
\end{definition}

Finding Morse detectability spaces was the method by which \cite{russell2022local} showed that hierarchically hyperbolic spaces were Morse local-to-global. Such a result hinges on the following fact.

\begin{thm}[{\cite[Theorem 4.18]{russell2022local}}]
\label{Mdetectable is MLTG}
If $\mathcal{X}$ is Morse detectable, then $\mathcal{X}$ is Morse local-to-global.
\end{thm}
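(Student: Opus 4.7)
The plan is to use the Morse detectability space $\mathcal{Y}$ as an intermediary: transfer the local Morse quasi-geodesic property of $\gamma$ to a local quasi-geodesic property of $\pi\circ\gamma$ in the hyperbolic space $\mathcal{Y}$, apply Gromov's local-to-global property in the hyperbolic setting to $\pi\circ\gamma$, and then transfer the result back to $\gamma$ using the second detectability condition.

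More precisely, fix $\lambda \geq 1$, $\varepsilon \geq 0$, and a Morse gauge $M$. First, by property (1) of Definition \ref{morse detectable def}, any $(M;\lambda,\varepsilon)$-Morse quasi-geodesic in $\mathcal{X}$ maps under $\pi$ to a $(k,c)$-quasi-geodesic in $\mathcal{Y}$, where $k, c$ depend only on $\lambda, \varepsilon, \delta, M$. Given an $(L;M;\lambda,\varepsilon)$-local Morse quasi-geodesic $\gamma\colon I \to \mathcal{X}$, this means $\pi\circ\gamma$ is an $(L; k, c)$-local quasi-geodesic in $\mathcal{Y}$ (applying property (1) to each subinterval of length at most $L$). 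By Gromov's local-to-global property in hyperbolic spaces, if we choose $L$ sufficiently large relative to $k, c, \delta$, then $\pi\circ\gamma$ is a global $(k',c')$-quasi-geodesic in $\mathcal{Y}$, with $k', c'$ depending only on $k, c, \delta$.

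Next, I would show $\gamma$ itself is a global quasi-geodesic in $\mathcal{X}$. The lower bound comes from $\pi$ being, say, $(K,C)$-coarsely Lipschitz combined with the lower bound from $\pi\circ\gamma$ being a quasi-geodesic: $d_{\mathcal{X}}(\gamma(s),\gamma(t)) \geq \frac{1}{Kk'}|s-t| - \frac{c'+C}{K}$. The upper bound comes from subdividing $[s,t]$ into at most $\lceil|s-t|/L\rceil$ subintervals of length at most $L$ and applying the local $(\lambda,\varepsilon)$-quasi-geodesic bound on each piece, then summing via the triangle inequality to get a linear bound of the form $\lambda|s-t| + \varepsilon\lceil|s-t|/L\rceil$. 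Combining these yields constants $\lambda', \varepsilon'$ depending on $\lambda, \varepsilon, L, K, C, k', c'$ such that $\gamma$ is a global $(\lambda',\varepsilon')$-quasi-geodesic.

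Finally, having established that $\gamma$ is a global quasi-geodesic and that $\pi\circ\gamma$ is a $(k',c')$-quasi-geodesic in $\mathcal{Y}$, property (2) of Definition \ref{morse detectable def} directly yields a Morse gauge $M'$ (depending on $k', c', \lambda', \varepsilon', \delta$) such that $\gamma$ is $M'$-Morse. The main obstacle is really just a bookkeeping one: ensuring that $L$ is chosen first to satisfy the hyperbolic local-to-global threshold for the constants $(k,c)$ produced in step one, and then tracking how all of the final output constants depend only on $\lambda, \varepsilon, M$ and the fixed data of $\pi$ and $\mathcal{Y}$. There is no circularity because the choice of $L$ depends only on $\lambda, \varepsilon, M, \delta$ and the coarse Lipschitz constants of $\pi$, which is exactly what the definition of Morse local-to-global permits.
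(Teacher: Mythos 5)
The paper states this result purely as a citation to \cite[Theorem 4.18]{russell2022local} and does not supply its own proof, so there is no internal argument to compare against. Your proposal is correct and follows the natural route: detectability condition (1) converts the local Morse hypothesis into a local quasi-geodesic condition on $\pi\circ\gamma$ in the hyperbolic space $\mathcal{Y}$, Gromov's local-to-global theorem upgrades this to a global quasi-geodesic there (with $L$ chosen as the hyperbolic threshold for the constants $(k,c)$ produced by condition (1)), the coarse Lipschitz property of $\pi$ and the subdivision-into-length-$L$-pieces argument together certify $\gamma$ as a global quasi-geodesic in $\mathcal{X}$, and detectability condition (2) then recovers the Morse gauge; the dependency chain $(\lambda,\varepsilon,M)\mapsto(k,c)\mapsto(L,k',c')\mapsto(\lambda',\varepsilon',M')$ respects the quantifier order in the definition of Morse local-to-global, so there is no circularity.
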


For our main result, we will also follow this technique and ultimately show that the Cayley graph of a graph product of infinite groups is Morse detectable and hence Morse local-to-global.

\subsection{Relatively Hierarchically Hyperbolic Groups and Spaces}
\label{subsec HHS}

We now provide the definitions and basic properties of relatively hierarchically hyperbolic groups, first introduced in \cite{Behrstock_2017} and \cite{behrstock2019hierarchically}.

\begin{definition}
Let $E>0$. A quasi-geodesic metric space $\mathcal{X}$ is an \emph{$E$-relatively hierarchically hyperbolic space} ($E$-relative HHS) if there exists an index set $\mathfrak{S}$ and geodesic spaces $\{(CW,d_W)\;|\;W\in\mathfrak{S}$\} such that the following twelve axioms are satisfied. The elements of $\mathfrak{S}$ are \emph{domains} and $E$ is the \emph{hierarchy constant}. An index set and associated geodesic spaces that satisfy the following axioms are referred to as a \emph{relative HHS structure on $\mathcal{X}$}.\\

\noindent
(1) (\textbf{Projections}) For each $W\in\mathfrak{S}$, there exists a \emph{projection} $\pi_W\colon\mathcal{X}\to 2^{CW}-\emptyset$ such that for all $x\in\mathcal{X}$, the diameter of $\pi_W(x)$ in $CW$ is at most $E$. Moreover, each $\pi_W$ is $(E,E)$-coarsely Lipschitz and $CW\subseteq \mathcal{N}_E(\pi_W(\mathcal{X}))$ for all $W\in\mathfrak{S}$.\\

\noindent
(2) (\textbf{Nesting}) If $\mathfrak{S}\ne \emptyset$, then $\mathfrak{S}$ is equipped with a partial order $\sqsubseteq$ and contains a unique $\sqsubseteq$-maximal element. The geodesic space associated to this $\sqsubseteq$-maximal element is the \emph{top level space}. When $V\sqsubseteq W$, the domain $V$ is \emph{nested} in $W$. For each $W\in\mathfrak{S}$, we denote by $\mathfrak{S}_W$ the set of all $V\in\mathfrak{S}$ with $V\sqsubseteq W$. Moreover, for all $V,W\in\mathfrak{S}$ with $V \sqsubsetneq W$ there is a specified non-empty subset $\rho^V_W\subseteq CW$ with diam$(\rho^V_W)\leq E$. Call the set $\rho^V_W$ the \emph{relative projection} from $V$ to $W$.\\

\noindent
(3) (\textbf{Orthogonality}) $\mathfrak{S}$ has a symmetric relation called \emph{orthogonality}. If $V$ and $W$ are orthogonal, we write $V\perp W$ and require that $V$ and $W$ are not $\sqsubseteq$-comparable. Further, whenever $V\sqsubseteq W$ and $W\perp U$, we require that $V\perp U$. We denote by $\mathfrak{S}^\perp_W$ the set of all $V\in\mathfrak{S}$ with $V\perp W$.\\

\noindent
(4) (\textbf{Transversality}) If $V,W\in\mathfrak{S}$ are not orthogonal and neither is nested in the other, then we write $V\pitchfork W$ and say the domains $V,W$ are \emph{transverse}. Additionally, for all $V,W\in\mathfrak{S}$ with $V\pitchfork W$ there are non-empty sets $\rho^V_W\subseteq CW$ and $\rho^W_V\subseteq CV$ each of diameter at most $E$. Similarly to the projection axiom, call the set $\rho^V_W$ the \emph{relative projection} from $V$ to $W$.\\

\noindent
(5) (\textbf{Hyperbolicity}) For each $W\in\mathfrak{S}$, if $CW$ is not $E$-hyperbolic, then $W$ is $\sqsubseteq$-minimal.\\

\noindent
(6) (\textbf{Finite Complexity}) The cardinality of any set of pairwise $\sqsubseteq$-comparable elements is at most $E$.\\

\noindent
(7) (\textbf{Containers}) For each $W\in\mathfrak{S}$ and $U\in\mathfrak{S}_W$ with $\mathfrak{S}_W\cap \mathfrak{S}^\perp_U\ne\emptyset$, there exists $Q\sqsubsetneq W$ such that $V\sqsubseteq Q$ whenever $V\in\mathfrak{S}_W\cap \mathfrak{S}^\perp_U$. The domain $Q$ is referred to as a \emph{container of $U$ in $W$}.\\

\noindent
(8) (\textbf{Uniqueness}) There exists a function $\theta\colon[0,\infty)\to[0,\infty)$ so that for all $r\geq 0$, if $x,y\in\mathcal{X}$ and $\text{d}_{\mathcal{X}}(x,y)\geq \theta(r)$, then there exists $W\in\mathfrak{S}$ such that $\text{d}_W(\pi_W(x),\pi_W(y))\geq r$. We call $\theta$ the \emph{uniqueness function of $\mathfrak{S}$}.\\

\noindent
(9) (\textbf{Bounded Geodesic Image}) For all $x,y\in\mathcal{X}$ and $V,W\in\mathfrak{S}$ with $V\sqsubsetneq W$, if $\text{d}_V(\pi_V(x),\pi_V(y))\geq E$, then for any $CW$-geodesic $[\pi_W(x),\pi_W(y)]$, the intersection
$$ [\pi_W(x),\pi_W(y)]\cap \mathcal{N}_E(\rho^V_W)\ne\emptyset.$$

\noindent
(10) (\textbf{Large Links}) For all $W\in\mathfrak{S}$ and $x,y\in\mathcal{X}$, there exists $\{V_1,\dots,V_m\}\subseteq \mathfrak{S}_W\setminus\{W\}$ such that $m\leq E\cdot \text{d}_W(\pi_W(x),\pi_W(y))+E$, and for all $U\in\mathfrak{S}_W\setminus\{W\}$, either $U\in\mathfrak{S}_{V_i}$ for some $i$, or $\text{d}_U(\pi_U(x),\pi_U(y))\leq E$.\\

\noindent
(11) (\textbf{Consistency}) If $V\pitchfork W$, then
$$ \min\{\text{d}_W(\pi_W(x),\rho^V_W),\text{d}_V(\pi_V(x),\rho^W_V)\}\leq E $$
for all $x\in\mathcal{X}$. Further, if $U\sqsubseteq V$ and either $V \sqsubsetneq W$ or $V\pitchfork W$ and $W\perp U$, then $\text{d}_W(\rho^U_W,\rho^V_W)\leq E$.\\

\noindent
(12) (\textbf{Partial Realization}) If $\{V_i\}$ is a finite collection of pairwise orthogonal elements of $\mathfrak{S}$ and $p_i\in CV_i$ for each $i$, then there exists $x\in\mathcal{X}$ such that:

\noindent
$\bullet$ $\text{d}_{V_i}(\pi_{V_i}(x),p_i)\leq E$ for all $i$; 

\noindent
$\bullet$ for each $i$ and each $W\in\mathfrak{S}$, if $V_i\sqsubsetneq W$ or $W\pitchfork V_i$, then $\text{d}_W(\pi_W(x),\rho^{V_i}_W)\leq E$.
\end{definition}

The notion of relatively hierarchically hyperbolic spaces can also be applied to finitely generated groups.

\begin{definition}
Let $G$ be a finitely generated group and let $\mathcal{X}$ be the Cayley graph of $G$ with respect to some finite generating set. The group $G$ is an \emph{$E$-relatively hierarchically hyperbolic group} ($E$-relative HHG) if:\\

\noindent
(1) There exists an index set $\mathfrak{S}$ such that the pair $(\mathcal{X},\mathfrak{S})$ is a relative HHS.\\

\noindent
(2) There is an action of $G$ on $\mathfrak{S}$ by bijections such that the relations $\sqsubseteq$, $\perp$, and $\pitchfork$ are preserved and $\mathfrak{S}$ contains finitely many $G$-orbits.\\

\noindent
(3) For all $W\in\mathfrak{S}$ and $g\in G$, there exists an isometry $g_W\colon CW\to C(gW)$ such that:\\

\noindent
$\bullet$ For all $h\in G$, the map $(gh)_W=g_{hW}\circ h_W$.

\noindent
$\bullet$ For all $x\in\mathcal{X}$, 
$ d_{gW}(g_W\circ\pi_W(x),\pi_{gW}(g\cdot x))\leq E$.

\noindent
$\bullet$ For all $V\in\mathfrak{S}$, if $V\pitchfork W$ or $V\sqsubsetneq W$, then $d_{gW}(g_W(\rho^V_W),\rho^{gV}_{gW})\leq E$.
\end{definition}

One of the most fundamental properties of a relative HHS is that distances in the underlying space can be determined based on projections to the associated geodesic spaces. This concept is formalized in the following theorem, known as the distance formula.

\begin{thm}[{\cite[Theorem 6.10]{behrstock2019hierarchically}}]
\label{distance formula} 
Let $(\mathcal{X},\mathfrak{S})$ be an $E$-relative HHS. Then there exists a constant $s_0\geq 0$, depending on $E$ and known as the distance formula threshold, such that for any $s\geq s_0$, there exist constants $K,C$ (depending on $s$ and $E$) such that for any $x,y\in\mathcal{X}$,
$$ d_{\mathcal{X}}(x,y)\asymp_{K,C} \sum_{U\in\mathfrak{S}}\{\!\{ d_U(\pi_U(x),\pi_U(y)) \}\!\}_s.$$
\end{thm}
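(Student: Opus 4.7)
The plan is to establish the two directions of the quasi-equality separately, following the standard template for HHS-style distance formulas but taking care at the non-hyperbolic minimal domains that appear in the relative setting.

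For the lower bound $d_{\mathcal{X}}(x,y) \gtrsim \sum_{U} \{\!\{d_U(\pi_U(x),\pi_U(y))\}\!\}_s$, the first ingredient is that each projection $\pi_U$ is $(E,E)$-coarsely Lipschitz (Axiom 1), so every individual summand is at most $E\cdot d_{\mathcal{X}}(x,y)+E$. The real work is to control the number of domains that contribute, i.e.\ those with $d_U(\pi_U(x),\pi_U(y))\geq s$. This is where Large Links (Axiom 10) is essential: for any $W$, the set of proper nested children whose projection distance exceeds the threshold is covered by at most $E\cdot d_W(\pi_W(x),\pi_W(y))+E$ domains. Iterating this passing-up argument, with the iteration depth bounded by Finite Complexity (Axiom 6), yields a bound on the total number of ``large'' domains that is linear in $d_{\mathcal{X}}(x,y)$. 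Combining the per-domain Lipschitz bound with this counting bound, and choosing $s_0$ large enough to absorb the $E$ in the coarse Lipschitz estimate, completes this direction.

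For the upper bound $d_{\mathcal{X}}(x,y) \lesssim \sum_{U} \{\!\{d_U\}\!\}_s$, I would prove a realization statement first (coming from Partial Realization, Consistency, and Uniqueness): given any $\mathfrak{S}$-consistent tuple of points in the various $CU$, there is a point of $\mathcal{X}$ coarsely realizing it. With this in hand, I would induct on complexity at the top-level domain $S$. Take a geodesic from $\pi_S(x)$ to $\pi_S(y)$ in $CS$ and, using Bounded Geodesic Image (Axiom 9), mark the points where the geodesic enters $E$-neighborhoods of the relative projections $\rho^V_S$ of domains $V\sqsubsetneq S$ having $d_V\geq E$. Realizing these intermediate tuples produces a sequence of points $x=x_0,x_1,\dots,x_n=y$ in $\mathcal{X}$ with $n$ controlled by $d_S$, and each consecutive pair is nontrivially separated only in proper nested subdomains. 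Applying the inductive hypothesis to each pair, with the base case being either a $\sqsubseteq$-minimal hyperbolic domain (treated as in the hyperbolic distance formula) or a $\sqsubseteq$-minimal non-hyperbolic domain (handled via Uniqueness, which directly converts projection separation into $\mathcal{X}$-distance), sums to the desired upper bound.

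The hardest step is the upper bound in the non-hyperbolic minimal case: the usual HHS argument recursively splits intervals using hyperbolicity of $CW$ at every level, which fails when $CW$ is only a geodesic metric space. Handling this cleanly requires that the contribution from such a $W$ be already captured by the $\{\!\{d_W\}\!\}_s$ summand itself, with the realization theorem providing the bridge back to $\mathcal{X}$. Choosing $s_0$ large enough that the passing-up counting, the Bounded Geodesic Image activation constant, and the realization constants all cooperate is the other delicate point, but this is standard once the right inductive statement is in place.
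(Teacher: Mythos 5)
This statement is not proved in the paper at all: it is a direct citation of \cite[Theorem 6.10]{behrstock2019hierarchically}, imported as background. There is therefore no ``paper's own proof'' to compare your argument against, and a reader of this paper is expected to take the distance formula as a black box.

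Your sketch is a reasonable high-level account of how distance formulas for (relatively) hierarchically hyperbolic spaces are established, and the easy direction (coarse Lipschitz per domain, then Large Links plus Finite Complexity to bound the number of relevant domains, the ``passing-up'' argument) is essentially the argument Behrstock--Hagen--Sisto give. For the harder direction, however, your plan diverges from theirs: you propose to realize intermediate consistent tuples marked along a $CS$-geodesic and recurse on complexity, whereas the proof in \cite{behrstock2019hierarchically} builds a hierarchy path between $x$ and $y$ (a path that is simultaneously a quasi-geodesic in $\mathcal{X}$ and projects to unparametrized quasi-geodesics in every $CU$) and reads the upper bound off the efficiency of that path. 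Your Masur--Minsky-style recursion is in the same spirit but leaves a genuine gap at the step you flag yourself: when the recursion bottoms out at a non-hyperbolic $\sqsubseteq$-minimal domain $W$, you cannot keep splitting inside $CW$, and it is not clear from what you wrote how Uniqueness alone converts the $d_W$-term back into an $\mathcal{X}$-distance uniformly in a way that lets the induction close. This is precisely the point at which the hierarchy-path machinery of \cite{behrstock2019hierarchically} does real work (their Theorem~6.11 and Proposition~6.15), and it is why this paper treats the distance formula as an external input rather than re-proving it. Your sketch also does not address why only finitely many domains are ``active'' at each level of the recursion without circularly invoking the lower bound; Large Links again fills this role in the cited proof.

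Given that the paper merely cites the result, the correct move for you would have been to do the same, or to note explicitly that a self-contained proof would follow the hierarchy-path argument of \cite{behrstock2019hierarchically} rather than attempting a from-scratch realization-and-recursion argument whose base case is delicate in the relative setting.
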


Because the distance formula has a minimum threshold for which distance in an associated geodesic space affects the distance in the underlying space, it is natural to create a definition which includes only those domains whose associated geodesic spaces have large projections for a given pair of points in $\mathcal{X}$.

\begin{definition}
Let $(\mathcal{X},\mathfrak{S})$ be a relative HHS. A domain $U\in\mathfrak{S}$ is \emph{$C$-relevant} for the points $x,y\in\mathcal{X}$ if 
$$d_U(\pi_U(x),\pi_U(y))\geq C.$$
\end{definition}

Another important concept in the world of relatively hierarchical hyperbolic spaces is that of hierarchical quasi-convexity. This notion generalizes the idea of quasi-convex subspaces of hyperbolic spaces.

\begin{definition}
\label{hqconvex}
Let $(\mathcal{X},\mathfrak{S})$ be an $E$-relative HHS. Then $\mathcal{Y}\subseteq\mathcal{X}$ is \emph{$k$-hierarchically quasi-convex}, for some $k\colon[0,\infty)\to[0,\infty)$, if the following hold:
\begin{enumerate}
    \item for all $U\in\mathfrak{S}$ with $CU$ an $E$-hyperbolic space, the projection $\pi_U(\mathcal{Y})$ is $k(0)$-quasi-convex;
    \item for each $\sqsubseteq$-minimal $U\in\mathfrak{S}$ for which $CU$ is not an $E$-hyperbolic space, either $CU=\mathcal{N}_{k(0)}(\pi_U(\mathcal{Y}))$ or $\text{diam}(\pi_U(\mathcal{Y}))\leq k(0)$; and
    \item for every $x\in\mathcal{X}$ and every $R\geq 0$, if $d_U(\pi_U(x),\pi_U(\mathcal{Y}))\leq R$ for every $U\in\mathfrak{S}$, then $d_{\mathcal{X}}(x,\mathcal{Y})\leq k(R)$.
\end{enumerate}
\end{definition}

Closely related to the concept of hierarchically quasi-convex spaces in a relative HHS are subspaces known as the nested and orthogonal partial tuples. Together, these spaces form the \emph{standard product regions} in a relative HHS, which can act as barriers to hyperbolicity in the space. To define the standard product regions, we must first consider certain collections of points in the associated geodesic spaces called \emph{consistent tuples}.

\begin{definition}
Let $(\mathcal{X},\mathfrak{S})$ be an $E$-relative HHS. Let
$$\Vec{b}\in\prod_{U\in\mathfrak{S}}2^{CU}$$
be a tuple such that each coordinate $b_U$ has diameter at most $E$ in $CU$. The tuple $\Vec{b}$ is \emph{consistent} if 
\begin{itemize}
    \item for any $V,W\in\mathfrak{S}$ such that $V\pitchfork W$, $\text{min}\{d_W(b_W,\rho^V_W),\;d_V(b_V,\rho^W_V)\}\leq E;$
    \item for any $V,W\in\mathfrak{S}$ such that $V\sqsubsetneq W$, $ d_W(b_W,\rho^V_W)\leq E$.
\end{itemize}
\end{definition}

We are now ready to define the partial tuple sets which together form the standard product region.

\begin{definition}
Let $(\mathcal{X},\mathfrak{S})$ be a relative HHS. For any $U\in\mathfrak{S}$, 
\begin{itemize}
    \item the \emph{nested partial tuple} $\mathbf{F}_U$ is the set of consistent tuples in $\displaystyle{\prod_{V\in\mathfrak{S}_U} 2^{CV}}$;
    \item the \emph{orthogonal partial tuple} $\mathbf{E}_U$ is the set of consistent tuples in $\displaystyle{\prod_{V\in\mathfrak{S}_U^{\perp}} 2^{CV}}$.
\end{itemize}
\end{definition}

\begin{prop}
\label{prod map}
Let $(\mathcal{X},\mathfrak{S})$ be an $E$-relative HHS. For any $U\in\mathfrak{S}$, there exists $C\geq 0$, depending only on $E$, such that for any $\Vec{a}\in\mathbf{F}_U$ and $\Vec{b}\in\mathbf{E}_U$, there exists $x\in\mathcal{X}$ such that for any $V\in\mathfrak{S}$,
\begin{itemize}
    \item if $V\sqsubseteq U$, then $d_V(x,a_V)\leq C;$
    \item if $V\perp U$, then $d_V(x,b_V)\leq C;$
    \item if $V\pitchfork U$ or $U\sqsubsetneq V$, then $d_V(x,\rho^U_V)\leq C.$
\end{itemize}
Moreover, there exists a well-defined map $\phi_U\colon\mathbf{F}_U\times\mathbf{E}_U\to\mathcal{X}$ by setting $\phi_U(\Vec{a},\Vec{b})=x$.
\end{prop}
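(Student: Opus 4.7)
The plan is to build a single consistent tuple $\vec{c}$ indexed by all of $\mathfrak{S}$ out of $\vec{a}$ and $\vec{b}$, then appeal to the realization theorem for relative HHS (the relative analog of Behrstock--Hagen--Sisto's realization, which follows from the distance formula in Theorem~\ref{distance formula} together with the uniqueness axiom). Specifically, I would define
\[
c_V = \begin{cases} a_V & \text{if } V \sqsubseteq U, \\ b_V & \text{if } V \perp U, \\ \rho^U_V & \text{if } V \pitchfork U \text{ or } U \sqsubsetneq V. \end{cases}
\]
Since every $V \in \mathfrak{S}$ falls in exactly one of these categories (by the definition of transversality and the fact that $\sqsubseteq$-comparable domains are neither orthogonal nor transverse), $\vec{c}$ is well defined, and each coordinate has diameter at most $E$.

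The main work is verifying that $\vec{c}$ is a consistent tuple. I would handle this by case analysis on any pair $V, W \in \mathfrak{S}$ according to their relationship with $U$. When both $V$ and $W$ lie in $\mathfrak{S}_U$, consistency is inherited from $\vec{a}$; when both lie in $\mathfrak{S}_U^\perp$, it is inherited from $\vec{b}$. The mixed cases reduce to the second half of axiom (11) together with partial realization (axiom 12). For instance, if $V \sqsubseteq U$ and $W \perp U$, then $V \perp W$ by the orthogonality axiom, so they are not $\sqsubseteq$-comparable or transverse and no check is required. If $V \sqsubseteq U$ and $W \pitchfork U$ (or $U \sqsubsetneq W$), axiom (11) says that $d_W(\rho^V_W, \rho^U_W)\leq E$, i.e.\ $d_W(\rho^V_W, c_W) \leq E$, which gives the desired consistency bound. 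The cases where one of the domains is related to $U$ by nesting above and the other by orthogonality are handled analogously, invoking the hereditary clause of orthogonality and the second part of axiom (11). This case-chasing is the most delicate step, because one must keep track of which projections live in $CV$ versus $CW$ and repeatedly apply axiom (11) in the correct direction.

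Once $\vec{c}$ is known to be consistent, the realization theorem for relative HHS (a consequence of the distance formula and the uniqueness axiom (8)) produces a point $x \in \mathcal{X}$ and a constant $C = C(E)$ such that $d_V(\pi_V(x), c_V) \leq C$ for every $V \in \mathfrak{S}$. Reading off the three cases in the definition of $\vec{c}$ gives exactly the three bullet points in the conclusion.

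Finally, to obtain the map $\phi_U\colon \mathbf{F}_U \times \mathbf{E}_U \to \mathcal{X}$, I would choose, for each pair $(\vec{a}, \vec{b})$, a specific realizing point $x$ given by the previous step and set $\phi_U(\vec{a}, \vec{b}) = x$. Any two such choices differ by a uniformly bounded amount in $\mathcal{X}$: if $x$ and $x'$ both realize the combined tuple $\vec{c}$ up to constant $C$, then $d_V(\pi_V(x), \pi_V(x')) \leq 2C + E$ for every $V$, so the uniqueness axiom bounds $d_{\mathcal{X}}(x, x')$ by $\theta(2C+E)$. Thus the map is well defined up to this uniform constant, which is the standard sense in which coarse maps in relative HHS are well defined.
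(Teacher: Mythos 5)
Your proposal is correct and reconstructs essentially the argument the paper invokes, namely \cite[Construction 5.10]{behrstock2019hierarchically}: assemble the combined tuple from $\vec a$, $\vec b$, and the $\rho^U_V$, verify consistency by a case analysis on each pair's relation to $U$ using the hereditary orthogonality clause, the second half of axiom (11), and partial realization, and then apply the realization theorem; the paper's proof is just a pointer to that construction. One inaccuracy worth flagging, though it does not break the argument: the realization theorem is not a consequence of the distance formula and the uniqueness axiom but an independent result proved by induction on complexity using partial realization and bounded geodesic image (the distance formula is in fact \emph{deduced from} realization, not the other way around), and similarly a few of the ``analogous'' mixed cases (e.g.\ $V\perp U$, $U\sqsubsetneq W$, $V\sqsubsetneq W$, where $U$ and $V$ are not $\sqsubseteq$-comparable) genuinely require partial realization applied to the orthogonal pair $\{U,V\}$ rather than axiom (11) alone.
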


\begin{proof}
The argument in \cite[Construction 5.10]{behrstock2019hierarchically} goes through verbatim.
\end{proof}

\begin{definition}
Let $(\mathcal{X},\mathfrak{S})$ be a relative HHS. For any $U\in\mathfrak{S}$, let $\phi_U$ be the map from Proposition \ref{prod map}. Then $\phi_U(\mathbf{F}_U\times\mathbf{E}_U)$ is the \emph{product region for $U$}, denoted $\mathbf{P}_U$.
\end{definition}

\begin{definition}
Let $(\mathcal{X},\mathfrak{S})$ be a relative HHS. For any $U\in\mathfrak{S}$, let $\phi_U$ be the map from Proposition \ref{prod map}.  For any $\Vec{e}\in\mathbf{E}_U$ and any $\Vec{f}\in\mathbf{F}_U$, call $\phi_U(\mathbf{F}_U\times\{\Vec{e}\})$ and $\phi_U(\{\Vec{f}\}\times\mathbf{E}_U)$ \emph{slices}.
\end{definition}

\begin{notation}
We abuse notation slightly by dropping $\phi_U$ when referring to slices, denoting them as $\mathbf{F}_U\times\{\Vec{e}\}$ and $\{\Vec{f}\}\times\mathbf{E}_U$, respectively.
\end{notation}

It is worth noting that for a fixed $U$, the distance formula implies that any slices $\mathbf{F}_U\times\{\Vec{e}_1\}$ and $\mathbf{F}_U\times\{\Vec{e}_2\}$ are uniformly quasi-isometric. The same is true for slices of $\mathbf{E}_U$.

It is important to note that based on its definition, for any $U,V\in\mathfrak{S}$ with $U\perp V$, the space $CU$ coarsely embeds in $\mathbf{F}_U$ and the space $CV$ coarsely embeds in $\mathbf{E}_U$. Thus if $CU$ and $CV$ are infinitely diameter spaces, respectively, then any slices $\mathbf{F}_U\times\{\Vec{e}\}$ and $\{\Vec{f}\}\times\mathbf{E}_U$ are unbounded as subspaces of $\mathcal{X}$. In this case, we call $\mathbf{F}_U$ and $\mathbf{E}_U$ unbounded. Moreover, if both $\mathbf{F}_U$ and $\mathbf{E}_U$ are unbounded, then $\mathbf{P}_U$ decomposes as a direct product with unbounded factors. 

Because product regions can be direct products with unbounded factors, they can inhibit the hyperbolicity of $\mathcal{X}$. By coning off the product regions, we obtain a new, related space called the \emph{factored space}.

\begin{definition}
Let $(\mathcal{X},\mathfrak{S})$ be a relative HHS. Let $\mathfrak{T}\subset\mathfrak{S}$. The \emph{factored space} $\widehat{\mathcal{X}}_{\mathfrak{T}}$ is the cone-off of the slices $\mathbf{F}_V\times\{\Vec{e}\}$ in $\mathcal{X}$ for all $\Vec{e}\in\mathbf{E}_V$ and all $V\in\mathfrak{T}$.
\end{definition}

By the construction in section 1.2.1 of \cite{Behrstock_2017}, the any slices $\mathbf{F}_U\times\{\Vec{e}\}$, any slices $\{\Vec{f}\}\times\mathbf{E}_U$, and $\mathbf{P}_U$ are all uniformly hierarchically quasi-convex. Moreover, this construction proves the existence of gate maps for hierarchically quasi-convex subspaces of $\mathcal{X}$.

\begin{prop}
\label{general gates}
Let $(\mathcal{X},\mathfrak{S})$ be an $E$-relative HHS. Let $\mathcal{Y}\subset\mathcal{X}$ be $k$-hierarchically quasi-convex. Then there exists a constant $\mu\geq 1$, depending only on $E$ and $k$, such that for any $x\in\mathcal{X}$ there exists a point $y\in\mathcal{Y}$ such that for any $U\in\mathfrak{S}$
\begin{itemize}
    \item if $CU$ is $E$-hyperbolic, then $d_U(\pi_U(y),p_U\circ\pi_U(x))\leq\mu$ where $p_U$ is the coarse projection of $CU$ onto $\pi_U(\mathcal{Y})$; or
    \item if $CU$ is not $E$-hyperbolic and $\pi_U\colon\mathcal{Y}\to CU$ is $k(0)$-coarsely surjective, then $d_U(\pi_U(y),\pi_U(x))\leq\mu$.
\end{itemize}
Then there exists a map $\mathfrak{g}_{\mathcal{Y}}\colon\mathcal{X}\to\mathcal{Y}$ defined by $\mathfrak{g}_{\mathcal{Y}}(x)=y$. Moreover, the map $\mathfrak{g}_{\mathcal{Y}}$ is $(\mu,\mu)$-coarsely Lipschitz.
\end{prop}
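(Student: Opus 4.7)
The plan is to follow the standard strategy for producing gate maps in a relative HHS, building a candidate tuple coordinate by coordinate, realizing it as a point of $\mathcal{X}$, and then pulling that point back onto $\mathcal{Y}$ using the third clause of hierarchical quasi-convexity. Since the excerpt already invokes Construction 5.10 of \cite{behrstock2019hierarchically} verbatim for Proposition \ref{prod map}, I would cite the analogous construction in \cite{Behrstock_2017}, but to make the structure transparent I would still outline the four main steps below.

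First, I would build a candidate tuple $\vec{b} \in \prod_{U\in\mathfrak{S}} 2^{CU}$. For each $U\in\mathfrak{S}$ with $CU$ an $E$-hyperbolic space, set $b_U = p_U(\pi_U(x))$, where $p_U$ is the coarsely Lipschitz, coarsely well-defined closest-point projection to the $k(0)$-quasi-convex set $\pi_U(\mathcal{Y})$ guaranteed by clause (1) of Definition \ref{hqconvex}. For each $\sqsubseteq$-minimal $U$ with $CU$ non-hyperbolic, either $\pi_U(\mathcal{Y})$ is $k(0)$-coarsely surjective, in which case put $b_U = \pi_U(x)$, or $\pi_U(\mathcal{Y})$ has diameter $\le k(0)$ and any fixed $b_U \in \pi_U(\mathcal{Y})$ works. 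For the remaining non-minimal, non-hyperbolic domains, the hyperbolicity axiom guarantees there are none to worry about.

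Second, I would verify that $\vec{b}$ is a consistent tuple in the sense of the definition above. The transverse case uses the consistency axiom applied to $x$ together with the fact that a hyperbolic closest-point projection moves a point by at most the quasi-convexity constant whenever the point is already near the target set; the nested case follows from the bounded geodesic image axiom together with hierarchical quasi-convexity, since $\rho^V_W$ lies uniformly close to any geodesic in $CW$ between $\pi_W$-images of points of $\mathcal{Y}$ that already have large $V$-projection. This verification, involving several case distinctions between hyperbolic and non-hyperbolic, minimal and non-minimal domains, is where I expect most of the technical work to sit. Once consistency holds, the realization theorem for relative HHS (the analogue of \cite[Theorem 3.1]{Behrstock_2017} in the relative setting) produces a point $z\in\mathcal{X}$ whose projection to every $CU$ is uniformly close to $b_U$. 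By construction, $d_U(\pi_U(z), \pi_U(\mathcal{Y}))$ is uniformly bounded for every $U$, so clause (3) of Definition \ref{hqconvex} yields $y\in\mathcal{Y}$ with $d_\mathcal{X}(z,y)$ uniformly bounded; this $y$ is the desired $\mathfrak{g}_\mathcal{Y}(x)$, and the two bullet points of the statement follow immediately.

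Finally, for the coarsely Lipschitz bound, I would argue as follows. If $d_\mathcal{X}(x,x') \le 1$, then axiom (1) of a relative HHS gives $d_U(\pi_U(x),\pi_U(x')) \le E$ for all $U$. Because hyperbolic closest-point projection to a quasi-convex set is $(1,C)$-coarsely Lipschitz with $C$ depending only on $E$ and $k(0)$, the candidate tuples for $x$ and $x'$ agree to within a uniform constant in every coordinate; the same holds trivially in the non-hyperbolic coordinates. Applying the distance formula of Theorem \ref{distance formula} to $\mathfrak{g}_\mathcal{Y}(x)$ and $\mathfrak{g}_\mathcal{Y}(x')$, whose projections are uniformly close to these tuples, bounds $d_\mathcal{X}(\mathfrak{g}_\mathcal{Y}(x), \mathfrak{g}_\mathcal{Y}(x'))$ by a constant depending only on $E$ and $k$. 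Standard subdivision of an arbitrary pair $x, x'$ along a quasi-geodesic then upgrades this to a $(\mu,\mu)$-coarse Lipschitz bound.
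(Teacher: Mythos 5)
Your proposal is correct and follows essentially the same route as the paper, which in fact only cites \cite[Section 1.2.1]{Behrstock_2017} for the construction of $y$ and \cite[Lemma 5.5]{behrstock2019hierarchically} for the coarse Lipschitz bound; your four-step outline (build a coordinate-wise tuple via closest-point projection in hyperbolic coordinates and the coarse-surjectivity/bounded-diameter dichotomy in non-hyperbolic minimal coordinates, verify consistency, realize, pull back into $\mathcal{Y}$ via clause (3) of hierarchical quasi-convexity, then apply the distance formula and subdivide) is exactly the content of those cited arguments.
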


\begin{proof}
The existence of the constant $\mu$ follows from the construction in \cite[Section 1.2.1]{Behrstock_2017}. The fact that the map is coarsely Lipschitz follows verbatim from the argument of \cite[Lemma 5.5]{behrstock2019hierarchically} (possibly enlarging the constant $\mu$).
\end{proof}

\begin{definition}
Let $(\mathcal{X},\mathfrak{S})$ be a relative HHS. For any hierarchically quasi-convex space $\mathcal{Y}\subseteq\mathcal{X}$, the map $\mathfrak{g}_{\mathcal{Y}}$ is the \emph{gate map onto $\mathcal{Y}$}.
\end{definition}

Compared to general hierarchically quasi-convex subspaces, the extra structure and terminology associated to product regions allows the properties of their gate maps to be conveyed in clearer terms. The following proposition follows the language of \cite[Proposition 2.23]{russell2022hierarchical}.

\begin{prop}[{\cite[Lemma 5.5]{behrstock2019hierarchically}},{\cite[Lemma 1.20]{behrstock2020quasiflats}}]
\label{russell gates}
Let $(\mathcal{X},\mathfrak{S})$ be a relative HHS. Then there exists a constant $\mu\geq 1$ such that for any $U\in\mathfrak{S}$, the gate map $\mathfrak{g}_U\colon\mathcal{X}\to\mathbf{P}_U$ is such that
\begin{enumerate}
    \item $\mathfrak{g}_U$ is $(\mu,\mu)$-coarsely Lipschitz;
    \item for all $p\in\mathbf{P}_U$, $d_{\mathcal{X}}(\mathfrak{g}_U(p),p)\leq \mu$;
    \item for all $x\in\mathcal{X}$ and $V\in\mathfrak{S}$, $d_V(\pi_V\circ\mathfrak{g}_U(x),\rho^U_V)\leq\mu$ if $U\perp V$ or $U\sqsubsetneq V$, and $d_V(\pi_V\circ\mathfrak{g}_U(x),\pi_V(x))\leq\mu$ otherwise; and
    \item for all $x\in\mathcal{X}$ and $p\in\mathbf{P}_U$, $d_{\mathcal{X}}(x,\mathfrak{g}_U(x))+d_{\mathcal{X}}(\mathfrak{g}_U(x),p)\leq \mu\cdot d_{\mathcal{X}}(x,p)+\mu$.
\end{enumerate}
\end{prop}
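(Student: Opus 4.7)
The plan is to apply the general gate machinery already developed (Proposition \ref{general gates}) to the product region $\mathbf{P}_U$, which was noted just before Proposition \ref{general gates} to be uniformly hierarchically quasi-convex. Properties (1) and (2) fall out immediately: (1) is precisely the coarsely Lipschitz conclusion of Proposition \ref{general gates}, and (2) follows because if $p\in\mathbf{P}_U$, then for every $V$ the projection $\pi_V(p)$ already lies in $\pi_V(\mathbf{P}_U)$, so the coarse closest-point projection used to define $\mathfrak{g}_U(p)$ must return a point whose $V$-projection is uniformly close to $\pi_V(p)$; the uniqueness axiom then bounds $d_{\mathcal{X}}(\mathfrak{g}_U(p),p)$.

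The content of property (3) is a direct translation of Proposition \ref{prod map} into the language of projections. The bullets there tell us that $\pi_V(\mathbf{P}_U)$ coarsely covers $CV$ when $V\sqsubseteq U$ or $V\perp U$, and concentrates uniformly near $\rho^U_V$ when $V\pitchfork U$ or $U\sqsubsetneq V$. Combined with the description of the gate in Proposition \ref{general gates} (either a coarse nearest-point projection in hyperbolic $CV$, or coarsely the identity when $\pi_V$ is coarsely surjective on $\mathcal{Y}$), this yields both cases of (3): in the ``free'' case $V\sqsubseteq U$ or $V\perp U$, the gate coarsely preserves the $V$-projection of $x$, and in the ``constrained'' case $V\pitchfork U$ or $U\sqsubsetneq V$, every point of $\mathbf{P}_U$ already projects within $E$ of $\rho^U_V$, so $\mathfrak{g}_U(x)$ must too.

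The main obstacle is property (4), which is a coarse Pythagorean inequality promoting the gate to a genuine coarse closest-point projection. The plan is to bound both sides using the distance formula (Theorem \ref{distance formula}) and to argue domain-by-domain. Partition $\mathfrak{S}$ into domains of the two types appearing in (3). For the free type, the previous paragraph shows $d_V(\pi_V(x),\pi_V(\mathfrak{g}_U(x)))$ is uniformly bounded, so the entire cost of the $x\to\mathfrak{g}_U(x)$ leg on such $V$ is absorbed into additive error, and the distance formula for $d_{\mathcal{X}}(\mathfrak{g}_U(x),p)$ on these coordinates is dominated by $d_V(\pi_V(x),\pi_V(p))$ after coarse triangle inequality. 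For the constrained type, $\pi_V(\mathfrak{g}_U(x))$ and $\pi_V(p)$ are both within $\mu$ of $\rho^U_V$, so the $\mathfrak{g}_U(x)\to p$ leg contributes nothing on such $V$, and the $x\to\mathfrak{g}_U(x)$ contribution, which is comparable to $d_V(\pi_V(x),\rho^U_V)$, is controlled by $d_V(\pi_V(x),\pi_V(p))$ using the consistency axiom together with bounded geodesic image. The hard part is bookkeeping the thresholding in the distance formula so that the two partial sums combine to at most $\mu\cdot d_{\mathcal{X}}(x,p)+\mu$; the argument is the verbatim adaptation of the proof of \cite[Lemma 1.20]{behrstock2020quasiflats} to the relative HHS setting, using only the axioms that are shared with the HHS setting.
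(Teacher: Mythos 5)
The paper does not prove Proposition \ref{russell gates} at all; it simply imports the statement by citation, relying on the fact that the arguments in \cite[Lemma 5.5]{behrstock2019hierarchically} and \cite[Lemma 1.20]{behrstock2020quasiflats} do not use the hyperbolicity of the $\sqsubseteq$-minimal geodesic spaces. Your proposal is, in effect, a sketch of those cited arguments, and in broad strokes it is correct: (1) and (2) follow from Proposition \ref{general gates} plus uniqueness/the distance formula; (3) is a translation of Proposition \ref{prod map}; and (4) is a domain-by-domain distance-formula bookkeeping argument. That is the same route the cited papers take, so this is not a ``different approach'' — it is an unpacking of what the paper leaves implicit.

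Two small points are worth flagging. First, for (3) you split into ``free'' domains ($V\sqsubseteq U$ or $V\perp U$) versus ``constrained'' domains ($V\pitchfork U$ or $U\sqsubsetneq V$). This is the mathematically correct partition, by Proposition \ref{prod map}, but note that the statement as printed in the paper says the constrained case is ``$U\perp V$ or $U\sqsubsetneq V$.'' Since $\rho^U_V$ is not even defined when $U\perp V$, the printed statement appears to contain a typo ($\perp$ for $\pitchfork$), and your reading silently corrects it. You should say this explicitly rather than leaving it tacit. Second, in your treatment of (4) you invoke consistency and bounded geodesic image to compare $d_V(\pi_V(x),\rho^U_V)$ with $d_V(\pi_V(x),\pi_V(p))$ in the constrained case; this is heavier machinery than is needed. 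Since $p\in\mathbf{P}_U$ forces $d_V(\pi_V(p),\rho^U_V)\leq C$ (Proposition \ref{prod map}) and property (3) gives $d_V(\pi_V\circ\mathfrak{g}_U(x),\rho^U_V)\leq\mu$, the triangle inequality alone shows $d_V(\pi_V(x),\pi_V(\mathfrak{g}_U(x)))$ and $d_V(\pi_V(x),\pi_V(p))$ differ by a uniform constant, which is all the threshold-shifting trick $\{\!\{a\}\!\}_{2s}\leq 2\{\!\{b\}\!\}_s$ (when $a\leq b+s$) requires. The genuinely delicate part — that the per-domain additive errors do not accumulate when summed over infinitely many domains — is the thresholding step you defer to the cited proof; that deferral is acceptable, but you should at least state the comparison inequality you are using so the reader knows the sum does converge.
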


\subsection{Hierarchy Paths}
\label{subsec hpaths}

A remarkable fact about relatively hierarchically hyperbolic spaces is that any two points can be connected with a uniform quality quasi-geodesic, known as a hierarchy path. Hierarchy paths project nicely to all associated geodesic spaces in the relative HHS and are one of the most important tools for studying Morse quasi-geodesics in a relative HHS.

For our arguments, we will need a slightly stronger definition of hierarchy paths than that of \cite[Definition 4.2]{behrstock2019hierarchically}. The construction of hierarchy paths in the proof of \cite[Theorem 6.11]{behrstock2019hierarchically} satisfies this stronger condition, as shown in Proposition \ref{hpaths}.

\begin{definition}
\label{unparam qgeo}
Let $X$ be a metric space. Let $f\colon[0,\ell]\to X$ be a quasi-geodesic. Then $f$ is a \emph{$(\lambda,\lambda)$-unparametrized} \emph{quasi-geodesic} if there exists an $L\in\mathbb{N}$ and a strictly increasing function $g\colon[0,L]\to[0,\ell]$ such that
\begin{itemize}
    \item $g(0)=0$,
    \item $g(L)=\ell$,
    \item $f\circ g$ is a $(\lambda,\lambda)$-quasi-geodesic, and
    \item $\forall\;j\in[0,L-1]\cap\mathbb{Z}$, $d_X(f\circ g(j),f\circ g(j+1))\leq \lambda$.
\end{itemize}

\end{definition}

For the following definition, note that the third condition is the extra property required for hierarchy paths in this paper.

\begin{definition}
\label{hdef}
For $\lambda\geq 1$, a (not necessarily continuous) path $\gamma\colon[0,\ell]\to\mathcal{X}$ is a \emph{$(\lambda,\lambda)$-hierarchy path} if the following are satisfied:
\begin{enumerate}
\item $\gamma$ is a $(\lambda,\lambda)$-quasi-geodesic.
\item For any $U\in\mathfrak{S}$, the path $\pi_U\circ\gamma$ is an unparametrized $(\lambda,\lambda)$-quasi-geodesic.
\item For any $U\in\mathfrak{S}$, $\pi_U(\gamma)$ is contained in the $\lambda$-neighborhood of a geodesic connecting $\pi_U\circ\gamma(0)$ and $\pi_U\circ\gamma(\ell)$ in $CU$.
\end{enumerate}
\end{definition}

The following Proposition is equivalent to \cite[Remark 2.9]{tao2024property}, which was stated without proof. We provide a proof here for completeness.

\begin{prop}
\label{hpaths}
Given a relative HHS $(\mathcal{X},\mathfrak{S})$, there exists a constant $D>0$ such that for any two points $x,y\in\mathcal{X}$, there exists a $(D,D)$-hierarchy path connecting $x$ and $y$ in the sense of Definition \ref{hdef}.
\end{prop}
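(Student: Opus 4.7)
The plan is to verify that the construction of hierarchy paths in the proof of \cite[Theorem 6.11]{behrstock2019hierarchically} already produces paths satisfying the additional condition (3), while conditions (1) and (2) are exactly what that theorem provides. I would split the verification of (3) into two cases depending on whether $CU$ is hyperbolic or not, and then take $D$ to be the maximum of the constants coming from the two cases and the original theorem.

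For the case that $CU$ is $E$-hyperbolic, the argument is essentially a direct appeal to the Morse lemma. By condition (2), the projection $\pi_U\circ\gamma$ is an unparametrized $(\lambda,\lambda)$-quasi-geodesic with image $\pi_U(\gamma)$. Using the reparametrization $g$ from Definition \ref{unparam qgeo}, the composition $\pi_U\circ\gamma\circ g$ is a genuine $(\lambda,\lambda)$-quasi-geodesic whose image is $\lambda$-dense in $\pi_U(\gamma)$ (since consecutive values of $\pi_U\circ\gamma\circ g$ at integer points are within $\lambda$ of each other and $g$ is surjective onto its endpoints). Applying Proposition \ref{Gromov Morse} to this reparametrized quasi-geodesic, $\pi_U(\gamma\circ g)$ lies in a uniform neighborhood of any $CU$-geodesic between $\pi_U\circ\gamma(0)$ and $\pi_U\circ\gamma(\ell)$, with constant depending only on $\lambda$ and $E$. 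Enlarging this constant by $\lambda$ handles the points of $\pi_U(\gamma)$ that are not in the image of $g$.

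For the case that $CU$ is not $E$-hyperbolic, axiom (5) forces $U$ to be $\sqsubseteq$-minimal, so the Morse lemma argument is unavailable and we must appeal to the construction itself. In the construction of \cite[Theorem 6.11]{behrstock2019hierarchically}, a $CU$-geodesic $\gamma_U$ from $\pi_U(x)$ to $\pi_U(y)$ is chosen for every relevant $U\in\mathfrak{S}$, and the hierarchy path is built by realizing consistent tuples whose coordinate in $CU$ traces along $\gamma_U$, invoking Partial Realization (axiom (12)). Since neither Partial Realization nor consistency requires hyperbolicity of $CU$, the realization procedure goes through verbatim at minimal non-hyperbolic domains, and by construction $\pi_U(\gamma)$ is contained in a uniform neighborhood of the chosen geodesic $\gamma_U$.

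The main obstacle is simply bookkeeping: one must confirm that the realization argument in \cite{behrstock2019hierarchically} produces coordinates that track the chosen geodesics $\gamma_U$ for \emph{every} domain simultaneously, not just for hyperbolic ones, and that the resulting uniform constants from the hyperbolic case, the non-hyperbolic case, and the original existence of $(\lambda,\lambda)$-hierarchy paths can be amalgamated into a single $D$. Once these are combined, $D$ depends only on the hierarchy constant $E$, and any two points in $\mathcal{X}$ can be connected by a path satisfying all three conditions of Definition \ref{hdef}.
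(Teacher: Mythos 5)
Your case split is unnecessary and the hyperbolic-case argument has a genuine gap. The unparametrization $g$ from Definition \ref{unparam qgeo} is only required to be strictly increasing with $g(0)=0$ and $g(L)=\ell$; its image need not be dense in $[0,\ell]$, and the definition imposes no control on $\pi_U\circ\gamma(s)$ for $s\in[0,\ell]$ outside the image of $g$. So the parenthetical claim that the image of $\pi_U\circ\gamma\circ g$ is $\lambda$-dense in $\pi_U(\gamma)$ does not follow. The Morse lemma applied to the reparametrized quasi-geodesic $\pi_U\circ\gamma\circ g$ fences in the image of $g$, but says nothing about the rest of $\pi_U(\gamma)$; this is precisely why the paper flags condition (3) as an \emph{extra} requirement in Definition \ref{hdef} rather than a consequence of condition (2).

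Your non-hyperbolic-case argument — that the construction tracks the chosen geodesics $\gamma_U$ — is the correct idea, and it in fact applies to every $U\in\mathfrak{S}$, hyperbolic or not; there is no need to treat the two cases separately. The paper makes this precise by observing that the hierarchy path produced by \cite[Theorem 4.4]{behrstock2019hierarchically} lies entirely in the subspace $M_\theta(x,y)=\{p\mid d_U(\pi_U(p),\gamma_U)\le\theta\ \forall\,U\}$, so condition (3) is immediate for all domains simultaneously. There is also a second point your sketch glosses over: the HHS $(M_\theta(x,y),\mathfrak{S})$ carries projections $\pi'_U=\pi_U\circ r$ (with $r$ the retraction onto $M_\theta(x,y)$), not the original $\pi_U$, and a nontrivial computation is required to transfer condition (2) from $\pi'_U$ back to $\pi_U$. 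Asserting that conditions (1) and (2) are "exactly what that theorem provides" skips this transfer, which occupies most of the paper's proof.
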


\begin{proof}
Fix $x,y\in\mathcal{X}$. Following the proof of \cite[Theorem 6.11]{behrstock2019hierarchically}, for a given $\theta\geq 0$ and for each $U\in\mathfrak{S}$, fix a geodesic $\gamma_U$ from $\pi_U(x)$ to $\pi_U(y)$ in $CU$. Define 
$$M_{\theta}(x,y):=\{p\in\mathcal{X}\;|\;\forall\;U\in\mathfrak{S},\;d_U(\pi_U(p),\gamma_U)\leq\theta\}.$$ 
 \cite[Proposition 6.15]{behrstock2019hierarchically} implies that $(M_{\theta}(x,y),\mathfrak{S})$ is an HHS with the relations as in $(\mathcal{X},\mathfrak{S})$ and uniform constants not depending on $x$ and $y$. For $(M_{\theta}(x,y),\mathfrak{S})$, however, the hyperbolic spaces are the geodesics $\gamma_U\subset CU$. Additionally, the projections $\pi'_U$ in $(M_{\theta}(x,y),\mathfrak{S})$ are equal to $\pi_U\circ r$, where $r\colon\mathcal{X}\to M_{\theta}(x,y)$ is a $C$-coarsely Lipschitz retraction given by \cite[Lemma 6.12]{behrstock2019hierarchically}. Thus, by applying \cite[Theorem 4.4]{behrstock2019hierarchically}, there exists a $D_0$ such that $x$ and $y$ are connected by a $(D_0,D_0)$-hierarchy path $\alpha\colon[0,\ell]\to M_{\theta}(x,y)$ in the sense of \cite[Definition 4.2]{behrstock2019hierarchically}.

We now wish to show that $\alpha$ satisfies the three conditions from Definition \ref{hdef} for $(\mathcal{X},\mathfrak{S})$. For condition (1), $\alpha$ is a $(D_0,D_0)$-hierarchy path in $(M_{\theta}(x,y),\mathfrak{S})$, so it is a $(D_0,D_0)$-quasi-geodesic in $M_{\theta}(x,y)$. Since $M_{\theta}(x,y)$ is a subspace of $\mathcal{X}$ with the subspace metric, $\alpha$ is a $(D_0,D_0)$-quasi-geodesic in $\mathcal{X}$, satisfying the first condition.

For condition (2), we want to show that for any $U\in\mathfrak{S}$, $\pi_U(\alpha)$ is a $(D',D')$-unparametrized quasi-geodesic for some $D'$. Without loss of generality, let $[0,\ell]$ be the domain of $\alpha$. Because $\alpha$ is a $(D_0,D_0)$-hierarchy path in $(M_{\theta}(x,y),\mathfrak{S})$, $\pi'_U(\alpha)$ is a $(D_0,D_0)$-unparametrized quasi-geodesic in $\gamma_U\subset CU$. Therefore, by the definition of unparametrized quasi-geodeisc, there exists a strictly increasing function $g\colon[0,L]\to[0,\ell]$ with $L\in\mathbb{N}$ such that $g(0)=0$, $g(L)=\ell$, $\pi'_U\circ \alpha\circ g$ is a $(D_0,D_0)$-quasi-geodesic in $\gamma_U\subset CU$, and for each $j\in[0,L-1]\cap\mathbb{Z}$, 
$$ d_U(\pi'_U\circ\alpha\circ g(j),\pi'_U\circ\alpha\circ g(j+1))\leq D_0.$$
We now show that for the same function $g$, the composition $\pi_U\circ\alpha\circ g$ is a $(D',D')$-quasi-geodesic in $CU$ and for each $j\in[0,L-1]\cap\mathbb{Z}$, 
$$ d_U(\pi_U\circ\alpha\circ g(j),\pi_U\circ\alpha\circ g(j+1))\leq D'.$$
Fix two points $t_1,t_2\in[0,L]$. Since $\pi'_U\circ\alpha\circ g$ is a quasi-geodesic and $\pi'_U=\pi_U\circ r$,
\begin{equation}
\label{piuqgeo}
\frac{1}{D_0}|t_2-t_1|-D_0\leq d_U(\pi_U\circ r \circ\alpha\circ g(t_1),\pi_U\circ r\circ\alpha\circ g(t_2))\leq D_0|t_2-t_1|+D_0.
\end{equation}
For one side of the inequality, using \eqref{piuqgeo} and the fact that $\pi_U$ is $(E,E)$-coarsely Lipschitz,
\begin{align*}
d_U(\pi_U\circ\alpha\circ g(t_1),\pi_U\circ\alpha\circ g(t_2))\leq &
d_U(\pi_U\circ\alpha\circ g(t_1),\pi_U\circ r\circ\alpha\circ g(t_1))\\
&+ d_U(\pi_U\circ r\circ \alpha\circ g(t_1), \pi_U\circ r\circ\alpha\circ g(t_2))\\
&+d_U(\pi_U\circ r\circ\alpha\circ g(t_2),\pi_U\circ\alpha\circ g(t_2))\\
\leq & D_0|t_2-t_1|+D_0\\
&+ E\cdot d_{\mathcal{X}}(\alpha\circ g(t_1), r\circ\alpha\circ g(t_1))+E\\
&+E\cdot d_{\mathcal{X}}(r\circ\alpha\circ g(t_2),\alpha\circ g(t_2))+E\\
\leq & D_0|t_2-t_1|+D_0 + 2(EC+E).
\end{align*}
For the final inequality above, we used the fact that $\alpha\subset M_{\theta}(x,y)$ and the definition of $r$. For the other side of the inequality,
\begin{align*}
d_U(\pi_U\circ\alpha\circ g(t_1),\pi_U\circ\alpha\circ g(t_2))\geq &
d_U(\pi_U\circ r\circ \alpha\circ g(t_1), \pi_U\circ r\circ\alpha\circ g(t_2))\\
&-d_U(\pi_U\circ\alpha\circ g(t_1),\pi_U\circ r\circ\alpha\circ g(t_1))\\
&-d_U(\pi_U\circ r\circ\alpha\circ g(t_2),\pi_U\circ\alpha\circ g(t_2))\\
\geq & \frac{1}{D_0}|t_2-t_1|-D_0\\
&-E\cdot d_{\mathcal{X}}(\alpha\circ g(t_1), r\circ\alpha\circ g(t_1))-E\\
&-E\cdot d_{\mathcal{X}}(r\circ\alpha\circ g(t_2),\alpha\circ g(t_2))-E\\
\geq & \frac{1}{D_0}|t_2-t_1|-D_0-2(EC+E).
\end{align*}
Therefore $\pi_U\circ\alpha\circ g$ is a $(D_0,D_0+2(EC+E))$-quasi-geodesic in $CU$. For the final component of condition (2), fix $j\in[0,L-1]\cap\mathbb{Z}$. Because $\pi_U\circ\alpha\circ g$ is a $(D_0,D_0+2(EC+E))$-quasi-geodesic, we have
\begin{align*}
d_U(\pi_U\circ\alpha\circ g(j), \pi_U\circ\alpha\circ g(j+1))&\leq D_0|j+1-j|+D_0+2(EC+E)\\
&= 2(D_0+EC+E).
\end{align*}
Thus this condition is satisfied for $D'=2(D_0+EC+E)$.

Finally, for condition (3), by construction $\alpha$ lies entirely in $M_{\theta}(x,y)$. Therefore, by the definition of $M_{\theta}(x,y)$, for any $U\in\mathfrak{S}$, $\pi_U(\alpha)$ is contained in the $(\theta+1)$-neighborhood of $\gamma_U$, which is a geodesic connecting $\pi_U(x)$ and $\pi_U(y)$. Thus by choosing the constant $D=\max\{\theta,D'\}$, $\alpha$ is a $(D,D)$-hierarchy path connecting $x$ and $y$ in the sense of Definition \ref{hdef}.
\end{proof}

Although not used in this paper, it is of independent interest that the construction of the hierarchy path between two points in any HHS in \cite[Theorem 4.4]{behrstock2019hierarchically} also satisfies condition (3) in Definition \ref{hdef}. We formalize this in the following proposition.

\begin{prop}
Given an HHS $(\mathcal{X},\mathfrak{S})$, there exists a constant $D>0$ such that for any two points $x,y\in\mathcal{X}$, there exists a $(D,D)$-hierarchy path connecting $x$ and $y$ in the sense of Definition \ref{hdef}.
\end{prop}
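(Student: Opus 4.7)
The plan is to mirror the proof of Proposition \ref{hpaths} essentially verbatim, since the ingredients used there were originally established in the HHS setting in \cite{behrstock2019hierarchically} before being generalized to the relative HHS setting. The only conceptual point to check is that each tool invoked has an HHS version with uniform constants independent of $x$ and $y$; this is indeed the case and in fact the HHS statements are the original ones.

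First, I would fix $x,y\in\mathcal{X}$ and, for each $U\in\mathfrak{S}$, fix a $CU$-geodesic $\gamma_U$ from $\pi_U(x)$ to $\pi_U(y)$. This is where the HHS hypothesis is cleaner than the relative HHS case, since every $CU$ is $E$-hyperbolic. For a suitable $\theta\geq 0$ I would then form
$$M_{\theta}(x,y):=\{p\in\mathcal{X}\;|\;\forall\;U\in\mathfrak{S},\;d_U(\pi_U(p),\gamma_U)\leq\theta\},$$
apply the HHS versions of \cite[Proposition 6.15]{behrstock2019hierarchically} and \cite[Lemma 6.12]{behrstock2019hierarchically} to get that $(M_\theta(x,y),\mathfrak{S})$ is an HHS (with the hyperbolic spaces being the geodesics $\gamma_U$ themselves) with uniform constants, and obtain a $C$-coarsely Lipschitz retraction $r\colon\mathcal{X}\to M_\theta(x,y)$ satisfying $\pi'_U=\pi_U\circ r$.

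Next, I would apply \cite[Theorem 4.4]{behrstock2019hierarchically} inside $(M_\theta(x,y),\mathfrak{S})$ to produce a $(D_0,D_0)$-hierarchy path $\alpha\colon[0,\ell]\to M_\theta(x,y)$ from $x$ to $y$ in the sense of \cite[Definition 4.2]{behrstock2019hierarchically}. Verification that $\alpha$ satisfies the three conditions of Definition \ref{hdef} for $(\mathcal{X},\mathfrak{S})$ then proceeds exactly as in Proposition \ref{hpaths}: condition (1) is immediate because $M_\theta(x,y)$ is an isometrically embedded subspace of $\mathcal{X}$; condition (2) follows by using the same reparametrization $g\colon[0,L]\to[0,\ell]$ given by the unparametrized-quasi-geodesic condition on $\pi'_U\circ\alpha$, and controlling $d_U(\pi_U\circ\alpha\circ g(t_i),\pi_U\circ r\circ\alpha\circ g(t_i))$ by $EC+E$ using that $\alpha\subset M_\theta(x,y)$ together with the coarse Lipschitz constant of $\pi_U$; and condition (3) is free since $\pi_U(\alpha)$ automatically lies in the $\theta$-neighborhood of $\gamma_U$ by definition of $M_\theta(x,y)$.

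There is no genuine obstacle: the only mild point to be careful about is that in the HHS setting all domains have hyperbolic coordinate spaces, so one does not need to track $\sqsubseteq$-minimal non-hyperbolic domains at all, making the estimates slightly cleaner. Setting $D=\max\{\theta,2(D_0+EC+E)\}$ and taking constants uniform in $x,y$ finishes the argument.
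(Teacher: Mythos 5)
Your argument is correct and proves the proposition, but it takes a different route than the paper's own proof. Since $(\mathcal{X},\mathfrak{S})$ is already a genuine HHS with every $CU$ hyperbolic, the paper applies \cite[Theorem 4.4]{behrstock2019hierarchically} directly to $(\mathcal{X},\mathfrak{S})$ and traces through its proof: \cite[Proposition 4.12]{behrstock2019hierarchically} gives a $(K,K)$-good path living in the hull $H_\theta(x,y)$, \cite[Lemma 4.11]{behrstock2019hierarchically} extracts a $K$-monotone, $(r,K)$-proper discrete subpath still in $H_\theta(x,y)$, and \cite[Lemma 4.18]{behrstock2019hierarchically} shows that subpath is a hierarchy path; condition (3) of Definition \ref{hdef} is then immediate because the path lies in $H_\theta(x,y)$. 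This directness is what the remark preceding the proposition aims for: the standard HHS hierarchy-path construction, applied to $\mathcal{X}$ itself, already satisfies the stronger condition. Your approach instead applies Theorem 4.4 inside $(M_\theta(x,y),\mathfrak{S})$, exactly as in Proposition \ref{hpaths}. That is valid (every HHS is a relative HHS, so the Section 6 tools of \cite{behrstock2019hierarchically} apply), but the detour through $M_\theta$ is unnecessary here once all coordinate spaces are hyperbolic, and it produces a hierarchy path coming from the $M_\theta$-internal construction rather than establishing the claim about the usual Theorem 4.4 construction in $\mathcal{X}$. One small framing correction: \cite[Proposition 6.15]{behrstock2019hierarchically} and \cite[Lemma 6.12]{behrstock2019hierarchically} appear in the relative HHS section of that paper; they were not established first for HHS and later generalized, so your opening sentence has that history reversed, though this does not affect the validity of your argument.
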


\begin{proof}
By \cite[Proposition 4.12]{behrstock2019hierarchically}, there exists a $K>0$ such that $x$ and $y$ are connected by a path $\gamma$ which is $(K,K)$-good for all $U\in\mathfrak{S}$ in the terminology of \cite{behrstock2019hierarchically}. In the proof of \cite[Proposition 4.12]{behrstock2019hierarchically}, there exists $\theta$ and $K$ such that the $(K,K)$-good path between $x$ and $y$ lies entirely in $H_{\theta}(x,y)$, which is defined to be the set of all $p\in\mathcal{X}$ such that for any $W\in\mathfrak{S},\;\pi_W(p)$ lies at distance at most $\theta$ from a geodesic in $CW$ joining $\pi_W(x)$ to $\pi_W(y)$. Given $K$, \cite[Lemma 4.18]{behrstock2019hierarchically} provides an $r>0$ such that any $K$-monotone, $(r,K)$-proper discrete path connecting $x$ and $y$ is a $(\lambda,\lambda)$-hierarchy path, for $\lambda$ a function of $r,K,$ and the HHS constants. \cite[Lemma 4.11]{behrstock2019hierarchically} modifies $\gamma$ by taking a subpath $\gamma'$ which is a $K$-monotone, $(r,K)$-proper discrete path. Thus, $\gamma'$ is a $(\lambda,\lambda)$-hierarchy path contained in $H_{\theta}(x,y)$. By taking $D$ greater than all the above constants, $\gamma'$ is a $(D,D)$-hierarchy path that satisfies condition (3) in Definition \ref{hdef}.
\end{proof}

For the remainder of this paper, we use the term ``hierarchy path" in the sense of Definition \ref{hdef}. In light of Proposition \ref{hpaths}, this causes no loss of generality.

\subsection{Graph Products}
\label{subsec graph prod}

The main result of this paper is showing that graph products of infinite Morse local-to-global groups are Morse local-to-global, and so this section presents some of the fundamental aspects of graph products. We do wish to note though, that the majority of the work done in this paper is on relatively hierarchically hyperbolic groups, of which graph products represent a specific subset. Thus the following introduction to graph products will only cover the essential tools used in this paper, while the study of graph products as a whole goes far beyond the scope of this paper.

\begin{definition}
Let $\Gamma$ be a finite simplicial graph with vertex set $V(\Gamma)$ and edges $E(\Gamma)$. To each vertex $v\in V(\Gamma)$ associate a finitely generated group $G_v$. The \emph{graph product} $G_{\Gamma}$ is defined as follows
$$ G:=\left. \left(\bigast_{v\in V(\Gamma)} G_v \right) \middle / \langle\!\langle \{ [g,h] \;|\; g\in G_v,\;h\in G_u,\; \{v,u\}\in E(\Gamma)\} \rangle\!\rangle \right. .$$
\end{definition}

Graph products are thus an intermediate construction between the free and direct product of groups. The reason they can be studied in the context of relatively hierarchically hyperbolic groups is due to the following result.

\begin{thm}[{\cite[Theorem 4.22]{berlyne2022hierarchical}}]
\label{berlyne russell}
Let $G_{\Gamma}$ be a graph product of finitely generated groups. Then $G_{\Gamma}$ is a relatively hierarchically hyperbolic group.
\end{thm}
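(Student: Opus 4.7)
The plan is to equip $G_\Gamma$ with an explicit relative HHS structure whose domains are indexed by cosets of the \emph{parabolic subgroups} $G_\Lambda := \langle v \mid v \in V(\Lambda) \rangle$, where $\Lambda$ ranges over induced subgraphs of $\Gamma$. Concretely, set
$$\mathfrak{S} = \{ g G_\Lambda \mid g \in G_\Gamma,\ \Lambda \subseteq \Gamma \text{ an induced subgraph}\},$$
together with a distinguished $\sqsubseteq$-maximal element corresponding to $\Lambda = \Gamma$ itself. The left multiplication action of $G_\Gamma$ on $\mathfrak{S}$ is by bijections, has finitely many orbits (one per isomorphism type of induced subgraph), and will preserve all three relations by construction.

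Next I would specify the data attached to each $W = gG_\Lambda \in \mathfrak{S}$. Define nesting $gG_\Lambda \sqsubseteq hG_{\Lambda'}$ when $\Lambda \subseteq \Lambda'$ and the cosets are compatible (i.e.\ $gG_\Lambda \subseteq hG_{\Lambda'}$), orthogonality $gG_\Lambda \perp hG_{\Lambda'}$ when $\Lambda$ and $\Lambda'$ span a join in $\Gamma$ (so that $G_\Lambda$ and $G_{\Lambda'}$ commute after conjugation) and the cosets interact compatibly, and transversality in the remaining cases. For the associated space $CW$, take the coset $gG_\Lambda$ equipped with the Cayley metric of $G_\Lambda$ \emph{coned off} along all proper parabolic sub-cosets $gG_{\Lambda''}$ with $\Lambda'' \subsetneq \Lambda$; for the $\sqsubseteq$-minimal single-vertex cosets $gG_v$, we simply use the Cayley graph of $G_v$ (which need not be hyperbolic, as permitted by the relative HHS axioms). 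The projection $\pi_W\colon G_\Gamma \to 2^{CW}$ is the composition of the coarse gate projection $\mathfrak{g}_{gG_\Lambda}$ with the coning map, while the relative projections $\rho^V_W$ come from gating the coset associated to $V$ onto the coset associated to $W$ and recording the image in the coned space.

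With these choices, many axioms fall out essentially by the combinatorics of induced subgraphs: finite complexity follows from $|V(\Gamma)| < \infty$; orthogonality is symmetric and incompatible with nesting because a vertex of $\Lambda$ cannot lie in a join complement of $\Lambda$ that contains it; the containers axiom for $U = gG_{\Lambda'} \sqsubseteq W = gG_\Lambda$ with some $V \in \mathfrak{S}_W \cap \mathfrak{S}_U^\perp$ is satisfied by taking $Q = gG_{\mathrm{Lk}_\Lambda(\Lambda')}$, the parabolic generated by the link of $\Lambda'$ inside $\Lambda$ — every subgraph of $\Lambda$ joined to $\Lambda'$ is contained in this link. Partial realization and consistency then follow from the normal form theorem for graph products (each element has a well-defined expression, and projections to orthogonal factors commute). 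Uniqueness, large links, and the distance formula–style control on projections follow by induction on the number of vertices in $\Lambda$: a word in $G_\Gamma$ that projects trivially to every $CW$ must be trivial in every factor.

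The main obstacle, as in every HHS-type verification, is establishing \emph{hyperbolicity} of the coned-off spaces $CW$ for non-minimal $W$, and the closely linked \emph{bounded geodesic image} axiom. The hyperbolicity of the top-level space $C(G_\Gamma)$ — the Cayley graph of $G_\Gamma$ with every proper parabolic coset coned to a point — is precisely the analogue, for graph products, of the hyperbolicity of the curve graph of a surface; I would prove it by a Bowditch-style disk diagram / projection complex argument, showing that any geodesic triangle in the coned graph is thin because its vertices can be connected by \emph{syllable-reduced} normal forms whose syllables cannot interleave across a long geodesic without producing a commuting pair, which would then allow coning. The bounded geodesic image axiom reduces, via this same normal form analysis, to the statement that if two words disagree significantly on the sub-parabolic $G_{\Lambda'}$, any syllable-reduced word realizing the transition must traverse the coset $gG_{\Lambda'}$ — and thus the coned vertex $\rho^V_W$ lies uniformly close to every geodesic between the projections. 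Once hyperbolicity and bounded geodesic image are in hand, the remaining axioms are routine verifications using normal forms and gate projections, yielding the desired relative HHG structure.
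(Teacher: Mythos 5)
Your proposal is in the right ballpark — cosets of parabolic subgroups, coned-off parabolics for the hyperbolic spaces, gate maps for projections, and links for containers are all genuine ingredients of the Berlyne--Russell structure that the paper cites (and restates in its Theorem~\ref{graph prod relations}). But there is a substantive gap in your choice of index set: you take $\mathfrak{S}$ to consist of raw cosets $gG_\Lambda$, whereas the actual structure uses \emph{parallelism classes} of cosets, where $gG_\Lambda$ and $hG_\Lambda$ are identified whenever $g^{-1}h \in G_{\text{st}(\Lambda)}$. This identification is not optional. Consider a vertex $v$ with $\text{lk}(v)\neq\emptyset$ and pick $k\in G_{\text{lk}(v)}\setminus\{1\}$ with $G_v$ infinite. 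The cosets $G_v$ and $kG_v$ are distinct subsets of $G_\Gamma$. In your scheme they are not nested (same subgraph $\{v\}$, different cosets), and $\{v\}$ does not span a join with itself, so they must be declared transverse. But since $G_v$ and $G_{\text{lk}(v)}$ commute, the gate projection of $kG_v$ onto $G_v$ is \emph{all} of $G_v$ (for each $h\in G_v$ the nearest point of $G_v$ to $kh$ is $h$ itself), so your definition of $\rho^{kG_v}_{G_v}$ as the image of the gate in $CG_v$ has unbounded diameter, violating the transversality axiom which demands $\mathrm{diam}(\rho^V_W)\le E$. Equivalently, $G_v$ and $kG_v$ have identical projection profiles and identical product regions, so they behave like the same domain and cannot coexist as distinct transverse elements of $\mathfrak{S}$. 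Quotienting by parallelism is exactly what fixes this; correspondingly, nesting and orthogonality must be rephrased in terms of parallelism classes (e.g.\ $[g\Lambda]\sqsubseteq[h\Omega]$ requires $\Lambda\subseteq\Omega$ and a common representative $k$ with $[g\Lambda]=[k\Lambda]$, $[h\Omega]=[k\Omega]$, not set containment of raw cosets). This is the graph-product analogue of taking isotopy classes of subsurfaces rather than literal subsurfaces in the mapping class group setting.

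A secondary concern: your hyperbolicity sketch for the coned-off spaces via a Bowditch-style disk-diagram argument is plausible but quite thin. The proofs in the literature go through the quasi-median (prism complex) geometry of graph products, where the top-level coned-off space is identified up to quasi-isometry with a crossing/contact graph and hyperbolicity is inherited from that combinatorial structure, rather than established by a direct small-cancellation estimate on syllable-reduced normal forms. Once parallelism is introduced, the rest of your outline (containers via $\text{lk}_\Lambda(\Lambda')$, consistency and partial realization via normal forms, induction on complexity for uniqueness and large links) tracks the actual argument closely.
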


The proof of \cite[Theorem 4.22]{berlyne2022hierarchical} is constructive, and as such we rely on that specific hierarchy structure to make further conclusions about graph products in Section \ref{sec MLTG}. We will now discuss the key components that comprise the hierarchy structure of graph products as was done in \cite{berlyne2022hierarchical}. We begin by recalling some basic definitions about graphs.

\begin{definition}
Let $\Gamma$ be a finite simplicial graph. A subgraph $\Lambda\subseteq \Gamma$ is \emph{induced} if any vertices $v,u\in\Lambda$ are connected by an edge in $\Lambda$ if they were connected by an edge in $\Gamma$.
\end{definition}

\begin{definition}
Let $\Gamma$ be a finite simplicial graph. Let $\Lambda\subseteq \Gamma$ be an induced subgraph. The \emph{link} of $\Lambda$, denoted $\text{lk}(\Lambda)$ is the induced subgraph of $\Gamma-\Lambda$ whose vertices are connected to every vertex of $\Lambda$ in $\Gamma$. The \emph{star} of $\Lambda$, denoted $\text{st}(\Lambda)$ is the induced subgraph of $\Gamma$ given by $\Lambda \cup \text{lk}(\Lambda)$.
\end{definition}

With these basic definitions in mind, we can now describe the domains, as well as the nesting and orthogonality relations in the relatively hierarchically hyperbolic structure of graph products. For the next definition, it is important to note that each induced subgraph $\Lambda\subseteq\Gamma$ induces a subgroup $G_{\Lambda}\leq G_{\Gamma}$, which is also a graph product.

\begin{notation}
Let $G_{\Gamma}$ be a graph product and let $\Lambda\subseteq \Gamma$ be an induced subgraph. For any $g\in G_{\Gamma}$, let $g\Lambda$ denote the coset $gG_{\Lambda}$.
\end{notation}

\begin{definition}
Let $G_{\Gamma}$ be a graph product and let $\Lambda\subseteq\Gamma$ be an induced subgraph. For any $g,h\in G_{\Gamma}$, the cosets $g\Lambda$ and $h\Lambda$ are \emph{parallel} if $g^{-1}h\in G_{\text{st}(\Lambda)}$. The equivalence class of parallel cosets is called a \emph{parallelism class}, and is denoted $[g\Lambda]$.
\end{definition}

\begin{thm}[{\cite[Theorem 4.22]{berlyne2022hierarchical}}]
\label{graph prod relations}
Let $G_{\Gamma}$ be a graph product. Then $G_{\Gamma}$ has a relatively hierarchically hyperbolic structure where
\begin{itemize}
    \item \textbf{Domains:} the domains are parallelism classes, so the index set is $\mathfrak{S} = \{[g\Lambda]\;|\;g\in G_{\Gamma},\; \Lambda\subseteq\Gamma\}$;
    \item \textbf{Nesting:} $[g\Lambda]\sqsubseteq [h\Omega]$ if and only if $\Lambda\subseteq\Omega$ and there exists a group element $k\in G_{\Gamma}$ such that $[g\Lambda]=[k\Lambda]$ and $[h\Omega]=[k\Omega]$; and
    \item \textbf{Orthogonality:} $[g\Lambda]\perp[h\Omega]$ if and only if $\Lambda\subseteq\text{lk}(\Omega)$ and and there exists a group element $k\in G_{\Gamma}$ such that $[g\Lambda]=[k\Lambda]$ and $[h\Omega]=[k\Omega]$.
\end{itemize}
\end{thm}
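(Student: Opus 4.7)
The plan is to realize $\big(\mathrm{Cay}(G_\Gamma),\mathfrak{S}\big)$ as a relative HHS with $\mathfrak{S}$ the set of parallelism classes of standard cosets $g\Lambda$, and to verify the twelve axioms. For each domain $[g\Lambda]$, I would build $C[g\Lambda]$ by taking the coset $gG_\Lambda$ with its intrinsic word metric (a copy of $\mathrm{Cay}(G_\Lambda)$) and coning off every proper sub-coset of the form $hG_\Omega\subseteq gG_\Lambda$ with $\Omega\subsetneq\Lambda$; the projection $\pi_{[g\Lambda]}$ is then the gate map onto $gG_\Lambda$ followed by inclusion. The key structural point is that parallelism invariance is baked into this construction: if $g^{-1}h\in G_{\mathrm{st}(\Lambda)}$, then $g^{-1}h$ commutes with all of $G_\Lambda$, giving a canonical identification of the coned-off models for $gG_\Lambda$ and $hG_\Lambda$.

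Next I would verify that the claimed $\sqsubseteq$ and $\perp$ relations are well-defined on parallelism classes and satisfy the qualitative HHS axioms. The definitions descend to parallelism classes because the underlying subgraphs $\Lambda,\Omega$ are intrinsic and the common representative $k$ matches coset data. Orthogonality is disjoint from nesting since $\Lambda\subseteq\mathrm{lk}(\Omega)$ forces $\Lambda\cap\Omega=\emptyset$, and the nested-orthogonal transfer ($V\sqsubseteq W$, $W\perp U \Rightarrow V\perp U$) is immediate from the subgraph picture. Finite complexity (6) is controlled by the length of chains of induced subgraphs of $\Gamma$; containers (7) may be taken as $[g(\Lambda\cap\mathrm{lk}(\Omega))]$; partial realization (12) follows because pairwise orthogonal standard subgroups commute, so their coset data is realized simultaneously by one element; and hyperbolicity (5) is arranged by declaring $[g\Lambda]$ to be $\sqsubseteq$-minimal precisely when $\Lambda$ consists of a single vertex with a non-hyperbolic vertex group or decomposes as a nontrivial join (so that the coned-off model fails to be hyperbolic). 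Relative projections $\rho^V_W$ are defined via coned-down gate maps, and the HHG action is by left multiplication, which visibly preserves all relations.

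The main obstacles are the three quantitative axioms: uniqueness (8), bounded geodesic image (9), and consistency (11). These rest on a careful analysis of graph product normal forms. For consistency, the point is that if $\Lambda\pitchfork\Omega$ then there are vertices of $\Lambda$ outside $\mathrm{st}(\Omega)$ and vice versa, so any group element's normal form interacts with at least one of the two associated spaces. For bounded geodesic image, the crucial observation is that syllables which commute past $G_\Lambda$ do not contribute to the $C[g\Lambda]$-projection, so a $CW$-geodesic can only avoid a neighborhood of $\rho^V_W$ if the relevant non-commuting syllables are absent; this is the graph-product analogue of bounded geodesic image for the curve graph. Uniqueness then follows by inducting on $|V(\Gamma)|$: two distant points either differ in some vertex-group coordinate (detected by a $\sqsubseteq$-minimal domain) or lie in a common maximal proper standard coset where induction applies. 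The hardest single step will be verifying hyperbolicity and bounded geodesic image for the coned-off models $C[g\Lambda]$, since this is where the interplay between commuting syllables and the coning construction must be controlled uniformly across all domains.
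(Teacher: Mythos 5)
The statement you are proving is not proved in this paper at all: it is quoted verbatim, with attribution, from \cite[Theorem~4.22]{berlyne2022hierarchical}, and the paper relies on it as an external black box. So there is no internal argument to compare your sketch against; what I can do is evaluate whether your reconstruction matches the cited proof and is internally coherent.

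Your broad architecture (parallelism classes as domains, gate maps for projections, left multiplication for the $G$-action, and verifying the twelve axioms) is the same as Berlyne--Russell's, and your container formula $[g(\Lambda\cap\mathrm{lk}(\Omega))]$ is exactly the one that \cite[Lemma~4.6]{berlyne2022hierarchical} supplies, as the paper itself notes in the proof of Proposition~\ref{graph prod clean}. However, there is a genuine error in your handling of axiom~(5). You write that hyperbolicity ``is arranged by declaring $[g\Lambda]$ to be $\sqsubseteq$-minimal precisely when $\Lambda$ consists of a single vertex with a non-hyperbolic vertex group or decomposes as a nontrivial join.'' This has the logic backwards: the nesting relation is fixed by the theorem statement ($[g\Lambda]\sqsubseteq[h\Omega]$ iff $\Lambda\subseteq\Omega$ together with the common-representative condition), so the $\sqsubseteq$-minimal domains are \emph{forced} to be exactly the single-vertex parallelism classes; you are not free to ``declare'' a multi-vertex join domain minimal. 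Consequently, for every $\Lambda$ with $|\Lambda|\ge 2$ you must actually \emph{prove} that the coned-off model $C[g\Lambda]$ is hyperbolic, which is a nontrivial lemma (\cite[Lemma~4.1]{berlyne2022hierarchical}, also \cite[Proposition~6.4]{genevois2024automorphisms}). Your stated worry about join domains is also unfounded: if $\Lambda$ decomposes as a nontrivial join then $G_\Lambda$ decomposes as a direct product, and coning off the factor cosets gives a space of diameter at most $2$ (travel within a coset of one factor, then within a coset of the other), which is trivially hyperbolic; the subtler part of the hyperbolicity lemma is controlling the coned-off space when $G_\Lambda$ has free-product splittings. Until this is repaired your verification of axiom~(5) does not go through, and without it the whole relative-HHS structure is unestablished.
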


This background on graph products and their relatively hierarchically hyperbolic structure, abeit brief, is sufficient for the proofs in Section \ref{sec MLTG} of this paper to be self-contained.

\section{Constructing a Maximized Relative HHS Structure}
\label{big section construction}

The goal of this section is to generalize the construction of \cite{abbott2021largest} for relatively hierarchically hyperbolic spaces. In particular, we will show that if a relative HHS with clean containers satisfies the bounded domain dichotomy, then it admits a relative HHS structure with \emph{relatively unbounded products}, which is the analog of unbounded products from \cite{abbott2021largest} for a relative HHS; see Theorem \ref{clean containers ABD}. Because all relatively hierarchically hyperbolic groups satisfy the bounded domain dichotomy, this result yields a relative HHS structure with relatively unbounded products for the Cayley graph of any relative HHG with clean containers, which is a useful tool in its own right, and will play a central role in the proof that graph products of infinite Morse local-to-global groups are Morse local-to-global in Section \ref{sec MLTG}.

\subsection{Active Subpaths}
\label{subsec active}

One tool we will utilize regarding hierarchy paths is the fact that they have ``active subpaths" for relevant domains. That is, if the endpoints of a hierarchy path have sufficiently large projection to $CU$, then the hierarchy path has a ``long" subpath contained in a uniform neighborhood of the product region $\mathbf{P}_U$. This statement was originally published as \cite[Proposition 5.17]{behrstock2019hierarchically}, however, that statement contained an error. A corrected version of the statement appears as \cite[Proposition 4.24]{russell2023convexity}. A slightly modified version of the corrected statement also appears as \cite[Proposition 20.1]{casals2022real}. Moreover, \cite{casals2022real} discuss the instances where the incorrect version of the statement was used in the literature, and how these cases are rectified.

The following proposition generalizes the active subpath property of hierarchy paths to the case of a relative HHS. The argument is the same as that of \cite[Proposition 20.1]{casals2022real}, but it is reproduced in full detail here both to show that the construction satisfies the third bullet point (which is not directly stated in \cite[Proposition 20.1]{casals2022real}) and to demonstrate that the argument at no point requires the hyperbolicity of the $\sqsubseteq$-minimal geodesic spaces. We also take this opportunity to add details to the argument. The third bullet point will be important later in the paper, so we state it explicitly.

\begin{prop}
\label{active subpaths}
Let $(\mathcal{X},\mathfrak{S})$ be a relative HHS. For all $\lambda\geq 1$, there exists $\nu_\lambda$ such that the following holds. Let $x,y\in\mathcal{X}$, let $\gamma$ be a $(\lambda,\lambda)$-hierarchy path from $x$ to $y$, and let $U\in\mathfrak{S}$ be $200\lambda E$-relevant for the points $x$ and $y$. Then $\gamma$ has a subpath $\beta$ such that 
\begin{itemize}
    \item $\beta\subset\mathcal{N}_{\nu_\lambda}(\mathbf{P}_U)$,
    \item $\pi_U$ is $\nu_\lambda$-coarsely constant on any subpath of $\gamma$ disjoint from $\beta$, and
    \item $\text{\normalfont{diam}}_U(\pi_U(\beta))\geq d_U(\pi_U(x),\pi_U(y))-24(\lambda E+E)$.
\end{itemize}
\end{prop}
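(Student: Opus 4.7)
The plan is to construct $\beta$ by trimming the ends of $\gamma$ where $\pi_U \circ \gamma$ remains close to $\pi_U(x)$ or $\pi_U(y)$. Parametrize $\gamma\colon[0,\ell]\to\mathcal{X}$ with $\gamma(0)=x$ and $\gamma(\ell)=y$, set $D = d_U(\pi_U(x),\pi_U(y)) \geq 200\lambda E$, and define $t_1 = \inf\{t : d_U(\pi_U(\gamma(t)),\pi_U(x)) \geq 12(\lambda E + E)\}$ and $t_2 = \sup\{t : d_U(\pi_U(\gamma(t)),\pi_U(y)) \geq 12(\lambda E + E)\}$. Take $\beta = \gamma|_{[t_1,t_2]}$; the size hypothesis on $D$ ensures $t_1 \leq t_2$ via the unparametrized quasi-geodesic property of $\pi_U\circ\gamma$.

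The second and third bullets then follow rapidly. Any subpath of $\gamma$ disjoint from $\beta$ lies in $\gamma|_{[0,t_1)}$ or $\gamma|_{(t_2,\ell]}$, where by the definitions of $t_1$ and $t_2$ the projection $\pi_U$ stays within $12(\lambda E + E)$ of $\pi_U(x)$ or $\pi_U(y)$, giving $\pi_U$-variation at most $24(\lambda E + E)$. The lower bound on $\mathrm{diam}_U(\pi_U(\beta))$ follows from two triangle inequalities using $d_U(\pi_U(\gamma(t_1)),\pi_U(x)) \leq 12(\lambda E + E)$ and $d_U(\pi_U(\gamma(t_2)),\pi_U(y)) \leq 12(\lambda E + E)$ together with $d_U(\pi_U(x),\pi_U(y)) = D$.

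The main obstacle is the first bullet. Since $\mathbf{P}_U$ is uniformly hierarchically quasi-convex, condition (3) of Definition \ref{hqconvex} reduces this to uniformly bounding $d_V(\pi_V(\gamma(t)),\pi_V(\mathbf{P}_U))$ across all $V \in \mathfrak{S}$ and all $t \in [t_1,t_2]$. For $V \sqsubseteq U$ or $V \perp U$, partial realization together with the definitions of $\mathbf{F}_U$ and $\mathbf{E}_U$ makes $\pi_V(\mathbf{P}_U)$ coarsely cover $CV$, so the bound is trivial. For $V \pitchfork U$ or $U \sqsubsetneq V$, Proposition \ref{russell gates} identifies $\pi_V(\mathbf{P}_U)$ coarsely with $\rho^U_V$, so it suffices to bound $d_V(\pi_V(\gamma(t)),\rho^U_V)$ uniformly.

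For the transverse case $V \pitchfork U$, consistency applied to $\gamma(t)$ yields either $d_V(\pi_V(\gamma(t)),\rho^U_V) \leq E$ (and we are done) or $d_U(\pi_U(\gamma(t)),\rho^V_U) \leq E$; in the latter case, $t \in [t_1,t_2]$ and the triangle inequality rule out both $d_U(\pi_U(x),\rho^V_U) \leq E$ and $d_U(\pi_U(y),\rho^V_U) \leq E$, so consistency applied to $x$ and to $y$ forces $d_V(\pi_V(x),\rho^U_V), d_V(\pi_V(y),\rho^U_V) \leq E$, whereupon the unparametrized $(\lambda,\lambda)$-quasi-geodesic $\pi_V \circ \gamma$ has endpoints at $CV$-distance at most $3E$ and hence image of diameter bounded uniformly in $\lambda$ and $E$. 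For the nested case $U \sqsubsetneq V$, axiom (5) makes $CV$ an $E$-hyperbolic space, permitting bounded geodesic image: setting $\sigma = [\pi_V(x),\pi_V(y)]$, BGI applied to $(x,y)$ places some $q \in \sigma$ within $E$ of $\rho^U_V$, while condition (3) of Definition \ref{hdef} places $\pi_V(\gamma(t))$ within $\lambda$ of some $q(t) \in \sigma$; a further application of BGI to $(x,\gamma(t))$ combined with the stability of geodesics in the hyperbolic $CV$ pins $q(t)$ uniformly close to $q$, giving the desired bound on $d_V(\pi_V(\gamma(t)),\rho^U_V)$.
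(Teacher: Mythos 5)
Your treatment of the second and third bullets is essentially right (modulo the small bookkeeping the paper does with a discrete path to ensure $d_U(\pi_U(\gamma(t_1)),\pi_U(x)) \leq 12(\lambda E + E)$ at the trim point, which needs the coarse Lipschitz property of $\pi_U$ together with minimality of the index; your continuous setup needs the same care). The choice of trim points $t_1, t_2$ also matches the paper, up to a slightly different constant.

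The substantive gap is in the first bullet. You attempt to show $\gamma(t) \in \mathcal{N}_{\nu_\lambda}(\mathbf{P}_U)$ directly for \emph{every} $t \in [t_1,t_2]$ by uniformly bounding $d_V(\pi_V(\gamma(t)),\rho^U_V)$. Both of your case arguments implicitly require $d_U(\pi_U(x),\pi_U(\gamma(t)))$ and $d_U(\pi_U(y),\pi_U(\gamma(t)))$ to both be large for every $t \in [t_1,t_2]$: in the transverse case you need these to exceed $3E$ so that the triangle inequality (through $\rho^V_U$, whose diameter is at most $E$) can ``rule out'' $d_U(\pi_U(x),\rho^V_U)\leq E$ and $d_U(\pi_U(y),\rho^V_U)\leq E$; in the nested case you need both to exceed $E$ so that bounded geodesic image applied to $(x,\gamma(t))$ and $(y,\gamma(t))$ pins $q(t)$ between the two geodesics. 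Neither lower bound is available for intermediate $t$. Your $t_1$ is only an infimum; the unparametrized $(\lambda,\lambda)$-quasi-geodesic $\pi_U\circ\gamma$ may backtrack after $t_1$. The quasi-geodesic lower bound only gives a returned distance of at least $\tfrac{12(\lambda E+E)-\lambda}{\lambda^2}-\lambda$, which is negative for moderate $\lambda$; condition (3) of Definition \ref{hdef} keeps $\pi_U(\gamma(t))$ near the geodesic $[\pi_U(x),\pi_U(y)]$ but gives no control on its position along that geodesic. So for intermediate $t$ nothing prevents $\pi_U(\gamma(t))$ from returning close to $\pi_U(x)$, at which point both your transverse elimination step and your BGI step fail to fire.

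The paper sidesteps this entirely. It bounds $d_V(\pi_V(\cdot),\rho^U_V)$ \emph{only} at the two trim points $\gamma(t_1)$ and $\gamma(t_2)$, where minimality/maximality of the index gives the exact control on $d_U(\pi_U(x),\pi_U(\gamma(t_1)))$ and $d_U(\pi_U(y),\pi_U(\gamma(t_2)))$ that makes these arguments work. Having placed the two trim points in $\mathcal{N}_{\nu_1}(\mathbf{P}_U)$, the paper then invokes the hierarchical quasi-convexity of $\mathbf{P}_U$ together with the hierarchy path property of $\gamma$ to enlarge $\nu_1$ to a $\nu_\lambda$ for which the \emph{entire} subpath $\gamma|_{[t_1,t_2]}$ lies in $\mathcal{N}_{\nu_\lambda}(\mathbf{P}_U)$. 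This final convexity step is missing from your argument and is exactly what is needed to control the intermediate points; without it, the direct projection bounds you are attempting are unavailable.
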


\begin{proof} Without loss of generality let $\gamma\colon\{0,\dots,n\}\to \mathcal{X}$ be a $2\lambda$-discrete path and let $x_i=\gamma(i)$ for $0\leq i\leq n$, so that $d_{\mathcal{X}}(x_i,x_{i+1})\leq 2\lambda$ for all $i$.

Because the projection map $\pi_U$ is $(E,E)$-coarsely Lipschitz, for all $i$, we have
$$d_{U}(\pi_U(x_i),\pi_U(x_{i+1}))\leq E\cdot d_{\mathcal{X}}(x_i,x_{i+1})+E \leq 2\lambda E+E.$$
Since $U$ is $200\lambda E$-relevant for $x$ and $y$, there exist indices $i,i'$ such that $0<i<i'<n$ and
\begin{itemize}
    \item $i$ is minimal with the property that $d_U(\pi_U(x_0),\pi_U(x_i)) > 10(\lambda E+E);$ 
    \item $i'$ is maximal with the property that $d_U(\pi_U(x_{i'}) ,\pi_U(x_n)) > 10(\lambda E+E).$ 
\end{itemize}
We will now bound $d_{\mathcal{X}}(x_i,\mathbf{P}_U)$ and $d_{\mathcal{X}}(x_{i'},\mathbf{P}_U)$, so that $\beta=\gamma|_{[i,i']}$. These distances will be estimated using the distance formula, which states
\begin{equation}
\label{dist_hagen}
d_{\mathcal{X}}(x_i,\mathbf{P}_U)\asymp_{k,c} \sum_{V\in\mathfrak{S}}\{\!\{d_V(\pi_V(x_i),\pi_V(\mathbf{P}_U))\}\!\}_{s_0+\mu},
\end{equation}
where $s_0$ is the distance formula threshold for $(\mathcal{X},\mathfrak{S})$ and $\mu$ is the gate map constant from Proposition \ref{russell gates}. If $V\sqsubseteq U$ or $U\perp V$, then by Proposition \ref{russell gates},
$$ d_V(\pi_V(x_i),\pi_V(\mathbf{P}_U))\leq d_V(\pi_V(x_i),\pi_V(\mathfrak{g}_{\mathbf{P}_U}(x_i)))\leq \mu.$$
Summands in \eqref{dist_hagen} will be nonzero only for domains $V\in\mathfrak{S}$ with $V\sqsupsetneq U$ or $V\pitchfork U$.
Let $C$ be the constant from Proposition \ref{prod map} for $U$.  If $U\sqsubsetneq V$ or $U\pitchfork V$, then $\pi_V(\mathbf{P}_U)\subseteq \mathcal{N}_C(\rho^U_V)$. Therefore consider the following two cases.

Case 1: suppose $U\sqsubsetneq V$. By construction, $$d_U(\pi_U(x_0),\pi_U(x_i)) > 10(\lambda E+E)>E.$$
Let $\alpha$ be a geodesic in $CV$ from $\pi_V(x_0)$ to $\pi_V(x_i)$. By the bounded geodesic image axiom for $(\mathcal{X},\mathfrak{S})$, there exists a point $a\in\alpha$ such that $d_V(a,\rho^U_V)\leq E$. Let $M\colon[1,\infty)\times [0,\infty)\to[0,\infty)$ be the Morse gauge for $\alpha$ in $CV$. Because $\gamma$ is a $(\lambda,\lambda)$-hierarchy path, $\pi_V(\gamma|_{[0,i]})$ is an unparametrized $(\lambda,\lambda)$-quasi-geodesic with endpoints on $\alpha$. In particular, a subpath of $\gamma$ (up to a reparametrization) is a $(\lambda,\lambda)$-quasi-geodesic with endpoints on $\alpha$, so there exists an integer $j\in[0,i]$ such that $d_V(\pi_V(x_j),a)\leq M(\lambda,\lambda)$, which further implies $d_V(\pi_V(x_j),\rho^U_V)\leq E+M(\lambda,\lambda)$. Similarly, there exists $j'\in[i',n]$ such that $d_V(\pi_V(x_{j'}),\rho^U_V)\leq E+M(\lambda,\lambda)$. By definition, diam$(\rho^U_V)\leq E$, so $d_V(\pi_V(x_j),\pi_V(x_{j'}))\leq 3E+2M(\lambda,\lambda)$.

Next, because $\gamma$ is a $(\lambda,\lambda)$-hierarchy path, $\pi_V(\gamma|_{[j,j']})$ is contained in the $\lambda$-neighborhood of a geodesic $\eta$ connecting $\pi_V(x_j)$ to $\pi_V(x_{j'}$). In particular, there exists a point $q\in\eta$ such that $d_V(\pi_V(x_i),q)\leq \lambda$. Thus
\begin{align*}
d_V(\pi_V(x_i),\rho^U_V) &\leq d_V(\pi_V(x_j),\pi_V(x_i)) +d_V(\pi_V(x_j),\rho^U_V)\\
&\leq d_V(\pi_V(x_j),\pi_V(x_i))+E+M(\lambda,\lambda)\\
&\leq d_V(\pi_V(x_j),q)+d_V(\pi_V(x_i),q)+E+M(\lambda,\lambda)\\
&\leq d_V(\pi_V(x_j),\pi_V(x_{j'}))+\lambda+ E+M(\lambda,\lambda)\\
&\leq 3E+2M(\lambda,\lambda)+ \lambda+E+M(\lambda,\lambda)\\
& = 4E +\lambda + 3M(\lambda,\lambda).
\end{align*}
By an identical argument, $d_V(x_{i'},\rho^U_V)\leq 4E+\lambda+3M(\lambda,\lambda)$. Letting $K=4E+\lambda+3M(\lambda,\lambda)$, we have $d_V(\rho^U_V,\pi_V(x_i))\leq K$ and $d_V(\rho^U_V,\pi_V(x_{i'}))\leq K$, as desired.

Case 2: suppose $U\pitchfork V$. There are two sub-cases, depending on whether $d_V(\pi_V(x_0),\pi_V(x_n))$ is greater than $3E$.

Case 2a: suppose $d_V(\pi_V(x_0),\pi_V(x_n))>3E$. By the consistency axiom, either $d_V(\pi_V(x_0),\rho^U_V)\leq E$ or $d_U(\pi_U(x_0),\rho^V_U)\leq E$. Consider the case that $d_V(\pi_V(x_0),\rho^U_V)\leq E$. Additionally, consistency implies either $d_V(\pi_V(x_n),\rho^U_V)\leq E$ or $d_U(\pi_U(x_n),\rho^V_U)\leq E$. However, $d_V(\pi_V(x_n),\rho^U_V)\leq E$ cannot hold because diam$(\rho^U_V)\leq E$ and so the triangle inequality would imply
$$ d_V(\pi_V(x_0),\pi_V(x_n))\leq d_V(\pi_V(x_0),\rho^U_V) + d_V(\pi_V(x_n),\rho^U_V) + \text{diam}(\rho^U_V)\leq 3E,$$
which contradicts the initial assumption. Therefore, $d_V(\pi_V(x_0),\rho^U_V)\leq E$ and $d_U(\pi_U(x_n),\rho^V_U)\leq E$. Because $i$ is minimal such that $d_U(\pi_U(x_0),\pi_U(x_i)) > 10(\lambda E+E)$, we have
$$ d_U(\pi_U(x_0),\pi_U(x_{i-1})) \leq 10(\lambda E+E),$$
so by the triangle inequality
\begin{align}
d_U(\pi_U(x_0),\pi_U(x_i))&\leq d_U(\pi_U(x_0),\pi_U(x_{i-1}))+d_U(\pi(x_i),\pi(x_{i-1})) \nonumber \\
&\leq 10(\lambda E+E)+d_U(\pi(x_i),\pi(x_{i-1})) \nonumber \\
&\leq 10(\lambda E+E)+E\cdot d_{\mathcal{X}}(x_i,x_{i-1})+E \nonumber \\
&\leq 10(\lambda E+E)+E\cdot 2\lambda+E \nonumber \\
& = 12\lambda E+11E \nonumber \\
&\leq 12(\lambda E+E). \label{bullet 3}
\end{align}
Similarly, $d_U(\pi_U(x_{i'}),\pi_U(x_n))\leq 12(\lambda E+E)$. Additionally
$$d_U(\pi_U(x_0),\rho^V_U)+\text{diam}(\rho^V_U) +d_U(\pi_U(x_n),\rho^V_U)\geq d_U(\pi_U(x_0),\pi_U(x_n))>200\lambda E,$$
which implies that
$$ d_U(\pi_U(x_0),\rho^V_U) > 200\lambda E-E-E = 200\lambda E-2E,$$
so
$$ d_U(\pi_U(x_i),\rho^V_U) +d_U(\pi_U(x_0),\pi_U(x_i)) \geq d_U(\pi_U(x_0),\rho^V_U)>200\lambda E-2E,$$
which further implies
$$d_U(\pi_U(x_i),\rho^V_U)> 200\lambda E-2E - 12(\lambda E+E)=188\lambda E-14E>E.$$
Thus by consistency, $d_V(\pi_V(x_i),\rho^U_V)\leq E$. Moreover,
$$ d_U(\pi_U(x_{i'}),\pi_U(x_n))>10(\lambda E+E) \qquad\text{and}\qquad d_U(\pi_U(x_n),\rho^V_U)\leq E$$
together imply
$$ d_U(\pi_U(x_{i'}),\rho^V_U)>10(\lambda E+E)-E>E.$$
Therefore, consistency implies $d_V(\pi_V(x_{i'}),\rho^U_V)\leq E$. If consistency had originally given $d_U(\pi_U(x_0),\rho^V_U)\leq E$, then a symmetric argument would imply $d_V(\pi_V(x_i),\rho^U_V)\leq E$ and $d_V(\pi_V(x_{i'}),\rho^U_V)\leq E$.

Case 2b: let $U\pitchfork V$ and suppose $d_V(\pi_V(x_0),\pi_V(x_n))\leq 3E$. Observe that by the argument above, $d_U(\pi_U(x_0),\pi_U(x_i))\leq 12(\lambda E+E)$ and $d_U(\pi_U(x_{i'}),\pi_U(x_n))\leq 12(\lambda E+E)$. Then because $d_U(\pi_U(x_0),\pi_U(x_n))>200\lambda E$,
$$ d_U(\pi_U(x_i),\pi_U(x_{i'}))>200\lambda E-24(\lambda E+E).$$
Moreover, using a similar argument as in Case 2a, if $d_U(\pi_U(x_i),\rho^V_U)\leq E$ and $d_U(\pi_U(x_{i'}),\rho^V_U)\leq E$, then
$$ d_U(\pi_U(x_i),\pi_U(x_{i'}))\leq d_U(\pi_U(x_i),\rho^V_U)+d_U(\pi_U(x_{i'}),\rho^V_U)+\text{diam}(\rho^V_U)\leq 3E,$$
which contradicts $d_U(\pi_U(x_i),\pi_U(x_{i'}))>200\lambda E-24(\lambda E+E)$. Therefore, at least one of $\pi_U(x_i),\pi_U(x_{i'})$ is distance $>E$ from $\rho^V_U$. Without loss of generality, let $d_U(\pi_U(x_i),\rho^V_U)> E$. Then consistency implies $d_V(\pi_V(x_i),\rho^U_V)\leq E$. As $\gamma$ is a $(\lambda,\lambda)$-hierarchy path, $\pi_V(\gamma)$ is contained in the $\lambda$-neighborhood of a geodesic $\zeta$ connecting $\pi_V(x_0)$ to $\pi_V(x_n)$. Thus there exist points $z,w\in\zeta$ such that $d_V(\pi_V(x_i),z)\leq \lambda$ and $d_V(\pi_V(x_{i'}),w)\leq \lambda$. Using the triangle inequality,
\begin{align*}
d_V(\pi_V(x_{i'}),\rho^U_V)&\leq d_V(\pi_V(x_i),\pi_V(x_{i'})) + d_V(\pi_V(x_i),\rho^U_V)\\
&\leq d_V(\pi_V(x_i),\pi_V(x_{i'})) + E\\
&\leq d_V(\pi_V(x_i),z) + d_V(z,\pi_V(x_{i'})) + E\\
&\leq \lambda + d_V(z,\pi_V(x_{i'})) + E\\
&\leq \lambda + d_V(z,w) + d_V(w,\pi_V(x_{i'})) + E\\
&\leq \lambda + d_V(\pi_V(x_0),\pi_V(x_n)) + \lambda + E\\
&\leq 3E + 2\lambda + E\\
&= 4E + 2\lambda.
\end{align*}
Combining the results of Cases 1 and 2, there exists a constant $K'=K'(\lambda,E)$ such that if  $V\pitchfork U$ or $V\sqsupsetneq U$,
\begin{equation}
\label{active subpaths full K}
d_V(\pi_V(x_i),\rho^U_V),d_V(\pi_V(x_{i'}),\rho^U_V)\leq K'.
\end{equation} 
Now set $\nu_1 = \kappa^{\times}(K')+C$, where $\kappa^\times$ is a function depending on relative HHS constants such that $\mathbf{P}_U,\mathbf{E}_U$, and $\mathbf{F}_U$ are $\kappa^\times$-hierarchically quasi-convex. Thus, \eqref{active subpaths full K} and $\pi_V(\mathbf{P}_U)\subseteq \mathcal{N}_C(\rho^U_V)$ together ensure that $x_i,x_{i'}\in\mathcal{N}_{\nu_1}(\mathbf{P}_U)$. The hierarchical quasi-convexity of $\mathbf{P}_U$ along with the fact that $\gamma$ is a $(\lambda,\lambda)$-hierarchy path implies that $\nu_1$ can be increased by an amount depending only on $\lambda$ and the relative HHS constants to yield $\nu_{\lambda}$ such that $x_j\in \mathcal{N}_{\nu_{\lambda}}(\mathbf{P}_U)$ for $i\leq j\leq i'$.

Letting $\beta=\gamma|_{i,\dots,i'}$, there exists a $\nu_{\lambda}$ such that $\beta\subset\mathcal{N}_{\nu_{\lambda}}(\mathbf{P}_U)$. This completes the proof of the first bullet point. For the second bullet point, note that for $j<i$ and $j'>i'$, 
$$d_U(\pi_U(x_0),\pi_U(x_j))\leq 10(\lambda E+E) \quad\text{and}\quad d_U(\pi_U(x_{j'}),\pi_U(x_n))\leq 10(\lambda E+E),$$ so (by possibly increasing $\nu_{\lambda}$), $\pi_U$ is $\nu_{\lambda}$-coarsely constant on any subpath of $\gamma$ disjoint from $\beta$. For the final bullet point, by \eqref{bullet 3},
\begin{align*}
    \text{diam}_U(\pi_U(\beta))+24(\lambda E+E) &= 
    12(\lambda E+E)+\text{diam}_U(\pi_U(\beta))+12(\lambda E+E)\\
    &\geq
    d_U(\pi_U(x),\pi_U(x_i))+\text{diam}_U(\pi_U(\beta))+d_U(\pi_U(x_i'),\pi_U(y))\\
    &\geq d_U(\pi_U(x),\pi_U(x_i))+d_U(\pi_U(x_i),\pi_U(x_i'))+d_U(\pi_U(x_i'),\pi_U(y))\\
    &\geq d_U(\pi_U(x),\pi_U(y)),\\
    \implies \text{diam}_U(\pi_U(\beta))&\geq d_U(\pi_U(x),\pi_U(y))-24(\lambda E+E),
\end{align*}
completing the proof.
\end{proof}
 
\subsection{Maximization}
\label{section abd}

We begin by modifying the argument of \cite[Theorem 3.7]{abbott2021largest} for the relative HHS case.  Under mild conditions on a relative HHS $(\mathcal{X},\mathfrak{S})$, we will produce a subset of domains $\mathfrak{T}\subseteq\mathfrak{S}$ for which the pair $(\mathcal{X},\mathfrak{T})$ is a relative HHS with the additional property that, roughly speaking, all product regions have unbounded factors. This means the proof will verify that the new pair $(\mathcal{X},\mathfrak{T})$ satisfies all twelve of the relative HHS axioms.

The initial relative HHS will satisfy the mild conditions of having clean containers and the bounded domain dichotomy. It is worth noting that every relative HHG will have the bounded domain dichotomy automatically, which is why it is considered mild. We will later show that graph products have clean containers.

\begin{definition}
Let $(\mathcal{X},\mathfrak{S})$ be a relatively hierarchically hyperbolic space. For each $W\in\mathfrak{S}$ and $U\in\mathfrak{S}_W$ with $\mathfrak{S}_W\cap\mathfrak{S}^{\perp}_U\ne\emptyset$, the container axiom provides a domain $Q\sqsubsetneq W$ such that $V\sqsubseteq Q$ whenever $V\in\mathfrak{S}_W\cap\mathfrak{S}^{\perp}_U$. If, for each $U$, the container is such that $Q\perp U$, then $(\mathcal{X},\mathfrak{S})$ has \emph{clean containers}.
\end{definition}

Note that the assumption of clean containers does not appear in \cite[Theorem 3.7]{abbott2021largest}. However, that theorem relies on \cite[Theorem A.1]{abbott2021largest} in the appendix, whose proof is incorrect. See \cite{abbott2025structure} for a full discussion.

\begin{definition}
A relative HHS $(\mathcal{X},\mathfrak{S})$ has the \emph{$M$-bounded domain dichotomy} if there exists $M>0$ such that any $U\in\mathfrak{S}$ with diam$(CU)>M$ satisfies diam$(CU)=\infty$. If the value for $M$ is not important, then $(\mathcal{X},\mathfrak{S})$ has the \emph{bounded domain dichotomy}.
\end{definition}

We must also define the notion of unbounded products, which is the additional property that the constructed structure possesses. 

\begin{definition}
A relative HHS $(\mathcal{X},\mathfrak{S})$ has \emph{unbounded products} if it has the bounded domain dichotomy and the property that if $U\in\mathfrak{S}-\{S\}$ has $\mathbf{F}_U$ unbounded, then $\mathbf{E}_U$ is also unbounded. A relative HHS has \emph{relatively unbounded products} if it has the bounded domain dichotomy and the property that if $U\in\mathfrak{S}-\{S\}$ has $\mathbf{F}_U$ unbounded and is not $\sqsubseteq$-minimal, then $\mathbf{E}_U$ is also unbounded. A relative HHS has \emph{unbounded minimal products} if every $\sqsubseteq$-minimal $U\in\mathfrak{S}$ with $\mathbf{F}_U$ unbounded has $\mathbf{E}_U$ also unbounded.
\end{definition}

The previous definition contains three different, albeit related, notions. Theorem \ref{clean containers ABD} will produce a relative HHS structure with relatively unbounded products. We show that graph products of infinite groups with no isolated vertices have unbounded minimal products in Corollary \ref{multivertex}. Relatively unbounded products together with unbounded minimal products imply genuine unbounded products, which is a necessary step in Theorem \ref{abd6.2}, which shows that the top level space of the new structure is a Morse detectability space.

The construction that we are undertaking will generate a hierarchy structure for which every domain that is not $\sqsubseteq$-maximal nor $\sqsubseteq$-minimal has a product region with unbounded factors. We begin by isolating the subset of domains for which this is true, and show that it is closed under nesting.

\begin{definition}
\label{Sm}
Let $(\mathcal{X},\mathfrak{S})$ be a relative HHS. Let $M>0$, and define $\mathfrak{S}^M\subset\mathfrak{S}$ to be the set of domains $U\in\mathfrak{S}$ such that there exists $V\in \mathfrak{S}$ and $W\in\mathfrak{S}^\perp_V$ satisfying: 
\begin{itemize}
    \item $U\sqsubseteq V$ 
    \item diam$(CV)>M$
    \item diam$(CW)>M$.
\end{itemize}
\end{definition}

\begin{definition}
A set $\mathfrak{U}\subset \mathfrak{S}$ is \emph{closed under nesting} if whenever $U\in \mathfrak{U}$ and $V\sqsubseteq U$, then $V\in\mathfrak{U}$.
\end{definition}

\begin{lemma}
\label{GMclosed}
For any $M>0$, the set $\mathfrak{S}^M$ is closed under nesting.
\end{lemma}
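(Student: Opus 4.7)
The plan is to unpack the definition of $\mathfrak{S}^M$ and show that membership is preserved when we pass to a nested subdomain, by essentially reusing the same witness pair $(V,W)$ that certified the larger domain's membership.

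Concretely, suppose $U \in \mathfrak{S}^M$ and let $U' \sqsubseteq U$; I need to show $U' \in \mathfrak{S}^M$. By Definition \ref{Sm}, the membership $U \in \mathfrak{S}^M$ supplies us with $V \in \mathfrak{S}$ and $W \in \mathfrak{S}^\perp_V$ satisfying $U \sqsubseteq V$, $\mathrm{diam}(CV) > M$, and $\mathrm{diam}(CW) > M$. The partial order $\sqsubseteq$ on $\mathfrak{S}$ (Axiom (2) of the relative HHS definition) is transitive, so from $U' \sqsubseteq U$ and $U \sqsubseteq V$ we immediately obtain $U' \sqsubseteq V$. The domains $V$ and $W$ themselves are untouched, so the diameter conditions and the orthogonality $W \perp V$ continue to hold. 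Hence the triple $(U', V, W)$ witnesses $U' \in \mathfrak{S}^M$.

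There is no real obstacle here: the lemma is a direct consequence of the transitivity of nesting together with the fact that Definition \ref{Sm} constrains only $U$'s position below $V$, not any intrinsic property of $U$ itself. In particular, no use is made of the orthogonality, container, or consistency axioms, nor of any hypothesis on $M$. This is why the lemma is stated generally for \emph{any} $M > 0$.
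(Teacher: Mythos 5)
Your proof is correct and is the straightforward direct argument the lemma calls for: unwind Definition \ref{Sm}, note that the witness pair $(V,W)$ for $U$ also works for any $U' \sqsubseteq U$ by transitivity of $\sqsubseteq$. The paper simply cites \cite[Lemma 3.1]{abbott2021largest} as going through verbatim, and that argument is this same transitivity observation, so you have reproduced the intended proof.
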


\begin{proof}
The argument in \cite[Lemma 3.1]{abbott2021largest} goes through verbatim.
\end{proof}

The next proposition shows that given a relative HHS, the nested partial tuples themselves can be given the structure of a relative HHS for the appropriate domains. It is a key component of the verification of the Large Links axiom (10) in Theorem \ref{clean containers ABD}. This proposition is based on \cite[Proposition 5.11]{behrstock2019hierarchically}, which proves the result for any HHS.

\begin{prop}
\label{BHS19 5.11}
Let $(\mathcal{X},\mathfrak{S})$ be an $E$-relative HHS. There exists a constant $F$, depending only on $E$, such that for any $U\in\mathfrak{S}$ and any slice $\mathbf{F}_U\times\{\Vec{e}\}\subset \mathcal{X}$ endowed with the subspace metric, the space $(\mathbf{F}_U,\mathfrak{S}_U)$ is an $F$-relative HHS.
\end{prop}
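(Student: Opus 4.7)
The plan is to verify the twelve relative HHS axioms for $(\mathbf{F}_U, \mathfrak{S}_U)$, essentially following the HHS argument of \cite[Proposition 5.11]{behrstock2019hierarchically} and checking that the generalization to the relative setting introduces no new difficulties. Fix a slice $\mathbf{F}_U \times \{\vec{e}\}$ and identify $\mathbf{F}_U$ with this slice in $\mathcal{X}$ endowed with the subspace metric. The index set will be $\mathfrak{S}_U$, and I would take the associated geodesic spaces $CV$, the restricted projections $\pi_V|_{\mathbf{F}_U}$, and the relations $\sqsubseteq, \perp, \pitchfork$ and relative projections $\rho^V_W$ all as inherited from $(\mathcal{X}, \mathfrak{S})$.

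First I would dispatch the axioms that pass through by direct inheritance. The nesting, orthogonality, and transversality relations all restrict to $\mathfrak{S}_U$, which is closed under $\sqsubseteq$ and has $U$ as its $\sqsubseteq$-maximal element. Finite complexity, bounded geodesic image, and consistency are statements about pairs or triples of domains together with points of $\mathcal{X}$, and so restrict to $\mathfrak{S}_U$ and $\mathbf{F}_U \subseteq \mathcal{X}$ without modification. The hyperbolicity axiom is immediate: any $V \in \mathfrak{S}_U$ with $CV$ not $E$-hyperbolic is $\sqsubseteq$-minimal in $\mathfrak{S}$ and hence in $\mathfrak{S}_U$. For the container axiom, if $V \in \mathfrak{S}_U$ and $W \in \mathfrak{S}_V$ satisfy $\mathfrak{S}_V \cap \mathfrak{S}_W^\perp \ne \emptyset$, the container supplied by $(\mathcal{X}, \mathfrak{S})$ already lies in $\mathfrak{S}_V \subseteq \mathfrak{S}_U$. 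For large links applied at $W \in \mathfrak{S}_U$, the witness family produced by $(\mathcal{X}, \mathfrak{S})$ is contained in $\mathfrak{S}_W \subseteq \mathfrak{S}_U$.

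The projection and partial realization axioms require a small amount of extra care but no new ideas. The map $\pi_V|_{\mathbf{F}_U}$ is $(E,E)$-coarsely Lipschitz because $\pi_V$ is on $\mathcal{X}$, and for $V \sqsubseteq U$ coarse surjectivity onto $CV$ follows from partial realization in $(\mathcal{X}, \mathfrak{S})$ applied to a target point $p \in CV$ combined with Proposition \ref{russell gates} to gate the resulting point into $\mathbf{F}_U \times \{\vec{e}\}$. Partial realization within $(\mathbf{F}_U, \mathfrak{S}_U)$, applied to a pairwise orthogonal family $\{V_i\} \subseteq \mathfrak{S}_U$, follows by feeding the given data on $\mathfrak{S}_U$ together with the fixed tuple $\vec{e}$ on $\mathfrak{S}_U^\perp$ into the realization map $\phi_U$ of Proposition \ref{prod map}.

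The main obstacle is the uniqueness axiom, which is the only axiom that directly compares the subspace metric on $\mathbf{F}_U$ to the projections $\pi_V$ for $V \in \mathfrak{S}_U$. I would apply the distance formula of Theorem \ref{distance formula} for $(\mathcal{X}, \mathfrak{S})$ to two points $x, y$ in the slice. Propositions \ref{prod map} and \ref{russell gates} guarantee that for every $W \notin \mathfrak{S}_U$ the projections $\pi_W(x)$ and $\pi_W(y)$ lie within a uniform constant $C = C(E)$ of a common reference point (either $\rho^U_W$ when $W \pitchfork U$ or $U \sqsubsetneq W$, or the coordinate $e_W$ when $W \perp U$). Choosing the distance formula threshold $s$ larger than $2C$ annihilates all such terms, so the sum collapses onto $\mathfrak{S}_U$; the lower bound in the distance formula then yields the required uniqueness function for $(\mathbf{F}_U, \mathfrak{S}_U)$ with constant $F$ depending only on $E$.
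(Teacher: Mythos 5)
Your proposal is correct and takes essentially the same approach as the paper, which simply notes that the verification of the axioms from \cite[Proposition 5.11]{behrstock2019hierarchically} carries over verbatim to the relative case. You flesh out the details of why each axiom restricts, and you correctly identify that the only genuinely ``relative'' concern is the hyperbolicity axiom, which is automatic since $\mathfrak{S}_U$ is closed under nesting so $\sqsubseteq$-minimality is preserved. Two small remarks: for uniqueness one can avoid the distance formula and argue directly from the uniqueness axiom of $(\mathcal{X},\mathfrak{S})$, since the domains outside $\mathfrak{S}_U$ have uniformly bounded projection distance for points in the slice, so any domain witnessing a large projection for $r$ above that bound must already lie in $\mathfrak{S}_U$; and for partial realization, ``feeding the data into $\phi_U$'' is slightly imprecise since $\phi_U$ takes a full consistent tuple as input rather than partial data on a few orthogonal domains --- the standard fix is to apply partial realization in $(\mathcal{X},\mathfrak{S})$ and then gate the resulting point into the slice using Proposition \ref{russell gates}, which preserves projections to $\mathfrak{S}_U$ up to bounded error.
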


\begin{proof}
The proof goes through verbatim as in \cite[Proposition 5.11]{behrstock2019hierarchically}.
\end{proof}

The following lemmas and proposition build towards Lemma \ref{bgi}, which in turn is used in the proof of the Bounded Geodesic Image axiom (9) in Theorem \ref{clean containers ABD}. Lemma \ref{lem7A}, Lemma \ref{bhs 2,14}, and Proposition \ref{redone} are slight generalizations of \cite[Lemma 2.1]{behrstock2019hierarchically}, \cite[Lemma 2.14]{behrstock2019hierarchically}, and \cite[Proposition 2.4]{Behrstock_2017}, respectively, which were proven for the non-relative case.

\begin{lemma}
\label{lem7A}
Let $(\mathcal{X},\mathfrak{S})$ be a relative HHS and let $U_1,U_2,...,U_k\in\mathfrak{S}$ be pairwise orthogonal. Then $k\leq E$.
\end{lemma}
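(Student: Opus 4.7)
The plan is to build a $\sqsubseteq$-descending chain of length $k$ via iterated applications of the Container axiom, then invoke Finite Complexity to bound $k$ by $E$. This is the standard argument from \cite[Lemma 2.1]{behrstock2019hierarchically}, and I expect it to transfer directly since nothing in it uses hyperbolicity of the $\sqsubseteq$-minimal spaces; only axioms (2), (3), (6), and (7) are needed.

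Concretely, I would set $Q_0$ to be the unique $\sqsubseteq$-maximal element from the Nesting axiom, so that $U_j \sqsubseteq Q_0$ for every $j$. Then I would induct on $i$ to produce $Q_i \sqsubsetneq Q_{i-1}$ for $1 \leq i \leq k-1$ with the additional property that $U_{i+1},\dots,U_k \sqsubseteq Q_i$. For the inductive step, the hypothesis gives $U_i \in \mathfrak{S}_{Q_{i-1}}$, and since $i < k$, the domain $U_{i+1}$ lies in $\mathfrak{S}_{Q_{i-1}} \cap \mathfrak{S}^\perp_{U_i}$, which is therefore non-empty. The Container axiom (7) applied to $W = Q_{i-1}$ and $U = U_i$ then supplies $Q_i \sqsubsetneq Q_{i-1}$ nesting every element of $\mathfrak{S}_{Q_{i-1}} \cap \mathfrak{S}^\perp_{U_i}$; in particular $U_{i+1},\dots,U_k$ are all nested in $Q_i$ since pairwise orthogonality of the $U_j$ carries through and each $U_j$ is already in $\mathfrak{S}_{Q_{i-1}}$ by the previous inductive step.

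Running this induction up to $i = k-1$ produces a chain
\[
Q_0 \sqsupsetneq Q_1 \sqsupsetneq \cdots \sqsupsetneq Q_{k-1}
\]
of $k$ pairwise $\sqsubseteq$-comparable elements of $\mathfrak{S}$. The Finite Complexity axiom (6) then bounds the cardinality of any such chain by $E$, giving $k \leq E$. Trivial small-$k$ cases ($k \leq 1$) are handled separately and need no argument.

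There is no real obstacle: the main sanity check is simply verifying at each inductive step that $U_{i+1},\dots,U_k$ really do remain in $\mathfrak{S}_{Q_i}$, which is immediate from the container's defining property together with the pairwise orthogonality hypothesis. Notably, this proof does \emph{not} require clean containers — only the bare Container axiom is used — so the lemma holds in the full generality claimed.
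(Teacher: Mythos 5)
Your argument is correct and is precisely the one the paper is invoking: the paper's proof simply states that \cite[Lemma 2.1]{behrstock2019hierarchically} goes through verbatim, and your inductive construction of a descending chain of containers followed by an appeal to Finite Complexity is exactly that argument, correctly observing that only axioms (2), (3), (6), (7) are used and none of them rely on hyperbolicity of $\sqsubseteq$-minimal spaces.
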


\begin{proof} The argument in \cite[Lemma 2.1]{behrstock2019hierarchically} goes through verbatim.
\end{proof}

\begin{definition}
Let $(\mathcal{X},\mathfrak{S})$ be a relative HHS. The \emph{level} $\ell_U$ is defined inductively as follows. If $U$ is $\sqsubseteq$-minimal, then $\ell_U=1$. For any non-$\sqsubseteq$-minimal element $V,\in\mathfrak{S}$, $\ell_V=k+1$ if $k$ is the maximal integer such that there exists a $W\sqsubsetneq V$ with $\ell_W=k$. Moreover, define $\mathfrak{T}^\ell_U$ to be the set of $V\in\mathfrak{S}_U$ such that $\ell_U-\ell_V=\ell$. 
\end{definition}

\begin{lemma}
\label{bhs 2,14}
Given a relative HHS $(\mathcal{X},\mathfrak{S})$, let $\chi$ be the maximum cardinality of a set of pairwise orthogonal elements of $\mathfrak{T}^{\ell}_U$. Then there exists a $\chi$-coloring of the set of relevant elements of $\mathfrak{T}^{\ell}_U$ such that non-transverse elements have different colors.
\end{lemma}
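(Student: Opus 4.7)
The plan is to view the claim as a graph-coloring problem on the orthogonality graph $G$ whose vertices are the relevant elements of $\mathfrak{T}^\ell_U$ and whose edges connect orthogonal pairs. Distinct elements of $\mathfrak{T}^\ell_U$ share a common level and hence are never $\sqsubseteq$-comparable, so ``non-transverse'' coincides with ``orthogonal'' on distinct vertices. A proper $\chi$-coloring of $G$ is therefore exactly the coloring sought, with $\chi$ equal to the clique number $\omega(G)$; by Lemma \ref{lem7A}, $\chi\leq E$.

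I would argue by induction on $\chi$. The base case $\chi=1$ is immediate, since $G$ has no edges and a single color suffices. For $\chi\geq 2$, I fix a pairwise orthogonal family $\{V_1,\ldots,V_\chi\}$ of relevant elements realizing the maximum. By maximality, every other relevant $W$ must satisfy $W\pitchfork V_i$ for at least one $i$, since otherwise $\{W,V_1,\ldots,V_\chi\}$ would be pairwise orthogonal of size $\chi+1$. The strategy is to strip off a color class tied to $V_1$ and reduce to a relative HHS substructure where the inductive hypothesis applies. Concretely, I designate a candidate color-$1$ class consisting of $V_1$ together with relevant elements $W$ such that $W\pitchfork V_1$, and let the remaining relevant elements be those orthogonal to $V_1$. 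By the containers axiom, all such elements nest into a common container $Q\sqsubsetneq U$, and Proposition \ref{BHS19 5.11} ensures that $(\mathbf{F}_Q,\mathfrak{S}_Q)$ is again a relative HHS with these elements sitting at a common level. Any pairwise orthogonal subfamily inside this substructure has size at most $\chi-1$, since extending it by $V_1$ would produce a clique in $G$ exceeding $\chi$; the inductive hypothesis then supplies a $(\chi-1)$-coloring of the remaining elements using new colors.

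The main obstacle will be verifying that the color-$1$ class is genuinely an independent set in $G$, i.e., that no two relevant $W_1,W_2$ both transverse to $V_1$ are themselves orthogonal. A naive contradiction from $\{W_1,W_2,V_1,\ldots,V_{i-1}\}$ does not arise when $i$ is small, since such a family has size below $\chi$. To handle this, I would recursively subdivide the candidate color-$1$ class by iterating the peeling argument within it: apply the inductive scheme once more to the subclass of $V_1$-transverse elements, using an inner induction on the clique number restricted to that subclass. The finiteness of $\chi$ guaranteed by Lemma \ref{lem7A}, together with careful bookkeeping of the levels as one passes to nested containers via Proposition \ref{BHS19 5.11}, should close the induction and deliver a coloring using exactly $\chi$ colors.
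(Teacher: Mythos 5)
The paper's own ``proof'' of this lemma is a one-sentence citation to \cite[Lemma 2.14]{behrstock2019hierarchically}, asserting that the BHS argument does not use hyperbolicity of $\sqsubseteq$-minimal domains, so there is no detailed argument in the paper to compare against. You are therefore attempting a from-scratch reconstruction, and as written it has a genuine gap.

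Your framing of the problem is correct: at a fixed level, non-transverse distinct domains are exactly orthogonal pairs, so the task is a proper $\chi$-coloring of the orthogonality graph, whose clique number is $\chi$ (bounded by $E$ via Lemma \ref{lem7A}). You also correctly identify the obstacle: the candidate color-$1$ class $\{V_1\}\cup\{W : W\pitchfork V_1\}$ need not be independent, since nothing prevents two domains transverse to $V_1$ from being orthogonal to one another. That is exactly where the argument breaks, and your proposed repair does not close it, for two reasons. First, the orthogonality graph restricted to the $V_1$-transverse domains can still have clique number $\chi$ (a maximum pairwise orthogonal family need not contain $V_1$), so the ``inner induction on the clique number restricted to that subclass'' has no parameter that decreases and does not terminate. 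Second, and more seriously, the construction colors the $V_1$-orthogonal subclass (via the container $Q$ and Proposition \ref{BHS19 5.11}) and the $V_1$-transverse subclass by independent inductions, yet a domain transverse to $V_1$ can be orthogonal to a domain that is orthogonal to $V_1$; such a pair must receive distinct colors, and nothing in the two uncoordinated sub-colorings enforces that cross-class constraint. Concatenating colorings built on disjoint vertex sets is only legitimate when there are no edges between the sets, which is precisely what fails here.

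The upshot is that the split ``transverse to $V_1$'' versus ``orthogonal to $V_1$'' is too coarse: the coloring must be determined globally by how each relevant domain sits with respect to a nested chain of containers inside $U$, so that the color assigned to a domain encodes the point in the chain at which it stops being orthogonal to the chain's defining domains. An induction that separates the two subclasses and colors them independently cannot produce a proper coloring, regardless of the bookkeeping of levels. You should either reproduce the BHS argument explicitly (which is what the paper implicitly relies on) or design a single coloring rule along the nested-container chain and verify directly that orthogonal pairs receive different colors; the current double-induction scheme does neither.
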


\begin{proof}
The argument in \cite[Lemma 2.14]{behrstock2019hierarchically} goes through verbatim because it does not utilize the hyperbolicity of $\sqsubseteq$-minimal elements at any point.
\end{proof}

\begin{prop}
\label{redone}
Fix a relative HHS, $(\mathcal{X},\mathfrak{S})$, and let $\mathfrak{U}\subset \mathfrak{S}$ be closed under nesting. The space $(\widehat{\mathcal{X}}_{\mathfrak{U}}, \mathfrak{S} - \mathfrak{U})$ is a relative HHS, where the associated $C(*)$, $\pi_*$, $\rho^*_*$, $\sqsubseteq$, $\perp$, $\pitchfork$ are the same as in the original structure.
\end{prop}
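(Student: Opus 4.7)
The plan is to verify all twelve relative HHS axioms for the pair $(\widehat{\mathcal{X}}_{\mathfrak{U}}, \mathfrak{S}-\mathfrak{U})$, taking all structural data (projections $\pi_W$, relative projections $\rho^V_W$, and the relations $\sqsubseteq$, $\perp$, $\pitchfork$) to be inherited from $(\mathcal{X}, \mathfrak{S})$. Every case is driven by the single observation that, since $\mathfrak{U}$ is closed under nesting, no $W \in \mathfrak{S}-\mathfrak{U}$ is nested in any $V \in \mathfrak{U}$; equivalently, for $W \in \mathfrak{S}-\mathfrak{U}$ and $V \in \mathfrak{U}$ one has $V \sqsubsetneq W$, $V \perp W$, or $V \pitchfork W$. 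I tacitly assume $\mathfrak{U} \subsetneq \mathfrak{S}$ so the $\sqsubseteq$-maximal element $S$ remains in $\mathfrak{S}-\mathfrak{U}$, and note that $\widehat{\mathcal{X}}_{\mathfrak{U}}$ inherits the quasi-geodesic property from $\mathcal{X}$.

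Most of the axioms follow with little change. Nesting, Orthogonality, Transversality, and Finite Complexity follow by restricting the existing relations to $\mathfrak{S}-\mathfrak{U}$. Hyperbolicity is preserved because any $W$ with $CW$ non-$E$-hyperbolic is already $\sqsubseteq$-minimal in $\mathfrak{S}$ and hence in $\mathfrak{S}-\mathfrak{U}$. Consistency and Bounded Geodesic Image are immediate because neither the projections nor the $\rho$-maps change, and Partial Realization goes through because realization points produced in $\mathcal{X}$ already lie in $\widehat{\mathcal{X}}_{\mathfrak{U}}$.

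The Container and Large Links axioms use the closure-under-nesting hypothesis in a central way. For Containers, given $W \in \mathfrak{S}-\mathfrak{U}$ and $U \in (\mathfrak{S}-\mathfrak{U})_W$ with $(\mathfrak{S}-\mathfrak{U})_W \cap (\mathfrak{S}-\mathfrak{U})^\perp_U$ nonempty, the original container $Q$ cannot lie in $\mathfrak{U}$: any witness $V$ in that intersection satisfies $V \sqsubseteq Q$, so $Q \in \mathfrak{U}$ would force $V \in \mathfrak{U}$. For Large Links, after applying the original axiom one simply discards the $V_i$ that lie in $\mathfrak{U}$; no $U \in (\mathfrak{S}-\mathfrak{U})_W$ can be nested in such a $V_i$ by the same closure argument. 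The Projections axiom is the only one whose verification really sees the new metric: since the metric on $\widehat{\mathcal{X}}_{\mathfrak{U}}$ is smaller than that on $\mathcal{X}$, it suffices to check that $\pi_W$ is uniformly bounded on each coned-off slice $\mathbf{F}_V \times \{\vec{e}\}$ with $V \in \mathfrak{U}$. By the structural observation, either $V \sqsubsetneq W$ or $V \pitchfork W$ (in which case $\pi_W$ of the slice is coarsely $\rho^V_W$) or $V \perp W$ (in which case Proposition \ref{prod map} gives $\pi_W \approx e_W$ on the slice), so paths through cone apices change $\pi_W$ by a uniformly bounded amount.

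The main obstacle is the Uniqueness axiom, which requires a quantitative comparison between $d_{\widehat{\mathcal{X}}}$ and the projections to $\mathfrak{S}-\mathfrak{U}$. I would argue the contrapositive: if $d_W(\pi_W(x), \pi_W(y)) < r$ for every $W \in \mathfrak{S}-\mathfrak{U}$, then $d_{\widehat{\mathcal{X}}}(x,y)$ is bounded by a function of $r$. The strategy is to take a $(D,D)$-hierarchy path $\gamma$ from $x$ to $y$ in $\mathcal{X}$ via Proposition \ref{hpaths}, then shortcut each active subpath inside $\widehat{\mathcal{X}}_{\mathfrak{U}}$. For every $U \in \mathfrak{U}$ that is $200DE$-relevant for $x, y$, Proposition \ref{active subpaths} produces a subpath $\beta_U \subset \mathcal{N}_{\nu_D}(\mathbf{P}_U)$ of $\gamma$; since $\mathbf{P}_U$ is a union of slices coned off in $\widehat{\mathcal{X}}_{\mathfrak{U}}$, $\beta_U$ can be replaced by a path of uniformly bounded length. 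The remaining portions of $\gamma$ have bounded projection to every domain of $\mathfrak{S}-\mathfrak{U}$ by hypothesis and to every $U \in \mathfrak{U}$ that is either irrelevant or already collapsed into an active subpath, so the distance formula in $\mathcal{X}$ controls their contribution, yielding the desired $\theta'(r)$. Getting this bookkeeping to respect the chain of nesting from $S$ down to the relevant $\sqsubseteq$-minimal domains—so that the number of shortcuts, not just their individual cost, is controlled—is the quantitative heart of the argument.
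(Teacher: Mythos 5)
Your proposal takes essentially the same route as the paper, which defers the verification to \cite[Proposition 2.4]{Behrstock_2017} and simply notes that the Uniqueness argument there (\cite[Lemma 2.8]{Behrstock_2017}) requires Active Subpaths (Proposition \ref{active subpaths}) together with condition (3) of Definition \ref{hdef}, both of which the paper supplies for the relative setting via Proposition \ref{hpaths}. Your axiom-by-axiom walkthrough correctly identifies the driving observation (closure under nesting forces $V \sqsubsetneq W$, $V \perp W$, or $V \pitchfork W$ whenever $V \in \mathfrak{U}$ and $W \notin \mathfrak{U}$), correctly singles out Projections, Containers, Large Links, and Uniqueness as the cases requiring this observation, and correctly pinpoints Uniqueness as the one needing the Active Subpaths machinery.

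One intermediate assertion in your Uniqueness sketch is overclaimed and deserves flagging, since it is exactly the point the inductive bookkeeping exists to repair. You write that an active subpath $\beta_U \subset \mathcal{N}_{\nu_D}(\mathbf{P}_U)$ ``can be replaced by a path of uniformly bounded length'' because $\mathbf{P}_U$ is ``a union of slices coned off in $\widehat{\mathcal{X}}_{\mathfrak{U}}$.'' This inference does not follow: the factored space collapses each individual slice $\mathbf{F}_U \times \{\vec{e}\}$, but the product region $\mathbf{P}_U$ decomposes as $\mathbf{F}_U \times \mathbf{E}_U$, and travel in the $\mathbf{E}_U$-direction (i.e., between slices) is not collapsed. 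Along $\beta_U$ the $\mathbf{E}_U$-coordinate can move a lot whenever some $V \in \mathfrak{S}^{\perp}_U \cap \mathfrak{U}$ is relevant for the endpoints of $\beta_U$; that contribution is controlled only after further shortcutting at the level of $V$, which is what forces the iteration down the nesting chain (cf.\ the color-by-color induction in Lemma \ref{bgi}). Your closing sentence acknowledges that this chain is ``the quantitative heart of the argument,'' so you clearly see the issue, but as written the sketch presents the slice collapse as immediately killing $\beta_U$, which it does not. Everything else checks out.
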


\begin{proof}
The proof goes through verbatim as in \cite[Proposition 2.4]{Behrstock_2017}, noting that \cite[Lemma 2.8 (Uniqueness)]{Behrstock_2017} requires Active Subpaths (Proposition \ref{active subpaths}), as well as condition (3) in the definition of a hierarchy path (Definition \ref{hdef}), which we prove for the relative case in Proposition \ref{hpaths}.
\end{proof}

As with Lemma \ref{bhs 2,14} and Proposition \ref{redone}, Lemma \ref{bgi} is a modified version of \cite[Lemma 3.6]{abbott2021largest} for the case of a relative HHS. The argument is the same, but is worked through in its entiretly for completeness and to demonstrate that it does not require $\sqsubseteq$-minimal geodesic spaces to be hyperbolic.

\begin{lemma}
\label{bgi}
Let $(\mathcal{X},\mathfrak{S})$ be a relative HHS, and consider a set $\mathfrak{T}\subset \mathfrak{S}$ that is closed under nesting. Let $\lambda\geq 1$, and let $\nu_{\lambda}$ be the associated constant from Proposition \ref{active subpaths}. For any $x,y\in\mathcal{X}$, let $\gamma$ be a $(\lambda,\lambda)$-hierarchy path in $(\mathcal{X},\mathfrak{S})$ connecting $x$ and $y$. Then the path obtained by including $\gamma\subset\mathcal{X}\subset\hat{\mathcal{X}}_{\mathfrak{T}}$ is an unparametrized quasi-geodesic. Moreover, if for each $W\in\mathfrak{T}$ which is a $200\lambda E$-relevant domain for $x$ and $y$, and each $e\in\mathbf{E}_W$, we modify $\gamma$ by removing all but the first and last vertices contained in the $\nu_{\lambda}$-neighborhood of $\mathbf{F}_W\times \{e\}$, then the new path $\hat{\gamma}$ is a hierarchy path for $(\hat{\mathcal{X}}_{\mathfrak{T}},\mathfrak{S}-\mathfrak{T})$. 
\end{lemma}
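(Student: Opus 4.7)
The plan rests on two tools. Proposition \ref{redone} gives that $(\widehat{\mathcal{X}}_\mathfrak{T}, \mathfrak{S} - \mathfrak{T})$ is a relative HHS with the same projections $\pi_V$ and relative projections $\rho^V_U$, and hence has its own distance formula. Proposition \ref{active subpaths} supplies, for each $W \in \mathfrak{T}$ that is $200\lambda E$-relevant for $x, y$, an active subpath $\beta_W \subset \mathcal{N}_{\nu_\lambda}(\mathbf{P}_W)$ of $\gamma$ whose $W$-projection covers $\pi_W(\gamma)$ up to error $24(\lambda E + E)$ and outside of which $\pi_W$ is $\nu_\lambda$-coarsely constant. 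Define $\hat{\gamma}$ as in the statement: for each pair $(W, e)$ with $W \in \mathfrak{T}$ relevant and $e \in \mathbf{E}_W$, keep only the first and last vertex of $\gamma$ lying in $\mathcal{N}_{\nu_\lambda}(\mathbf{F}_W \times \{e\})$. Consecutive vertices of $\hat{\gamma}$ are then uniformly close in $\widehat{\mathcal{X}}_\mathfrak{T}$: either they were $2\lambda$-consecutive on $\gamma$, so their $\widehat{\mathcal{X}}_\mathfrak{T}$-distance is at most $2\lambda$; or the jump spans a single slice $\mathbf{F}_W \times \{e\}$, which is coned off to diameter one, bounding the distance by $2\nu_\lambda + 1$.

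For the quasi-geodesic property, the strategy is to compare the two distance formulas for any pair of retained vertices $p, q$. The subpath of $\gamma$ from $p$ to $q$ satisfies $d_\mathcal{X}(p, q) \asymp \sum_{V \in \mathfrak{S}} \{\!\{ d_V(\pi_V(p), \pi_V(q)) \}\!\}_s$; by the three bullet points of Proposition \ref{active subpaths}, the portion of $\gamma$'s length attributable to each relevant $W \in \mathfrak{T}$ is concentrated precisely on the subpaths removed in passing to $\hat{\gamma}$, up to uniform additive error. The remaining length of $\hat{\gamma}$ from $p$ to $q$ is therefore coarsely $\sum_{V \in \mathfrak{S}-\mathfrak{T}} \{\!\{ d_V(\pi_V(p), \pi_V(q)) \}\!\}_s \asymp d_{\widehat{\mathcal{X}}_\mathfrak{T}}(p, q)$. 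Taking the reparametrization $g$ of $\gamma$ that lists the retained vertices in order then shows $\gamma$ is an unparametrized quasi-geodesic in $\widehat{\mathcal{X}}_\mathfrak{T}$, which is the first assertion of the lemma, and the same analysis shows $\hat{\gamma}$ itself is a quasi-geodesic, verifying condition (1) of Definition \ref{hdef}.

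Conditions (2) and (3) of Definition \ref{hdef} for $\hat{\gamma}$ exploit the nesting-closure of $\mathfrak{T}$: for any $V \in \mathfrak{S} - \mathfrak{T}$ and $W \in \mathfrak{T}$ we cannot have $V \sqsubseteq W$, so $V$ satisfies $V \sqsupsetneq W$, $V \perp W$, or $V \pitchfork W$. When $V \perp W$, $\pi_V$ is determined by the $\mathbf{E}_W$-coordinate and so is coarsely constant on each slice $\mathbf{F}_W \times \{e\}$. When $V \sqsupsetneq W$, Proposition \ref{russell gates} together with consistency ensures $\pi_V$ is uniformly close to $\rho^W_V$ on $\mathcal{N}_{\nu_\lambda}(\mathbf{P}_W)$. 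When $V \pitchfork W$, consistency combined with the fact that $\pi_W$ varies broadly along the active subpath forces $\pi_V$ to be uniformly close to $\rho^W_V$ on the interior. In all three cases, removing interior vertices within a slice-neighborhood perturbs $\pi_V \circ \gamma$ only by a uniformly bounded amount; hence $\pi_V \circ \hat{\gamma}$ remains an unparametrized quasi-geodesic contained in the same $\lambda$-neighborhood of the geodesic from $\pi_V(x)$ to $\pi_V(y)$ as $\pi_V \circ \gamma$. The main obstacle is accounting for the accumulated constants throughout this chain of arguments, since the active-subpath bound, the distance-formula comparison, and the case analysis for $V \in \mathfrak{S}-\mathfrak{T}$ each introduce multiplicative or additive errors that must be bundled into a single uniform $(D, D)$ quality for $\hat{\gamma}$.
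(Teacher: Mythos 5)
Your argument diverges from the paper's in a substantive way, and the divergence creates a gap. The paper does \emph{not} compare the two distance formulas directly; it proceeds by induction on complexity, using Lemma \ref{lem7A} and Lemma \ref{bhs 2,14} to produce an $E$-coloring of the relevant $\sqsubseteq$-minimal domains of $\mathfrak{T}$ so that same-color domains are pairwise transverse. It then cones off one color at a time (invoking Proposition \ref{redone} at each step), moves up one level of the nesting lattice, and repeats; finite complexity bounds the process by at most $E^2$ iterations, so the constants accumulate only a bounded number of times and the final path is a uniform-quality hierarchy path. The point of the coloring is precisely to ensure that at each step the slices being coned off have essentially disjoint active subpaths, so that the resulting jumps in the modified path are individually controlled.

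Your approach skips this machinery, and the place where it matters is your claim that a jump in $\hat{\gamma}$ either joins $2\lambda$-consecutive vertices of $\gamma$ or ``spans a single slice.'' That is false in general: because $\mathfrak{T}$ is closed under nesting, it necessarily contains chains $W_1\sqsubsetneq W_2$, whose slice-neighborhoods along $\gamma$ nest; moreover, transverse domains in $\mathfrak{T}$ can have overlapping $\nu_\lambda$-neighborhoods at the scale of the active subpaths. Doing the removal for every pair $(W,e)$ in one pass can delete the endpoints of one slice-neighborhood because they are interior to another, so that a single surviving jump in $\hat{\gamma}$ spans a chain of several overlapping slice-neighborhoods. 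To bound such a jump in $\widehat{\mathcal{X}}_{\mathfrak{T}}$ you would need to chain through overlap vertices, and the length of that chain is not bounded without an argument like the paper's coloring plus finite complexity. Similarly, your distance-formula comparison ``the portion of $\gamma$'s length attributable to each relevant $W\in\mathfrak{T}$ is concentrated on the removed subpaths'' needs to rule out overcounting when removed subpaths for distinct $W$ overlap or nest, which you do not address. Your case analysis for conditions (2) and (3) of Definition \ref{hdef} is reasonable in outline, but you explicitly defer the accumulation of constants to the end without resolving it, whereas the paper's induction is exactly what keeps that accumulation uniform. So the route you chose is genuinely different, but as written it has a real gap in the overlap/nesting control; the coloring-plus-induction mechanism in the paper is there to close precisely that gap.
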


\begin{proof}
The proof is by induction on complexity. Fix $x,y\in\mathcal{X}$ as well as a $(\lambda,\lambda)$-hierarchy path connecting them. Consider all the $\sqsubseteq$-minimal elements $\mathfrak{U}\subset\mathfrak{T}$ which are $200\lambda E$-relevant for $x$ and $y$. By Proposition \ref{active subpaths}, for each $U\in\mathfrak{U}$, there exists a subpath $\beta_U\subset\gamma$ which is contained in the $\nu_{\lambda}$-neighborhood of $\mathbf{P}_U$. By the definition of $\mathbf{P}_U$, $\beta_U$ passes through the $\nu_{\lambda}$-neighborhood of a collection of slices $\mathbf{F}_U\times\{\Vec{e}\}$. Next, consider $\mathfrak{T}^{\ell_S-1}_S\subset\mathfrak{S}$. By Lemma \ref{lem7A}, the maximum cardinality of a set of pairwise orthogonal elements of $\mathfrak{T}^{\ell_S-1}_S$ is $E$. By Lemma \ref{bhs 2,14} there exists an $E$-coloring of the set of relevant elements of $\mathfrak{T}^{\ell_S-1}_S$ such that non-transverse elements have different colors. Since $\mathfrak{U}\subseteq \mathfrak{T}^{\ell_S-1}_S$ and every element of $\mathfrak{U}$ is $200\lambda E$-relevant, there exists an $E$-coloring of $\mathfrak{U}$ such that all domains of a particular color are pairwise transverse.

Starting from $(\mathcal{X},\mathfrak{S})$, we proceed one color at a time. For the first color $c_1$, all domains of that color are $\sqsubseteq$-minimal by definition, so the set of domains of color $c_1$ is closed under nesting. Create the factored space by coning off those domains. By Proposition \ref{redone}, this factored space is a relative HHS with the property that, in this space, the $\sqsubseteq$-minimal $200\lambda E$-relevant domains for $x$ and $y$ are exactly the original ones except for the ones we have coned off thus far. Since this path still travels monotonically through the $\nu_{\lambda}$ neighborhood of some slices $\mathbf{F}_U\times\{\Vec{e}\}$ of the product regions of the $200\lambda E$-relevant domains, it is an unparametrized quasi-geodesic in this new factored space. Thus the path $\hat{\gamma}$ is a quasi-geodesic and thus a $(C,C)$-hierarchy path in the new factored space, with $C$ depending only on $\lambda$, $\mathfrak{T}$, and the hierarchy constants from $(\mathcal{X},\mathfrak{S})$. Once the colors of $\mathfrak{U}$ are exhausted, repeat one step up the nesting lattice. Since both the complexity of a $(\mathcal{X},\mathfrak{S})$ and the colorings are bounded by $E$, this will terminate in at most $E^2$ steps. Finally, cone off any domains in $\mathfrak{T}$ which are not relevant for $x$ and $y$ to obtain the space $(\hat{\mathcal{X}}_{\mathfrak{T}},\mathfrak{S}-\mathfrak{T})$. Through the final step, $\hat{\gamma}$ remains a uniform quality hierarchy path since it is still a quasi-geodesic.
\end{proof}

We are now ready to construct the new relative HHS structure. The proof follows \cite[Theorem 3.7]{abbott2021largest}, but we take this opportunity to fill in some missing details and correct some minor errors in the proof.

\begin{thm}
\label{clean containers ABD}
Every relatively hierarchically hyperbolic space with the bounded domain dichotomy and clean containers admits a relatively hierarchically hyperbolic structure with relatively unbounded products.
\end{thm}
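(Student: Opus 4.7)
The plan is to adapt the maximization procedure of \cite[Theorem 3.7]{abbott2021largest} to the relative HHS setting, producing a new relative HHS structure $(\mathcal{X}, \mathfrak{T})$ on the same underlying space by discarding intermediate domains whose product regions lack an unbounded orthogonal factor. Let $M$ be the constant from the bounded domain dichotomy, and recall from Lemma \ref{GMclosed} that $\mathfrak{S}^M$ is closed under nesting. The collection $\mathfrak{T}$ will consist of $S$, the $\sqsubseteq$-minimal domains, and the non-minimal non-maximal domains in $\mathfrak{S}^M$; the removed collection $\mathfrak{B} := \mathfrak{S} \setminus \mathfrak{T}$ must be shown to be nesting-closed, using Lemma \ref{GMclosed} together with the clean containers assumption to control how containers of removed domains interact with the surviving orthogonal structure.

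The new structure retains all hyperbolic spaces $CU$, projections $\pi_U$, and relative projections $\rho^*_*$ from $(\mathcal{X}, \mathfrak{S})$ for domains in $\mathfrak{T} \setminus \{S\}$, along with the nesting, orthogonality, and transversality relations inherited from $\mathfrak{S}$. The top-level geodesic space is replaced by the factored space $\widehat{\mathcal{X}}_{\mathfrak{B}}$, with the new projection $\pi_S$ given by the composition $\mathcal{X} \hookrightarrow \widehat{\mathcal{X}}_{\mathfrak{B}}$ and each new relative projection $\rho^V_S$ given by the image of $\mathbf{P}_V$. Proposition \ref{redone} guarantees $\widehat{\mathcal{X}}_{\mathfrak{B}}$ is hyperbolic with constants depending only on the original hierarchy data.

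I would then verify the twelve relative HHS axioms for $(\mathcal{X}, \mathfrak{T})$ one at a time. Most (Projections, Nesting, Orthogonality, Transversality, Hyperbolicity, Finite Complexity, Consistency, and Partial Realization) inherit directly from $(\mathcal{X}, \mathfrak{S})$. The Container axiom is where clean containers plays its central role: given $U \in \mathfrak{T}$ with $\mathfrak{T}_W \cap \mathfrak{T}^\perp_U \neq \emptyset$, the clean container $Q \sqsubseteq W$ provided by $\mathfrak{S}$ satisfies $Q \perp U$, and one shows that the presence of a $\mathfrak{T}$-orthogonal to $U$ forces $Q \in \mathfrak{S}^M$, hence $Q \in \mathfrak{T}$. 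The remaining three axioms (Uniqueness, Bounded Geodesic Image for nesting into $S$, and Large Links) require genuine technical work: Uniqueness and Bounded Geodesic Image rely on Lemma \ref{bgi} together with Proposition \ref{active subpaths} to transfer hierarchy paths from $(\mathcal{X}, \mathfrak{S})$ to hierarchy paths in the coned-off top space; Large Links is verified by induction on level using Proposition \ref{BHS19 5.11}.

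Relatively unbounded products then holds by construction: any non-minimal, non-maximal $U \in \mathfrak{T}$ lies in $\mathfrak{S}^M$, so there exist $V \sqsupseteq U$ and $W \perp V$ with $\text{diam}(CV), \text{diam}(CW) > M$; the orthogonality axiom gives $W \perp U$, the bounded domain dichotomy makes $CW$ infinite, and a short argument shows that the witnessing $W$ itself lies in $\mathfrak{S}^M$ and thus in $\mathfrak{T}$, so $\mathbf{E}_U$ is unbounded in the new structure. The main obstacle is verifying Uniqueness and Bounded Geodesic Image at the new top level, since these require quantitative control over the projection of original hierarchy paths into $\widehat{\mathcal{X}}_{\mathfrak{B}}$; this is exactly the content delivered by Lemma \ref{bgi} and Proposition \ref{active subpaths}, so the proof reduces to a careful case analysis verifying that the hypotheses of those results are satisfied by $\mathfrak{B}$.
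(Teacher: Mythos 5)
Your construction of the new top-level hyperbolic space is wrong, and this is fatal. You set $\mathcal{T}_S = \widehat{\mathcal{X}}_{\mathfrak{B}}$ where $\mathfrak{B} = \mathfrak{S}\setminus\mathfrak{T}$ is the collection of \emph{discarded} domains, and cite Proposition \ref{redone} to conclude hyperbolicity. But Proposition \ref{redone} only asserts that the factored space is again a \emph{relative HHS} — it says nothing about hyperbolicity, and indeed $\widehat{\mathcal{X}}_{\mathfrak{B}}$ will not be hyperbolic in general, precisely because the retained domains in $\mathfrak{T}$ still carry unbounded product regions (that is the whole point of keeping them). A concrete counterexample is $\mathcal{X} = F_2 \times F_2$ with its standard HHS structure: there $\mathfrak{T} = \mathfrak{S}$ (the top domain $S$ plus two orthogonal $\sqsubseteq$-minimal domains), so $\mathfrak{B} = \emptyset$ and your $\mathcal{T}_S = \widehat{\mathcal{X}}_{\emptyset} = F_2 \times F_2$, which is not hyperbolic. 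The paper cones off something much larger, namely $\mathfrak{S}^{M+} = \mathfrak{S}^M \cup \{\text{minimal }U\text{ with }\mathbf{F}_U\text{ unbounded}\}$, and then invokes \cite[Corollary 2.16]{Behrstock\_2017}: one checks that no two domains in $\mathfrak{S}-\mathfrak{S}^{M+}$ are orthogonal with both geodesic spaces of diameter exceeding $M$, which is exactly the condition guaranteeing the cone-off is hyperbolic. Your plan misses this essential step; coning off only the discarded intermediate domains leaves the unbounded flats untouched.

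There is a second, compounding problem: even to apply Proposition \ref{redone} you need the coned-off set to be closed under nesting, but your $\mathfrak{B}$ is not. Every domain in $\mathfrak{B}$ is by definition non-$\sqsubseteq$-minimal, and hence (by finite complexity) contains a $\sqsubseteq$-minimal subdomain, which is never in $\mathfrak{B}$. So $\mathfrak{B}$ fails nesting-closedness whenever it is non-empty, and when it is empty the hyperbolicity failure above kicks in. By contrast, the set $\mathfrak{S}^{M+}$ the paper cones off \emph{is} nesting-closed, since $\mathfrak{S}^M$ is nesting-closed (Lemma \ref{GMclosed}) and minimal domains have no proper subdomains. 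Finally, a more minor discrepancy: the paper does not define the retained index set $\mathfrak{T}$ via membership in $\mathfrak{S}^M$ as you do. It defines $\mathfrak{T}$ directly in terms of the unboundedness of $\mathbf{F}_U$ and $\mathbf{E}_U$, setting $\mathfrak{T}=\{S\}\sqcup\text{Unb}\sqcup\text{Min}$, and this is the formulation that makes the Containers axiom verification (via clean containers) go through cleanly: one shows $\mathbf{F}_Q$ and $\mathbf{E}_Q$ are both unbounded, hence $Q\in\text{Unb}$, rather than trying to place the container $Q$ in $\mathfrak{S}^M$, which would require a domain \emph{above} $Q$ with large geodesic space that need not exist.
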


\begin{proof} Let $(\mathcal{X},\mathfrak{S})$ be an $E$-relatively hierarchically hyperbolic space with the $M$-bounded domain dichotomy. Without loss of generality, let $E\geq M$. Thus, any finite diameter associated geodesic space in $(\mathcal{X},\mathfrak{S})$ is $E$-hyperbolic. Let $\mathfrak{T}$ denote the $\sqsubseteq$-maximal element $S$ together with $\sqsubseteq$-minimal elements of $\mathfrak{S}$ with infinite-diameter geodesic spaces, and the subset of $\mathfrak{S}$ consisting of all $U\in\mathfrak{S}$ with both $\mathbf{F}_U$ and $\mathbf{E}_U$ unbounded. For convenience, define 
$$\text{Unb} = \{U\in\mathfrak{T}\;|\;\text{both $\mathbf{F}_U$ and $\mathbf{E}_U$ are unbounded}\},$$
$$\text{Min} = \{U\in\mathfrak{T}\;|\;\text{$U$ is $\sqsubseteq$-minimal, $\mathbf{E}_U$ is bounded, and $\mathbf{F}_U$ is unbounded}\}.$$ So,
$$\mathfrak{T}=\{S\}\sqcup \text{Unb}\sqcup \text{Min}.$$

We begin to define our new relatively hierarchically hyperbolic structure on $\mathcal{X}$ by taking $\mathfrak{T}$ as our index set. For each $U\in \mathfrak{T}-\{S\}$ we set the associated geodesic space $\mathcal{T}_U$ to be $CU$. Note that for non-$\sqsubseteq$-minimal $U\in\mathfrak{T}$, the space $\mathcal{T}_U$ will be hyperbolic because $CU$ is hyperbolic.

For the top-level domain, $S$, we obtain a hyperbolic space, $\mathcal{T}_S$ as follows. By Lemma \ref{GMclosed}, $\mathfrak{S}^M$ is closed under nesting, so
$$\mathfrak{S}^{M+} = \mathfrak{S}^M\cup\{U\in\mathfrak{S}\;|\;U\text{ is $\sqsubseteq$-minimal with unbounded $\mathbf{F}_U$}\}$$
is also closed under nesting. By \cite[Proposition 2.4]{Behrstock_2017}, $\widehat{\mathcal{X}}_{\mathfrak{S}^{M+}}$ is a hierarchically hyperbolic space with index set $\mathfrak{S}-\mathfrak{S}^{M+}$, because $\mathfrak{S}^{M+}$ is closed under nesting and contains all $\sqsubseteq$-minimal domains which are not $E$-hyperbolic. Fix any orthogonal $U,V\in\mathfrak{S}-\mathfrak{S}^{M+}$. If $\text{diam}(CU)>M$ and $\text{diam}(CV)>M$, then by definition $U,V\in\mathfrak{S}^{M+}$, which is a contradiction. Therefore, $\widehat{\mathcal{X}}_{\mathfrak{S}^{M+}}$ has the property that no pair of orthogonal domains both have geodesic spaces of diameter larger than $M$. Thus by \cite[Corollary 2.16]{Behrstock_2017}, it is hyperbolic for some constant depending only on $(\mathcal{X},\mathfrak{S})$, and $M$. We set $\mathcal{T}_S = \widehat{\mathcal{X}}_{\mathfrak{S}^{M+}}$.

To avoid confusion, we use the notation $d_S$ for distance in $\mathcal{T}_S$ and the notation $d_{CS}$ for the distance in $CS$. Moreover, $\pi_{CS}$ denotes the projection from $\mathcal{X}$ to $CS$, whereas $\pi_S$ denotes the projection from $\mathcal{X}$ to $\mathcal{T}_S$.

When $U\ne S$, the projections are as defined in the original relatively hierarchically hyperbolic space. We take the projection $\pi_S$ to be the factor map $\mathcal{X}\to \mathcal{T}_S$. If $U\in\mathfrak{T}$ and $U\ne S$, then the relative projections are defined as in $(\mathcal{X},\mathfrak{S})$. For the remaining case the relative projection $\rho_S^V$ is defined to be the image of $\mathbf{F}_V$ under the factor map $\mathcal{X}\to \mathcal{T}_S$.\\

\noindent
\textbf{(1) Projections:} The only case to check is for the top-level domain $S$. Since $\pi_S$ is a factor map, for all $x\in\mathcal{X}$, $\pi_S(x)\ne\emptyset$ and diam$(\pi_S(x))=0$. Moreover, $\pi_S$ is $(1,0)$-coarsely Lipschitz and $\mathcal{T}_S = \hat{\mathcal{X}}_{\mathfrak{S}^{M+}} \subseteq \mathcal{N}_2(\pi_S(\mathcal{X}))$, so this axiom is satisfied.\\

\noindent
\textbf{(2) Nesting:} The partial order is inherited from $(\mathcal{X},\mathfrak{S})$. The projections are given in the construction. The diameter bound is inherited from $(\mathcal{X},\mathfrak{S})$ except for the case of $\rho_S^V$ for $V\in\mathfrak{T}$. By construction, $\rho_S^V$ is the image of $\mathbf{F}_V$ under the factor map $\mathcal{X}\to \mathcal{T}_S$, and $\mathfrak{T}\subset\mathfrak{S}^{M+}$, thus the diameter of $\rho_S^V$ is bounded.

\noindent\\
\textbf{(3) Orthogonality:} This axiom only involves domains which are not $\sqsubseteq$-maximal, hence it is inherited from $(\mathcal{X},\mathfrak{S})$ by construction.

\noindent\\
\textbf{(4) Transversality:} This axiom only involves domains which are not $\sqsubseteq$-maximal, hence it is inherited from $(\mathcal{X},\mathfrak{S})$ by construction.

\noindent\\
\textbf{(5) Hyperbolicity:} As shown above, $\mathcal{T}_S$ is hyperbolic, and $CU$ is hyperbolic for any $U\in\mathfrak{T}-\{S\}$ which is not $\sqsubseteq$-minimal by construction.

\noindent\\
\textbf{(6) Finite Complexity:} This follows directly from the fact that $\mathfrak{T}\subseteq\mathfrak{S}$ and the partial order on $\mathfrak{T}$ is inherited from $\mathfrak{S}$.

\noindent\\
\textbf{(7) Containers:} Fix $W\in\mathfrak{T}$ and $U\in\mathfrak{T}_W$ with $\mathfrak{T}_W\cap\mathfrak{T}^{\perp}_U\ne\emptyset$. Next fix $V\in\mathfrak{T}_W\cap\mathfrak{T}^{\perp}_U$. By the container axiom for $(\mathcal{X},\mathfrak{S})$, there exists a domain $Q\sqsubsetneq W$ such that $V\sqsubseteq Q$. Moreover, $(\mathcal{X},\mathfrak{S})$ has clean containers, so $Q\perp U$. The domains $U$ and $V$ are contained in $\mathfrak{T}=\{S\}\sqcup\text{Unb}\sqcup\text{Min}$, and neither is $S$ because they are orthogonal to each other. By construction of Unb and Min, the spaces $\mathbf{F}_U$ and $\mathbf{F}_V$ are unbounded. Because $V\sqsubseteq Q$ and $U\perp Q$, the spaces $\mathbf{F}_Q$ and $\mathbf{E}_Q$ are unbounded. Thus $Q\in\text{Unb}\subset\mathfrak{T}$, so this axiom is satisfied.\\

\noindent
\textbf{(8) Uniqueness:} By \cite[Corollary 2.9]{Behrstock_2017}, there exists a map
$$f\colon\hat{\mathcal{X}}_{\mathfrak{S}-\{S\}}\to CS$$
which is a $(C,C)$-quasi-isometry.  Fix $r>0$, and let 
$$r'=Cr+C^2+M+E'r+E^2,$$
where $E'$ is the hierarchy constant of $(\mathcal{T}_S,\mathfrak{S}-\mathfrak{S}^{M+})$.
Define $\theta'(r)=\theta(r')$, where $\theta$ is the uniqueness function for $(\mathcal{X},\mathfrak{S})$. Fix $x,y\in\mathcal{X}$ such that $\text{d}_{\mathcal{X}}(x,y)\geq\theta'(r)$. By the uniqueness axiom for $(\mathcal{X},\mathfrak{S})$, there exists $U\in\mathfrak{S}$ such that $\text{d}_{CU}(\pi_{CU}(x),\pi_{CU}(y))\geq r' \geq r$. 

If $U\in\mathfrak{T}-\{S\}$, then the axiom is satisfied because $\mathcal{T}_U=CU$, so suppose $U=S$. Observe that $\pi_S\colon\mathcal{X}\to\mathcal{T}_S$ is 1-Lipschitz and there is a 1-Lipschitz map $g\colon\mathcal{T}_S\to \hat{\mathcal{X}}_{\mathfrak{S}-\{S\}}$. Moreover, by the construction in the proof of \cite[Corollary 2.9]{Behrstock_2017}, the following diagram commutes:

\[\begin{tikzcd}
	{\mathcal{X}} \\
	{\mathcal{T}_S} \\
	{\hat{\mathcal{X}}_{\mathfrak{S}-\{S\}}} & {\pi_{CS}(\mathcal{X})\subseteq CS}
	\arrow["{\pi_S}"', from=1-1, to=2-1]
	\arrow["{\pi_{CS}}", from=1-1, to=3-2]
	\arrow["g"', from=2-1, to=3-1]
	\arrow["f"', from=3-1, to=3-2]
\end{tikzcd}\]
Therefore,
\begin{align*}
d_S(\pi_S(x),\pi_S(y))&\geq d_{\hat{\mathcal{X}}_{\mathfrak{S}-\{S\}}}(g\circ\pi_S(x),g\circ\pi_S(y))\\
& \geq \frac{1}{C}\text{d}_{CS}(f\circ g\circ\pi_S(x),f\circ g\circ\pi_S(y))-C\\
& = \frac{1}{C}\text{d}_{CS}(\pi_{CS}(x),\pi_{CS}(y))-C\\
& \geq \frac{r'}{C}-C\\
&\geq r.
\end{align*}
so the axiom is satisfied in this case.

Finally suppose $U\in\mathfrak{S}-\mathfrak{T}$ (Note that this approach is different than that of \cite{abbott2021largest}). There are two possibilities: $U\in\mathfrak{S}^{M+}$ or $U\in\mathfrak{S}-\mathfrak{S}^{M+}$. If $U\in\mathfrak{S}^{M+}$, then $\mathbf{E}_U$ is unbounded by definition. Since $U\notin\mathfrak{T}$, it must be the case that $\mathbf{F}_U$ is bounded, and, in particular, $CU$ must be bounded. However, $(\mathcal{X},\mathfrak{S})$ has the $M$-bounded domain dichotomy, and $d_U(\pi_U(x),\pi_U(y))\geq r'>M$, so $CU$ cannot be bounded. Hence, $U\in\mathfrak{S}-\mathfrak{S}^{M+}$. Recall that $(\mathcal{T}_S,\mathfrak{S}-\mathfrak{S}^{M+})$ is an $E'$-HHS where $CU$ and $\pi_U$ are the same as in the original structure. In particular, $\pi'_U\colon\mathcal{T}_S\to CU$ is $(E',E')$-coarsely Lipschitz and $\pi_U=\pi'_U\circ\pi_S$. Thus,
\begin{align*}
    d_S(\pi_S(x),\pi_S(y))&\geq \frac{1}{E'}d_U(\pi'_U\circ\pi_S(x),\pi'_U\circ\pi_S(y))-E'\\
    &=\frac{1}{E'}d_U(\pi_U(x),\pi_U(y))-E'\\
    &\geq \frac{r'}{E'}-E'\\
    &\geq r.
\end{align*}
which satisfies the final case.

\noindent\\
\textbf{(9) Bounded Geodesic Image:} Let $\lambda\geq D$ where $D$ is the constant from Proposition \ref{hpaths}. Additionally, let $\nu_{\lambda}$ be the constant from Proposition \ref{active subpaths} associated to $\lambda$. Because $\mathcal{T}_S$ is hyperbolic, let $M$ be the Morse gauge such that any geodesic in $\mathcal{T}_S$ is $M$-Morse. Finally, let $C$ be the constant coming from Lemma \ref{bgi} such that any $(\lambda,\lambda)$-hierarchy path $\gamma$ in $(\mathcal{X},\mathfrak{S})$ can be modified to be a $(C,C)$-hierarchy path $\hat{\gamma}$ in $(\hat{\mathcal{X}}_{\mathfrak{S}^{M+}},\mathfrak{S}-\mathfrak{S}^{M+}) =(\mathcal{T}_S,\mathfrak{S}-\mathfrak{S}^{M+}) $. Now define 
$$E'=200\lambda E + \nu_{\lambda}+M(C,C).$$
We will show that for all $x,y\in\mathcal{X}$ and $V,W\in\mathfrak{T}$ with $V\sqsubsetneq W$, if 
$$\text{d}_V(\pi_V(x),\pi_V(y))\geq E',$$
then every $CW$-geodesic from $\pi_W(x)$ to $\pi_W(y)$ intersects the $E'$-neighborhood of $\rho^V_W$. If $W\ne S$, this follows from the bounded geodesic image axiom from $(\mathcal{X},\mathfrak{S})$ applied to $V$ and $W$. 

Thus consider the case where $W=S$ and $V\sqsubsetneq S$. Fix $x,y\in\mathcal{X}$ such that $d_V(\pi_V(x),\pi_V(y))\geq E'$, and let  $\gamma$ be a geodesic in $\mathcal{T}_S$ from $\pi_S(x)$ to $\pi_S(y)$. By Proposition \ref{hpaths}, there exists a $(\lambda,\lambda)$-hierarchy path $\alpha$ in $\mathcal{X}$ with endpoints $x$ and $y$. By Lemma \ref{bgi}, there exists $\hat{\alpha}\subset\pi_S(\alpha)$ which is a $(C,C)$-quasi-geodesic for some $C$ depending only on $\lambda$, $\mathfrak{S}^{M+}$, and the hierarchy constants from $(\mathcal{X},\mathfrak{S})$. By Proposition \ref{active subpaths}, $\alpha$ has a subpath $\beta$ such that $\beta\subset\mathcal{N}_{\nu_{\lambda}}(\mathbf{P}_V)$. Because $\hat{\alpha}$ is a $(C,C)$-quasi-geodesic with endpoints on $\gamma$, which is $M$-Morse, for any point $a\in\pi_S(\beta)\subset\hat{\alpha}$, there exists a point $b\in\gamma$ such that $d_S(a,b)\leq M(C,C)$. Moreover, such a point $a\in\pi_S(\beta)\subset\hat{\alpha}$ must exist because by the construction of $\hat{\alpha}$ in Lemma \ref{bgi}, both the initial and final vertex of $\pi_S(\beta)$ are in $\hat{\alpha}$. Therefore, using the fact that $\rho^V_S=\pi_S(\mathbf{F}_V)$, we obtain
\begin{align*}
    d_S(\gamma,\rho^V_S) &\leq d_S(\pi_S(\beta),\rho^V_S) + d_S(\pi_S(\beta),\gamma)\\
    &\leq d_S(\pi_S(\beta),\rho^V_S)+M(C,C)\\
    &=d_S(\pi_S(\beta),\pi_S(\mathbf{F}_V))+M(C,C)\\
    &=d_S(\pi_S(\beta),\pi_S(\mathbf{P}_V))+M(C,C)\\
    &\leq d_{\mathcal{X}}(\beta,\mathbf{P}_V)+M(C,C)\\
    &\leq \nu_{\lambda}+M(C,C)\\
    &\leq E',
\end{align*}
so this axiom is satisfied.

\noindent\\
\textbf{(10) Large Links:} First define the constant
\begin{equation}
\label{E hat large links}
\hat{E} = E'(K_0+1)+ E'(1+ N(1+s_0)+C_0) + M\cdot E,  
\end{equation}
where $E'=\max(E,F)$, $F$ is the relative HHS constant for any $(\mathbf{F}_U,\mathfrak{S}_U)$ from Proposition \ref{BHS19 5.11}, $N$ is defined in the procedure below, and $K_0,C_0,s_0$ come from the distance formula for the HHS $(\mathcal{T}_S,\mathfrak{S}-\mathfrak{S}^{M+})$. We will show that for all $W\in\mathfrak{T}$ and $x,y\in\mathcal{X}$, there exists $\{V_1,\dots,V_m\}\subseteq \mathfrak{T}_W-\{W\}$ such that $m\leq \hat{E}\cdot \text{d}_W(\pi_W(x),\pi_W(y))+\hat{E}$, and for all $U\in\mathfrak{T}_W-\{W\}$, either $U\in\mathfrak{T}_{V_i}$ for some $i$, or $\text{d}_U(\pi_U(x),\pi_U(y))\leq \hat{E}$.

Fix $W\in\mathfrak{T}$ and $x,y\in\mathcal{X}$. If $W$ is $\sqsubseteq$-minimal, then the set $\mathfrak{T}_W-\{W\}=\emptyset$, so the condition is trivially satisfied. Next, suppose $W\in \text{Unb}$. Consider the set $\{T_i\}\subset \mathfrak{S}_W-\{W\}$ provided by the large link axiom for $(\mathcal{X},\mathfrak{S})$. Since $T_i\sqsubsetneq W$ and $\mathbf{E}_W$ is unbounded, $\mathbf{E}_{T_i}$ is unbounded for all $i$. For any $T\in\mathfrak{T}_W-\{W\}$ such that $d_T(\pi_T(x),\pi_T(y))>M\cdot E,$
the large link axiom for $(\mathcal{X},\mathfrak{S})$ implies $$d_T(\pi_T(x),\pi_T(y))=d_{CT}(\pi_T(x),\pi_T(y))>M\cdot E\geq E,$$ so $T\sqsubseteq T_j$ for some $j$. Additionally, by the bounded domain dichotomy property, $d_T(\pi_T(x),\pi_T(y))>M\cdot E\geq M$ implies $\mathbf{F}_T$ is unbounded, so $T\sqsubseteq T_j$ implies $\mathbf{F}_{T_j}$ is unbounded. Thus $T_j\in\mathfrak{T}$. Therefore, the set of such $T_j\in\mathfrak{T}$ has cardinality less than or equal to $|\{T_i\}|$ and so satisfies the appropriate conditions for the large link axiom.

For the final case, let $W=S$; note that this case was not discussed in \cite{abbott2021largest}. Let $A^1_1 = \{T^1_i\}\subseteq \mathfrak{S}-\{S\}$ be the collection of domains provided by the large link axiom for $(\mathcal{X},\mathfrak{S})$. For any $T\in\mathfrak{T}-\{S\}$ 
such that  $ d_T(\pi_T(x),\pi_T(y))>E,$
the large link axiom for $(\mathcal{X},\mathfrak{S})$ implies $T\sqsubseteq T^1_i\in A^1_1$ for some $i$. Define the subset $B^1_1\subseteq A^1_1$ to be all $T^1_i$ such that there exists $T\in \mathfrak{T}$ with $d_T(\pi_T(x),\pi_T(y))>E$ and $T\sqsubseteq T^1_i$. Let $C^1_1= B^1_1\cap\mathfrak{T}$ and let $D^1_1=B^1_1-C^1_1$. If $D^1_1=\emptyset$, then we are done. 

\begin{figure}[t]
\centering
\begin{tikzpicture}
[
    level 1/.style={draw=red},
    level 2/.style={draw=black},
    level 5/.style={draw=red},
    level 6/.style={draw=black},
    level 9/.style={draw=red},
    level 10/.style={draw=black},
]
	\node {$S$}
		child {
                node {$A^1_1$}
                child {
                    node {$B^1_1$}
                    child {node {$C^1_1$}}
                    child {node {$D^1_1$}
                        child {
                            node {$T^1_{i_1}$}
                            child { node {$A^2_{i_1}$}
                                child {node {$B^2_{i_1}$}
                                        child {node {$C^2_{i_1}$}}
                                        child {node {$D^2_{i_1}$}
                                            child {node {$T^2_{j_1}$}
                                                child {node {$A^3_{j_1}$}
                                                child{node {$\cdots$}}}}
                                            child {node {$T^2_{j_2}$}
                                            child {node {$\cdots$}}}
                                            child {node {$\cdots$}}
                                            child {node {$T^2_{j_m}$}
                                            child {node {$\cdots$}}}
                                            }}
                            }}
                        child {
                            node {$T^1_{i_2}$}
                            child {node {$\cdots$}}
                            }
                        child {
                            node {$\cdots$}
                            }
                        child {
                            node {$T^1_{i_n}$}
                            child {node {$\cdots$}}
                            }
                        }
                    }
                };
\end{tikzpicture}
\caption{The procedure of building nesting chains as described in the large link axiom. All black lines represent set containment (identifying the $T^p_q$ with $\{T^p_q\}$). The red lines represent applying the large link axiom to $T^p_q$ domain at the top, and as such every individual domain in the $A^{p+1}_q$ below nests into the $T^p_q$ above it.}
\end{figure}
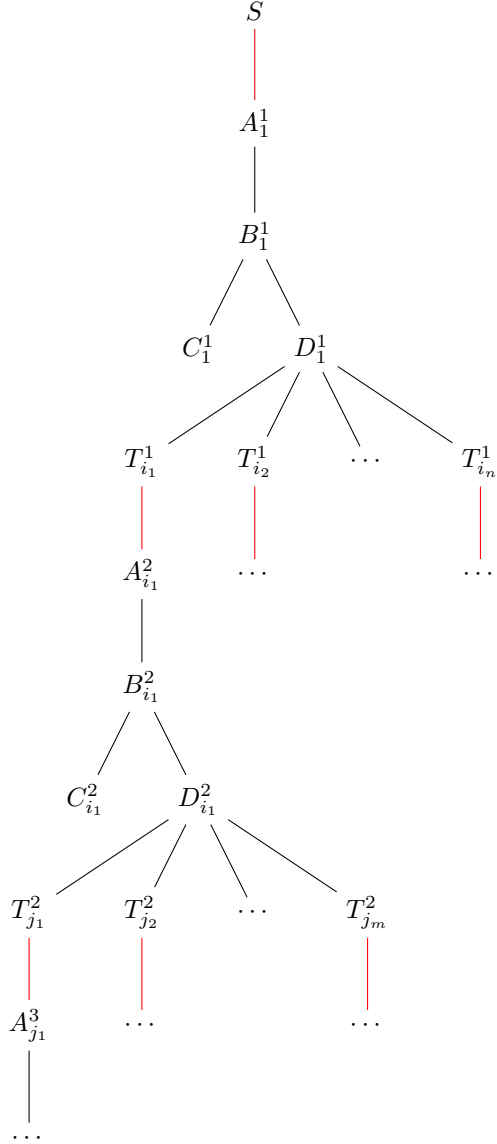

Suppose instead that $D^1_1\ne\emptyset$. For each $T^1_i\in D^1$, Proposition \ref{BHS19 5.11} implies $(\mathbf{F}_{T^1_i},\mathfrak{S}_{T^1_i})$ is an $F$-relative HHS. Applying the large link axiom to $(\mathbf{F}_{T^1_i},\mathfrak{S}_{T^1_i})$ with $W=T^1_i$, $\mathfrak{g}_{\mathbf{F}_{T^1_i}}(x)$, and $\mathfrak{g}_{\mathbf{F}_{T^1_i}}(y)$, we obtain a collection of domains $A^2_i=\{T^2_j\}\subseteq \mathfrak{S}_{T^1_i}-\{T^1_i\}$. Note that by \cite[Remark 1.16]{Behrstock_2017}, for any $T\sqsubseteq T^1_i$, we have $\pi_T\circ \mathfrak{g}_{\mathbf{F}_{T^1_i}}(x) = \pi_T(x)$. Therefore, for every $T\in\mathfrak{T}$ such that $T\sqsubsetneq T^1_i$ and  
$d_T(\pi_T(x),\pi_T(y))> F,$
we have that $d_T(\pi_T(\mathfrak{g}_{\mathbf{F}_{T^1_i}}(x)),\pi_T(\mathfrak{g}_{\mathbf{F}_{T^1_i}}(y)))> F,$
so there exists some $T^2_j\in A^2_i$ such that $T\sqsubseteq T^2_j$. Next, construct the sets $B^2_i,C^2_i,$ and $D^2_i$, analogously. If $D^2_i=\emptyset$, then this process can be terminated. If however there is some $T^2_j\in D^2_i$, then repeat, noting that $T^2_j\sqsubsetneq T^1_i$. Additionally, observe that the next step in this process will involve applying the large link axiom to the $F$-relative HHS $(\mathbf{F}_{T^2_j},\mathfrak{S}_{T^2_j})$ with $W=T^2_j$, $\mathfrak{g}_{\mathbf{F}_{T^2_j}}(x)$, and $\mathfrak{g}_{\mathbf{F}_{T^2_j}}(y)$. Repeating this process yields a chain 
$$T\sqsubseteq T^n_\ell \sqsubsetneq T^{n-1}_k \sqsubsetneq \cdots \sqsubsetneq T^2_j \sqsubsetneq T^1_i \sqsubsetneq S.$$
Note that we can vary the indices $i,j,k,$ etc. at each step, by choosing different elements of $D^p_q$, so we actually obtain many such chains. By finite complexity of $(\mathcal{X},\mathfrak{S})$, there can be no more than $E$ terms in any such chain, so $D^{E-1}_\ell$ must be empty for all indices $\ell$. Moreover, the large link axiom implies that each set $A^p_q$ is finite at every step, so there are finitely many chains obtained in this process. Finitely many chains of finite length then implies that the total number of domains produced in this process, given by
$$ \mathfrak{A} = \bigcup_{p}\bigcup_q A^p_q, $$
is finite. Let $N = |\mathfrak{A}|$. Recall that the sets $C^p_q\subseteq A^p_q$ consist of the domains provided by the large links axiom that are in $\mathfrak{T}$ and contain a domain $\mathfrak{T}$ that is relevant for $x$ and $y$. Additionally, let
$$ \mathfrak{C} = \bigcup_{p}\bigcup_q C^p_q.$$
Thus $\mathfrak{C}\subseteq\mathfrak{T}-\{S\}$ is a finite collection of domains such that for all $T\in\mathfrak{T}-\{S\}$, either $T\sqsubseteq U$ for some $U\in\mathfrak{C}$, or $\text{d}_T(\pi_T(x),\pi_T(y))\leq F\leq\hat{E}$, where $\hat{E}$ is as in \eqref{E hat large links}.

Finally, we will show that
$$ |\mathfrak{C}| \leq  \hat{E}\cdot d_{\mathcal{T}_S}(\pi_S(x),\pi_S(y))+\hat{E}. $$
Recall that $E'=\max(E,F)$. By the large links axiom,
$$ |\mathfrak{C}| \leq E'\cdot d_{CS}(\pi_{CS}(x),\pi_{CS}(y))+E' + \sum_{\substack{T_j\in \mathfrak{A}\\ T_j\notin\mathfrak{T}}} E'\cdot d_{T_j}(\pi_{T_j}\circ\mathfrak{g}_{\mathbf{F}_{T_j}}(x),\pi_{T_j}\circ\mathfrak{g}_{\mathbf{F}_{T_j}}(y))+E'.$$
Recall that for any domain $T_j\in\mathfrak{S}$, we have $\pi_{T_j}\circ\mathfrak{g}_{\mathbf{F}_{T_j}}(x)= \pi_{T_j}(x),$ so,
$$ |\mathfrak{C}| \leq E'\cdot d_{CS}(\pi_{CS}(x),\pi_{CS}(y))+E' + \sum_{\substack{T_j\in \mathfrak{A}\\ T_j\notin\mathfrak{T}}} E'\cdot d_{T_j}(\pi_{T_j}(x),\pi_{T_j}(y))+E'.$$
Then because $\mathfrak{A}$ has $N$ elements by definition,
$$ |\mathfrak{C}| \leq E'\cdot d_{CS}(\pi_{CS}(x),\pi_{CS}(y))+E' + NE'+\sum_{\substack{T_j\in A\\ T_j\notin\mathfrak{T}}} E'\cdot d_{T_j}(\pi_{T_j}(x),\pi_{T_j}(y)),$$
which can be rearranged to
$$ |\mathfrak{C}| \leq E'\cdot d_{CS}(\pi_{CS}(x),\pi_{CS}(y))+E' + NE'+E'\cdot\sum_{\substack{T_j\in \mathfrak{A}\\ T_j\notin\mathfrak{T}}}  d_{T_j}(\pi_{T_j}(x),\pi_{T_j}(y)).$$
Recall that $(\mathcal{T}_S,\mathfrak{S}-\mathfrak{S}^{M+})$ is an HHS where the associated $C(*)$, $\pi_*$, $\rho^*_*$, $\sqsubseteq$, $\perp$, $\pitchfork$ are the same as in $(\mathcal{X},\mathfrak{S})$. For any $T_j\in \mathfrak{A}$ such that $T_j\notin \mathfrak{T}$, because there exists $T\in\mathfrak{T}$ such that $T\sqsubsetneq T_j$, it must be the case that $T_j$ is not $\sqsubseteq$-minimal in $\mathfrak{S}$. Also $T_j$ cannot be contained in any $V\in\mathfrak{S}$ with $W\perp V$ such that diam$(CW)>M$, because this would imply $T_j\in\text{Unb}\subset\mathfrak{T}$. Therefore, $T_j\in\mathfrak{S}-\mathfrak{S}^{M+}$. The distance formula for $(\mathcal{T}_S,\mathfrak{S}-\mathfrak{S}^{M+})$ implies
\begin{align*}
\sum_{\substack{T_j\in \mathfrak{A}\\ T_j\notin\mathfrak{T}}} d_{T_j}(\pi_{T_j}(x),\pi_{T_j}(y)) &\leq Ns_0+\sum_{U\in\mathfrak{S}-\mathfrak{S}^{M+}} \{\!\{d_U(\pi_U(x),\pi_U(y))\}\!\}_{s_0} \\
&\leq K_0\cdot d_S(\pi_S(x),\pi_S(y))+C_0+Ns_0.
\end{align*}
Therefore,
\begin{align*}
|\mathfrak{C}| &\leq E'\cdot d_{CS}(\pi_{CS}(x),\pi_{CS}(y))+E' + NE'+E'\cdot\sum_{\substack{T_j\in \mathfrak{A}\\ T_j\notin\mathfrak{T}}}  d_{T_j}(\pi_{T_j}(x),\pi_{T_j}(y))\\
&\leq E'\cdot d_{CS}(\pi_{CS}(x),\pi_{CS}(y))+E' + NE'+E'(K_0\cdot d_S(\pi_S(x),\pi_S(y))+C_0+Ns_0)\\
&\leq E'\cdot d_S(\pi_S(x),\pi_S(y))+E' + NE'+E'(K_0\cdot d_S(\pi_S(x),\pi_S(y))+C_0+Ns_0)\\
&\leq E'(K_0+1)\cdot d_S(\pi_S(x),\pi_S(y))+ E'(1+ N(1+s_0)+C_0)\\
&\leq \hat{E}\cdot d_S(\pi_S(x),\pi_S(y))+ \hat{E},
\end{align*}
so this axiom is satisfied.

\noindent\\
\textbf{(11) Consistency:} If $V\pitchfork W$, then it remains true that
$$ \min\{\text{d}_W(\pi_W(x),\rho^V_W),
\text{d}_V(\pi_V(x),\rho^W_V)\}\leq E $$
for all $x\in\mathcal{X}$. Further, if $U\sqsubseteq V$ and $V \sqsubsetneq W$ or $V\pitchfork W$ and $W\perp U$, for $W\ne S$, then $\text{d}_W(\rho^U_W,\rho^V_W)\leq E$.

Thus, it remains to show that if $U\sqsubseteq V$ and $V \sqsubsetneq S$ then $\text{d}_S(\rho^U_S,\rho^V_S)\leq E$; note that this case was not discussed in \cite{abbott2021largest}. By definition, $\rho^V_S$ is the image of $\mathbf{F}_V$ under the factor map $\mathcal{X}\to\mathcal{T}_S$. Moreover, $U\sqsubsetneq V$ implies $\mathbf{F}_U\subset\mathbf{F}_V$, so $\rho^U_S\subset\rho^V_S$. Therefore, $\text{d}_S(\rho^U_S,\rho^V_S)=0\leq E$ and this condition is satisfied.

\noindent\\
\textbf{(12) Partial Realization:} Let $\{V_i\}$ be a finite collection of pairwise orthogonal elements of $\mathfrak{T}$ and $p_i\in CV_i$ for each $i$. First suppose $\{V_i\}=\{S\}$, and let $p\in\mathcal{T}_S$ be the chosen point. Because $\mathcal{T}_S$ is the cone-off of $\mathcal{X}$, there exists a point $x\in\mathcal{X}$ such that $d_S(\pi_S(x),p)\leq 1$, so the axiom is satisfied in this case.

Now suppose $\{V_i\}\ne\{S\}$. Let $x\in\mathcal{X}$ be the point provided by the partial realization axiom for $(\mathcal{X},\mathfrak{S})$. Then $d_{V_i}(\pi_{V_i}(x),p_i)\leq E$ still holds for all $i$ as the associated geodesic spaces in $(\mathcal{X},\mathfrak{S})$ and $(\mathcal{X},\mathfrak{T})$ are the same. Moreover, for all $i$ and for all domains $W\in\mathfrak{S}$ such that $W\ne S$ with $V_i\sqsubsetneq W$ or $W\pitchfork V_i$, then $d_W(\pi_W(x),\rho_W^{V_i})\leq E$ still holds.

It remains to show that $d_S(\pi_S(x),\rho_S^{V_i})\leq E'$ for any $i$; note that this case was not covered in \cite{abbott2021largest}. By partial realization for $(\mathcal{X},\mathfrak{S})$, 
$$d_{CS}(\pi_{CS}(x),\rho_{CS}^{V_i})\leq E.$$
Let $a=\mathfrak{g}_{V_i}(x)\in\mathbf{P}_{V_i}$, where $\mathfrak{g}_{V_i}$ is the gate map onto $\mathbf{P}_{V_i}$. Then by Proposition \ref{russell gates},
$$ d_{CS}(\pi_{CS}(a),\rho^{V_i}_{CS})\leq \mu.$$
The diameter of $\rho^{V_i}_{CS}$ is less than or equal to $E$, so
\begin{equation}
\label{partial realization}
d_{CS}(\pi_{CS}(x),\pi_{CS}(a))\leq 2E+\mu.  
\end{equation}
We will use the distance formula for the HHS $(\mathcal{T}_S,\mathfrak{S}-\mathfrak{S}^{M+})$ to bound $d_S(\pi_S(x),\pi_S(a))$. Let $K,C$ be the constants from Theorem \ref{distance formula} such that for all $w,z\in\mathcal{T}_S$,
$$ \frac{1}{K}\cdot d_S(w,z) - C \leq \sum_{U\in\mathfrak{S}-\mathfrak{S}^{M+}}\{\!\{ d_U(\pi_U(w),\pi_U(z)) \}\!\}_{s_0 + M + 2E+\mu},$$
where $s_0$ is the distance formula threshold.
Suppose towards contradiction that
$$ d_S(\pi_S(x),\pi_S(a)) > K+KC.$$
Thus
$$ 1 = \frac{1}{K}\cdot (K+KC)-C < \frac{1}{K}\cdot d_S(\pi_S(x),\pi_S(a)) - C \leq \sum_{U\in\mathfrak{S}-\mathfrak{S}^{M+}}\{\!\{ d_U(\pi_U(x),\pi_U(a)) \}\!\}_{s_0 + M + 2E+\mu}.$$
Therefore there exists some $U\in\mathfrak{S}-\mathfrak{S}^{M+}$ such that
\begin{equation}
\label{pr2}
d_U(\pi_U(x),\pi_U(a))\geq s_0+M+2E+\mu> 2E+\mu,  
\end{equation}
so \eqref{partial realization} implies $U\ne S$. Moreover, \eqref{pr2} and the bounded domain dichotomy property for $(\mathcal{X},\mathfrak{S})$ imply that $\mathbf{F}_U$ is unbounded. Since $V_i\sqsubsetneq S$ is an element of $\mathfrak{T}$, the domain $U$ cannot be orthogonal to $V_i$, lest $U\in \mathfrak{S}^{M+}$. Additionally $U$ cannot nest into $V_i$ for the following reasons. If $U=V_i\in(\mathfrak{T}-\{S\})\subset\mathfrak{S}^{M+}$, this is a contradiction. If $U\sqsubsetneq V_i$, then $V_i$ is not $\sqsubseteq$-minimal and thus $\mathbf{E}_{V_i}$ is unbounded, so $U\in\mathfrak{S}^{M+}$. Thus, either $U\pitchfork V_i$ or $V_i\sqsubsetneq U$. In either case, the partial realization axiom for $(\mathcal{X},\mathfrak{S})$ implies
$$ d_U(\pi_U(x),\rho^{V_i}_U)\leq E.$$
By Proposition \ref{russell gates}, because $a=\mathfrak{g}_{V_i}(x)$,
$$ d_U(\pi_U(a),\rho^{V_i}_U) = d_U(\pi_U\circ\mathfrak{g}_{V_i}(x),\rho^{V_i}_U)\leq \mu.$$
Finally, diam$(\rho^{V_i}_U)\leq E$ by definition, so
\begin{align*}
d_U(\pi_U(x),\pi_U(a))&\leq d_U(\pi_U(x),\rho^{V_i}_U)+\text{diam}(\rho^{V_i}_U)+d_U(\rho^{V_i}_U,\pi_U(a))\\
&\leq E + E +\mu\\
& = 2E+\mu,
\end{align*}
which contradicts \eqref{pr2}. Thus,
$$ d_S(\pi_S(x),\pi_S(a))\leq K+KC.$$
Moreover, since $a\in\mathbf{P}_{V_i}$, we have $\pi_S(a)\in\rho^{V_i}_S$. Thus,
$$ d_S(\pi_S(x),\rho^{V_i}_S)\leq K+KC.$$
Then taking $E'=K+KC+E$ completes this final axiom.
\end{proof}

\begin{definition}
We call the procedure in Theorem \ref{clean containers ABD} \emph{maximization} and say that the new structure $(\mathcal{X},\mathfrak{T})$ is the \emph{maximized structure}.
\end{definition}

\subsection{Bounded Projections if and only if Contracting}
\label{subsec bounded proj}

By performing the maximization procedure on a relative HHS, a new structure is constructed on which subspaces having bounded projections is equivalent to those subspaces being contracting. This equivalence is the basis for showing that the top level space in the maximized structure is a Morse detectability space. We begin by defining the notions of bounded projections and contracting.

\begin{definition}
Let $D>0$ and let $(\mathcal{X},\mathfrak{S})$ be a relative HHS. A subspace $\mathcal{Y}\subset \mathcal{X}$ has \emph{$D$-bounded projections} if diam$_U(\pi_U(\mathcal{Y}))<D$ for every $U\in\mathfrak{S}-\{S\}$.
\end{definition}

\begin{definition}
\label{contracting}
A subspace $\mathcal{Y}$ in a metric space $\mathcal{X}$ is \emph{$D$-contracting} if there exists a map $\pi_{\mathcal{Y}}\colon X\to\mathcal{Y}\subset X$ and constants $D>0$ and $A>1$ satisfying:
\begin{enumerate}
    \item for any $x\in\mathcal{Y}$, we have $d_X(x,\pi_\mathcal{Y}(x))<D$;
    \item if $x,y\in X$ with $d_X(x,y)<1$, then $d_X(\pi_\mathcal{Y}(x),\pi_\mathcal{Y}(y))<D$; and
    \item for all $x\in X$, if we set $\displaystyle{R=\frac{1}{A}\cdot d_X(x,\mathcal{Y})}$, then diam$_X(\pi_\mathcal{Y}(B_R(x)))\leq D$.
\end{enumerate}  
\end{definition}

The following theorem generalizes \cite[Theorem 4.4]{abbott2021largest} and illustrates the conditions for which being contracting and having bounded projections are equivalent. While this proof follows similar lines as in \cite{abbott2021largest}, we take this opportunity to fill in missing details.

\begin{thm}
\label{abd4.4}
Let $\mathcal{X}$ be a geodesic metric space and let $(\mathcal{X},\mathfrak{S})$ be an $E$-relative HHS with $|\mathfrak{S}|>1$. For any $D>0$ and $K\geq 1$ there exists $D'>0$ depending only on $D$ and $(\mathcal{X},\mathfrak{S})$ such that the following holds for every $(K,K)$-quasi-isometric embedding $\gamma\colon\mathcal{Y}\to\mathcal{X}$. If $\gamma(\mathcal{Y})$ has $D$-bounded projections, then $\gamma(\mathcal{Y})$ is $D'$-contracting. Moreover, if $(\mathcal{X},\mathfrak{S})$ has the bounded domain dichotomy, clean containers, and unbounded minimal products, then $\mathcal{X}$ admits a relatively hierarchically hyperbolic structure $(\mathcal{X},\mathfrak{T})$ with unbounded products where, additionally, if $\gamma$ is an $(M;K,K)$-stable embedding, then $\gamma(\mathcal{Y})$ has $D'$-bounded projections.
\end{thm}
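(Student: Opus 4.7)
The theorem has two independent conclusions, and the plan is to handle them separately. The first implication I would prove directly in the given structure $(\mathcal{X},\mathfrak{S})$, while the second requires first passing to the maximized structure via Theorem \ref{clean containers ABD}.

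For the implication that $D$-bounded projections yield $D'$-contracting, my strategy is to use the gate map $\mathfrak{g}_{\gamma(\mathcal{Y})}$ from Proposition \ref{general gates} as the contracting projection. First I would verify that $\gamma(\mathcal{Y})$ is hierarchically quasi-convex. The $D$-bounded projections hypothesis handles every $U \ne S$ automatically: condition (1) of Definition \ref{hqconvex} holds because bounded sets are trivially quasi-convex, and condition (2) holds because $\text{diam}(\pi_U(\gamma(\mathcal{Y}))) \le D$. For $U = S$, the distance formula together with boundedness of the other projections shows that $\pi_S \circ \gamma$ is itself a quasi-isometric embedding into the hyperbolic space $CS$, so Lemma \ref{qi hyp stable} forces $\pi_S(\gamma(\mathcal{Y}))$ to be stable and in particular quasi-convex; condition (3) of Definition \ref{hqconvex} follows from the distance formula. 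Conditions (1) and (2) of Definition \ref{contracting} then come directly from Proposition \ref{general gates}. For condition (3), given $x \in \mathcal{X}$ with $r = d_{\mathcal{X}}(x,\gamma(\mathcal{Y}))$ and $z \in B_{r/A}(x)$, I would estimate $d_{\mathcal{X}}(\mathfrak{g}_{\gamma(\mathcal{Y})}(x),\mathfrak{g}_{\gamma(\mathcal{Y})}(z))$ via the distance formula: summands with $U \ne S$ are bounded by $D$ by hypothesis, while the $U = S$ summand is controlled by the standard contracting property of coarse closest-point projection onto a quasi-convex subset of a hyperbolic space, provided $A$ is chosen large enough.

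For the second part, applying Theorem \ref{clean containers ABD} to $(\mathcal{X},\mathfrak{S})$ produces a maximized structure $(\mathcal{X},\mathfrak{T})$ with relatively unbounded products. Under the additional unbounded-minimal-products hypothesis, the $\text{Min}$ component in the proof of Theorem \ref{clean containers ABD} becomes empty, so $\mathfrak{T} = \{S\} \sqcup \text{Unb}$; every $U \in \mathfrak{T}\setminus\{S\}$ has both $\mathbf{F}_U$ and $\mathbf{E}_U$ unbounded, so $\mathbf{P}_U$ coarsely decomposes as a direct product of two infinite-diameter spaces and is therefore Morse limited. Now suppose toward contradiction that $\pi_U(\gamma(\mathcal{Y}))$ has arbitrarily large diameter for some $U \in \mathfrak{T}\setminus\{S\}$. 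Pick $p,q \in \mathcal{Y}$ witnessing this, connect $\gamma(p)$ to $\gamma(q)$ by a hierarchy path $\alpha$ via Proposition \ref{hpaths}, and use Proposition \ref{active subpaths} to produce a subpath $\beta \subset \alpha$ contained in a uniform neighborhood of $\mathbf{P}_U$ whose $\pi_U$-image has diameter nearly as large as $d_U(\pi_U(\gamma(p)), \pi_U(\gamma(q)))$. Since $\gamma$ is an $(M;K,K)$-stable embedding, Lemma \ref{stable is morse} provides a Morse quasi-geodesic $\beta' \subset \gamma(\mathcal{Y})$ between $\gamma(p)$ and $\gamma(q)$ with a uniform Morse gauge, and $\alpha$ (hence $\beta$) lies in a uniform neighborhood of $\beta'$. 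Gating the corresponding portion of $\beta'$ onto $\mathbf{P}_U$ via Proposition \ref{russell gates} yields a uniform-quality Morse quasi-geodesic in $\mathbf{P}_U$ of diameter comparable to that of $\beta$; Morse limitedness of $\mathbf{P}_U$ then forces this diameter to be uniformly bounded, contradicting the assumption once $d_U(\pi_U(\gamma(p)),\pi_U(\gamma(q)))$ is taken large enough.

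The main obstacle is the final step: transferring the Morse property of $\beta'$ in $\mathcal{X}$ to its gate image inside $\mathbf{P}_U$ with a uniform Morse gauge, so that the Morse limitedness of $\mathbf{P}_U$ actually applies. This requires carefully leveraging the hierarchical quasi-convexity of $\mathbf{P}_U$, the coarsely Lipschitz nature of the gate map from Proposition \ref{russell gates}, and the fact that the subspace and intrinsic metrics on $\mathbf{P}_U$ agree up to uniform constants. The hyperbolic contracting estimate needed in part one is more routine and relies on standard projection arguments in the hyperbolic space $CS$.
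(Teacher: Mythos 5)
Your plan for Part~1 is identical to the paper's up to condition~(3) of contracting, where there is a genuine gap. You propose to bound $d_{\mathcal{X}}(\mathfrak{g}_{\mathcal{Y}}(x),\mathfrak{g}_{\mathcal{Y}}(z))$ via the distance formula, noting that summands for $U\ne S$ are controlled by the $D$-bounded-projections hypothesis (which is correct, since both gate images lie in $\gamma(\mathcal{Y})$), and then controlling the $S$-summand by ``the standard contracting property of coarse closest-point projection'' in $CS$. But the hyperbolic contracting property requires that $d_S(\pi_S(x),\pi_S\circ\gamma(\mathcal{Y}))$ be large compared to $d_S(\pi_S(x),\pi_S(z))$, whereas the hypothesis only gives $d_{\mathcal{X}}(x,z)\le \frac{1}{A}\,d_{\mathcal{X}}(x,\gamma(\mathcal{Y}))$. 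These are not interchangeable: the point $x$ can be far from $\gamma(\mathcal{Y})$ in $\mathcal{X}$ entirely because of large projections to some $W\ne S$ while $\pi_S(x)$ sits right on $\pi_S\circ\gamma(\mathcal{Y})$. In that scenario the hyperbolic contracting property says nothing, and $d_S(\pi_S(x),\pi_S(z))$ can still be as large as $E\cdot d_{\mathcal{X}}(x,z)$, so you get no uniform bound. The paper closes this gap by first locating (via a threshold-doubling computation) a domain $W$ with $d_W(\pi_W(x),\pi_W\mathfrak{g}_{\mathcal{Y}}(x))\ge d_W(\pi_W(x),\pi_W(y))+L$, and then case-splitting: if $W=S$ the hyperbolic argument you describe works, but if $W\ne S$ the bounded geodesic image axiom is used to force the geodesics from $\pi_S(x)$ and $\pi_S(y)$ to their nearest-point projections to pass near the bounded set $\rho^W_S$, which pins the two projections close together. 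Without this case analysis (or some equivalent use of BGI), condition~(3) does not follow.

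Your Part~2 matches the paper's argument in essence. The paper also produces an arbitrarily long Morse quasi-geodesic in $\gamma(\mathcal{Y})$ lying in a uniform neighborhood of $\mathbf{P}_{U_i}$ and derives a contradiction from the fact that such product regions are uniformly Morse limited; the technical transfer you flag as an obstacle is handled there by citing \cite[Theorem A.3]{drutu2025weakmorsepropertiesspaces} rather than by gating and re-proving the Morse property inside $\mathbf{P}_{U}$. Your gating route is plausible but would require showing that the gate of a Morse quasi-geodesic onto a hierarchically quasi-convex set is again Morse with a controlled gauge; leaning on the existing citation is cleaner and is what the paper does.
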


\begin{proof} Fix $D>0$ and $K\geq 1$. Let $\gamma\colon\mathcal{Y}\to \mathcal{X}$ be a $(K,K)$-quasi-isometric embedding, and let $\gamma(\mathcal{Y})$ have $D$-bounded projections. 

\begin{claim}
    The set $\gamma(\mathcal{Y})$ is a hierarchically quasi-convex subset of $\mathcal{X}$
\end{claim}

\begin{claimproof}
Because $\gamma(\mathcal{Y})$ has $D$-bounded projections, the first two conditions of Definition \ref{hqconvex} are clearly satisfied for all $U\in\mathfrak{S}-\{S\}$. We now show $\pi_S\circ\gamma(\mathcal{Y})$ is quasi-convex. Fix two points $\pi_S(a),\pi_S(b)\in\pi_S\circ\gamma(\mathcal{Y})$. The distance formula implies there exists $k',c'$ such that
\begin{equation}
\label{distform4.4}
d_{\mathcal{X}}(a,b)\asymp_{k',c'} \sum_{U\in\mathfrak{S}}\{\!\{ d_U(\pi_U(a),\pi_U(b)) \}\!\}_{s_0+D} = d_S(\pi_S(a),\pi_S(b)), 
\end{equation}
where $s_0$ is the distance formula threshold for $(\mathcal{X},\mathfrak{S})$ from Theorem \ref{distance formula}. Since $\gamma$ is a quasi-isometric embedding, the composition $\pi_S\circ\gamma$ is a $(K',K')$-quasi-isometric embedding of $\mathcal{Y}$ into the hyperbolic (because $|\mathfrak{S}|>1$) space $CS$, for some constant $K'$. Thus $\pi_S\circ\gamma$ is a stable embedding by Lemma \ref{qi hyp stable} and therefore quasi-convex. For condition (3), fix some $x\in\mathcal{X}$ and $R\geq 0$ such that $d_U(\pi_U(x),\pi_U\circ\gamma(\mathcal{Y}))\leq R$ for every $U\in\mathfrak{S}$. Thus there exists a point $\pi_S(x')\in\pi_S\circ\gamma(\mathcal{Y})$ such that $d_S(\pi_S(x),\pi_S(x'))\leq R+1$. By \eqref{distform4.4}, 
$$d_{\mathcal{X}}(x,\gamma(\mathcal{Y}))\leq d_{\mathcal{X}}(x,x')\leq k'\cdot d_S(\pi_S(x),\pi_S(x'))+c'\leq k'(R+1)+c',$$
so condition (3) is satisfied.  
\end{claimproof}

Because $\gamma(\mathcal{Y})$ is hierarchically quasi-convex, Proposition \ref{general gates} implies that there exist a constant $\mu\geq 1$ and a $(\mu,\mu)$-coarsely Lipschitz gate map $\mathfrak{g}_{\mathcal{Y}}\colon\mathcal{X}\to\gamma(\mathcal{Y})$. The space $CS$ is $E$-hyperbolic, so Proposition \ref{general gates} states $d_S(\pi_S\circ\mathfrak{g}_{\mathcal{Y}}(x),p_S\circ\pi_S(x))\leq \mu$ for all $x\in\mathcal{X}$, where $p_S$ is the (coarse) projection $CS\to\pi_S\circ\gamma(\mathcal{Y})$. We will show that $\mathfrak{g}_{\mathcal{Y}}$ satisfies the three conditions from Definition \ref{contracting} for constant $D'$ determined throughout the course of the proof.

For condition (1), first fix some $x\in\gamma(\mathcal{Y})$. Then $\pi_S(x)\in\pi_S\circ\gamma(\mathcal{Y})$. By \cite[Lemma 11.53]{dructu2018geometric}, $d_S(\pi_S(x),p_S\circ\pi_S(x))\leq \Delta$, where $\Delta$ depends on $E$ and the quasi-convexity constant of $\gamma(\mathcal{Y})$. Define the constant 
$$L=s_0+D+E+\mu+2.$$
Because $\gamma(\mathcal{Y})$ has $D$-bounded projections, by taking the distance formula threshold to be $L$, there exists $K_1$ and $C_1$ depending on 
$(\mathcal{X},\mathfrak{S})$ such that
\begin{align*}
    d_{\mathcal{X}}(x,\mathfrak{g}_{\mathcal{Y}}(x)) &\leq K_1\cdot d_S(\pi_S(x),\pi_S\circ\mathfrak{g}_{\mathcal{Y}}(x))+C_1\\
    &\leq K_1(d_S(\pi_S(x),p_S\circ\pi_S(x))+d_S(p_S\circ\pi_S(x),\pi_S\circ\mathfrak{g}_{\mathcal{Y}}(x)))+C_1\\
    &\leq K_1(\Delta+\mu)+C_1.
\end{align*}
Thus condition (1) is satisfied for the constant $D_1= K_1(\Delta+\mu)+C_1$.

Condition (2) is satisfied for the constant $D_2=2\mu$, as $\mathfrak{g}_{\mathcal{Y}}$ is $(\mu,\mu)$-coarsely Lipschitz: for any $x,y\in\mathcal{X}$ such that $d_{\mathcal{X}}(x,y)<1$,
$$ d_{\mathcal{X}}(\mathfrak{g}_{\mathcal{Y}}(x),\mathfrak{g}_{\mathcal{Y}}(y))\leq \mu\cdot d_{\mathcal{X}}(x,y)+\mu< 2\mu. $$

Finally, we will verify condition (3). Fix some $x\in\mathcal{X}$, let $A=2K_1(C_1+K_1)$, and fix any point $y\in\mathcal{X}$ such that 
$$d_{\mathcal{X}}(x,y)< \frac{1}{A}d_{\mathcal{X}}(x,\gamma(\mathcal{Y})).$$
If $d_{\mathcal{X}}(x,y)<1$, then we are done by condition (2). Thus, suppose $d_{\mathcal{X}}(x,y)\geq 1$. Because $\gamma(\mathcal{Y})$ has $D$-bounded projections, for any $U\in\mathfrak{S}-\{S\}$, the distance formula implies
$$
d_{\mathcal{X}}(\mathfrak{g}_{\mathcal{Y}}(x),\mathfrak{g}_{\mathcal{Y}}(y))\leq K_1\cdot d_S(\pi_S\circ\mathfrak{g}_{\mathcal{Y}}(x),\pi_S\circ\mathfrak{g}_{\mathcal{Y}}(y))+C_1.
$$
Then the triangle inequality yields
\begin{equation}
\label{4.4Sbound}
d_{\mathcal{X}}(\mathfrak{g}_{\mathcal{Y}}(x),\mathfrak{g}_{\mathcal{Y}}(y))\leq K_1\cdot d_S(p_S\circ\pi_S(x),p_S\circ\pi_S(y))+2\mu K_1+C_1.
\end{equation}
Therefore, it suffices to bound the distance between the nearest point projections of $\pi_S(x)$ and $\pi_S(y)$ onto $\pi_S\circ\gamma(\mathcal{Y})$. 

Let $K_2,C_2$ be the distance formula constants for a threshold of $2L$. Then using the distance formula, we obtain
\begin{align*}
\sum_{U\in\mathfrak{S}}\{\!\{ d_U(\pi_U(x),\pi_U\circ\mathfrak{g}_{\mathcal{Y}}(x)) \}\!\}_{2L} &\geq \frac{1}{K_2}d_{\mathcal{X}}(x,\mathfrak{g}_{\mathcal{Y}}(x))-C_2\\
&\geq \frac{1}{K_2}d_{\mathcal{X}}(x,\gamma(\mathcal{Y}))-C_2\\
&>\frac{A}{K_2}d_{\mathcal{X}}(x,y)-C_2\\
&=\frac{K_2(C_2+C_1+2K_1)}{K_2}d_{\mathcal{X}}(x,y)-C_2\\
&=(C_2+C_1+2K_1)\cdot d_{\mathcal{X}}(x,y)-C_2\\
&=2K_1\cdot d_{\mathcal{X}}(x,y)+(C_2+C_1)\cdot d_{\mathcal{X}}(x,y)-C_2\\
&\geq 2K_1\cdot d_{\mathcal{X}}(x,y)+(C_2+C_1)-C_2\\
&= 2K_1\cdot d_{\mathcal{X}}(x,y)+C_1\\
&\geq \frac{2K_1}{K_1}\sum_{U\in\mathfrak{S}}\{\!\{ d_U(\pi_U(x),\pi_U(y)) \}\!\}_L - C_1+C_1\\
&= 2\sum_{U\in\mathfrak{S}}\{\!\{ d_U(\pi_U(x),\pi_U(y)) \}\!\}_L\\
&\geq \sum_{U\in\mathfrak{S}}\{\!\{ d_U(\pi_U(x),\pi_U(y)) +L\}\!\}_{2L}.
\end{align*}
Therefore there exists some $W\in\mathfrak{S}$ such that
\begin{equation}
\label{double inequality}
d_W(\pi_W(x),\pi_W\circ\mathfrak{g}_{\mathcal{Y}}(x))\geq d_W(\pi_W(x),\pi_W(y))+L.
\end{equation}

First suppose $W=S$. The space $CS$ is $E$-hyperbolic. By the triangle inequality and \eqref{double inequality}, we have
\begin{align*}
d_S(\pi_S(x),\pi_S\circ\gamma(\mathcal{Y}))&\geq d_S(\pi_S(x),p_S\circ\pi_S(x))\\
&\geq d_S(\pi_S(x),\pi_S\circ\mathfrak{g}_{\mathcal{Y}}(x))-d_S(\pi_S\circ\mathfrak{g}_{\mathcal{Y}}(x),p_S\circ\pi_S(x))\\
&\geq  d_S(\pi_S(x),\pi_S\circ\mathfrak{g}_{\mathcal{Y}}(x))-\mu\\
&\geq  d_S(\pi_S(x),\pi_S(y))+L-\mu\\
&>  d_S(\pi_S(x),\pi_S(y))+1.
\end{align*}
Thus any geodesic from $\pi_S(x)$ to $\pi_S(y)$ is disjoint from $\pi_S\circ\gamma(\mathcal{Y})$. Because $\pi_S\circ\gamma(\mathcal{Y})$ is a quasi-isometric embedding in an $E$-hyperbolic space, it is a well known fact in hyperbolic geometry that $d_S(p_S\circ\pi_S(x),p_S\circ\pi_S(y))$ is bounded by a uniform constant depending only on $E$, which we call $Q_1$. It follows from \eqref{4.4Sbound} that condition (3) is satisfied in this case for the constant $D_3^1=K_1Q_1+2\mu K_1+C_1$.

Now suppose instead that $W\ne S$ and

\tikzset{every picture/.style={line width=0.75pt}} 

\begin{figure}
\centering
\begin{tikzpicture}[x=0.75pt,y=0.75pt,yscale=-1,xscale=1]

\draw    (151,378) .. controls (191,348) and (399,405.6) .. (439,375.6) ;
\draw  [dash pattern={on 0.84pt off 2.51pt}]  (144,108.6) .. controls (201,142.6) and (206,212.6) .. (206,237.6) ;
\draw [shift={(206,237.6)}, rotate = 90] [color={rgb, 255:red, 0; green, 0; blue, 0 }  ][fill={rgb, 255:red, 0; green, 0; blue, 0 }  ][line width=0.75]      (0, 0) circle [x radius= 3.35, y radius= 3.35]   ;
\draw [shift={(144,108.6)}, rotate = 30.82] [color={rgb, 255:red, 0; green, 0; blue, 0 }  ][fill={rgb, 255:red, 0; green, 0; blue, 0 }  ][line width=0.75]      (0, 0) circle [x radius= 3.35, y radius= 3.35]   ;
\draw  [dash pattern={on 0.84pt off 2.51pt}]  (414,126.6) .. controls (362,147.6) and (372,238.6) .. (373,238.6) ;
\draw [shift={(373,238.6)}, rotate = 0] [color={rgb, 255:red, 0; green, 0; blue, 0 }  ][fill={rgb, 255:red, 0; green, 0; blue, 0 }  ][line width=0.75]      (0, 0) circle [x radius= 3.35, y radius= 3.35]   ;
\draw [shift={(414,126.6)}, rotate = 158.01] [color={rgb, 255:red, 0; green, 0; blue, 0 }  ][fill={rgb, 255:red, 0; green, 0; blue, 0 }  ][line width=0.75]      (0, 0) circle [x radius= 3.35, y radius= 3.35]   ;
\draw   (256,237.6) .. controls (256,221.42) and (269.12,208.3) .. (285.3,208.3) .. controls (301.48,208.3) and (314.6,221.42) .. (314.6,237.6) .. controls (314.6,253.78) and (301.48,266.9) .. (285.3,266.9) .. controls (269.12,266.9) and (256,253.78) .. (256,237.6) -- cycle ;
\draw  [dash pattern={on 0.84pt off 2.51pt}]  (206,237.6) .. controls (203,297.6) and (194,325.6) .. (151,378) ;
\draw [shift={(151,378)}, rotate = 129.37] [color={rgb, 255:red, 0; green, 0; blue, 0 }  ][fill={rgb, 255:red, 0; green, 0; blue, 0 }  ][line width=0.75]      (0, 0) circle [x radius= 3.35, y radius= 3.35]   ;
\draw [shift={(206,237.6)}, rotate = 92.86] [color={rgb, 255:red, 0; green, 0; blue, 0 }  ][fill={rgb, 255:red, 0; green, 0; blue, 0 }  ][line width=0.75]      (0, 0) circle [x radius= 3.35, y radius= 3.35]   ;
\draw  [dash pattern={on 0.84pt off 2.51pt}]  (373,238.6) .. controls (374,238.6) and (366,321.6) .. (439,375.6) ;
\draw [shift={(439,375.6)}, rotate = 36.49] [color={rgb, 255:red, 0; green, 0; blue, 0 }  ][fill={rgb, 255:red, 0; green, 0; blue, 0 }  ][line width=0.75]      (0, 0) circle [x radius= 3.35, y radius= 3.35]   ;
\draw [shift={(373,238.6)}, rotate = 0] [color={rgb, 255:red, 0; green, 0; blue, 0 }  ][fill={rgb, 255:red, 0; green, 0; blue, 0 }  ][line width=0.75]      (0, 0) circle [x radius= 3.35, y radius= 3.35]   ;
\draw  [dash pattern={on 0.84pt off 2.51pt}]  (206,237.6) -- (256,237.6) ;
\draw  [dash pattern={on 0.84pt off 2.51pt}]  (314.6,237.6) -- (373,238.6) ;

\draw (324,212) node [anchor=north west][inner sep=0.75pt]    {$\leq E$};
\draw (214,212) node [anchor=north west][inner sep=0.75pt]    {$\leq E$};
\draw (270,224) node [anchor=north west][inner sep=0.75pt]    {$\rho _{S}^{W}$};
\draw (268,381) node [anchor=north west][inner sep=0.75pt]    {$\pi _{S}\circ \gamma(\mathcal{Y})$};
\draw (420,389) node [anchor=north west][inner sep=0.75pt]    {$p_{S} \circ \pi _{S}( y)$};
\draw (124,384) node [anchor=north west][inner sep=0.75pt]    {$p_{S} \circ \pi _{S}( x)$};
\draw (387,227) node [anchor=north west][inner sep=0.75pt]    {$z$};
\draw (174,227) node [anchor=north west][inner sep=0.75pt]    {$w$};
\draw (423,94) node [anchor=north west][inner sep=0.75pt]    {$\pi _{S}( y)$};
\draw (121,78) node [anchor=north west][inner sep=0.75pt]    {$\pi _{S}( x)$};

\end{tikzpicture}
\caption{A depiction of $CS$ in the proof of condition (3) that $\gamma(\mathcal{Y})$ is $D'$-contracting.}
\label{4.4Picture}
\end{figure}
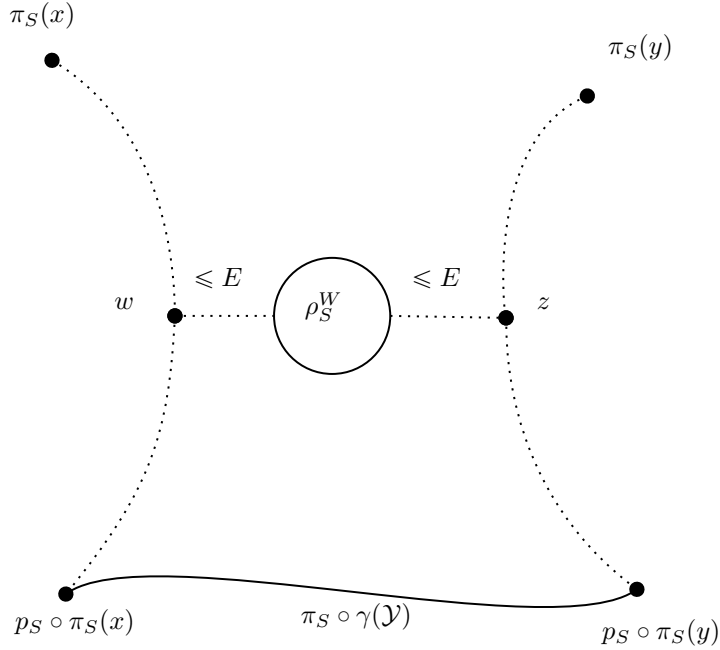

$$d_S(\pi_S(x),p_S\circ\pi_S(x))<d_S(\pi_S(x),\pi_S(y))+L.$$ 
Since $\gamma(\mathcal{Y})$ has $D$-bounded projections, it follows from the triangle inequality that
\begin{align*}
d_W(\pi_W(y),\pi_W\circ\mathfrak{g}_{\mathcal{Y}}(y))\geq& d_W(\pi_W(x),\pi_W\circ\mathfrak{g}_{\mathcal{Y}}(x))\\
&- d_W(\pi_W(x),\pi_W(y))\\
&-d_W(\pi_W\circ\mathfrak{g}_{\mathcal{Y}}(x),\pi_W\circ\mathfrak{g}_{\mathcal{Y}}(y))\\
\geq & d_W(\pi_W(x),\pi_W(y)) + L - d_W(\pi_W(x),\pi_W(y))-D\\
=& L-D\\
\geq& E+\mu.
\end{align*}
Additionally, by \eqref{double inequality},
$$ d_W(\pi_W(x),\pi_W\circ\mathfrak{g}_{\mathcal{Y}}(x))\geq d_W(\pi_W(x),\pi_W(y))+L\geq L\geq E+\mu.$$
Because both $d_W(\pi_W(x),\pi_W\circ\mathfrak{g}_{\mathcal{Y}}(x))\geq E+\mu$ and $d_W(\pi_W(y),\pi_W\circ\mathfrak{g}_{\mathcal{Y}}(y))\geq E+\mu$, the bounded geodesic image axiom for $(\mathcal{X},\mathfrak{S})$ implies that any geodesic between $\pi_S(x)$ and $p_S\circ\pi_S(x)$ (as well as between $\pi_S(y)$ and $p_S\circ\pi_S(y)$) must intersect the $E$-neighborhood of $\rho^W_S$. Let
$$ w\in [\pi_S(x),p_S\circ\pi_S(x)]\cap\mathcal{N}_E(\rho^W_S) \qquad\text{and}\qquad z\in [\pi_S(y),p_S\circ\pi_S(y)]\cap\mathcal{N}_E(\rho^W_S),$$
as seen in Figure \ref{4.4Picture}. Thus $d_S(w,z)\leq 3E$.  By definition, $p_S\circ\pi_S(x)=p_S(w)$ and $p_S\circ\pi_S(y)=p_S(z)$. If $d_S(w,p_S\circ\pi_S(x))>3E$, then again $d_S(p_S\circ\pi_S(x),p_S\circ\pi_S(y))$ is bounded by some uniform constant depending only on $E$, which we call $Q_2$. It follows from \eqref{4.4Sbound} that condition (3) is satisfied in this case for the constant $D^2_3=K_1Q_2+2\mu K_1+C_1$.

Suppose instead that $d_S(w,p_S\circ\pi_S(x))\leq 3E$. Since $p_S$ is a nearest point projection,
$$d_S(z,p_S\circ\pi_S(y)) \leq d_S(z,w)+d_S(w,p_S\circ\pi_S(x))\leq 6E.$$
Therefore, 
$$ d_S(p_S\circ\pi_S(x),p_S\circ\pi_S(y))\leq d_S(p_S\circ\pi_S(x),w)+d_S(w,z)+d_S(z,p_S\circ\pi_S(y))\leq 12E.$$
Using \eqref{4.4Sbound} completes the proof of the final case of condition (3) for the constant $D^3_3=(12E+2\mu)K_1+C_1$. By taking $D'=\max\{D_1,D_2,D_3^1,D_3^2,D_3^3\}$, we have shown that if $\gamma(\mathcal{Y})$ has $D$-bounded projections, then $\gamma(\mathcal{Y})$ is $D'$-contracting.

\begin{figure}[h!]
    \centering

\begin{tikzpicture}[x=0.75pt,y=0.75pt,yscale=-1,xscale=1]

\draw    (93,325.6) -- (474,325.6) ;
\draw    (93,325.6) -- (93,392.6) ;
\draw    (474,325.6) -- (474,397.6) ;
\draw    (94,180.6) .. controls (118,111.6) and (120,207.6) .. (171,163.6) ;
\draw [shift={(94,180.6)}, rotate = 289.18] [color={rgb, 255:red, 0; green, 0; blue, 0 }  ][fill={rgb, 255:red, 0; green, 0; blue, 0 }  ][line width=0.75]      (0, 0) circle [x radius= 3.35, y radius= 3.35]   ;
\draw    (171,163.6) .. controls (199,106.6) and (325,209.6) .. (366,160.6) ;
\draw [shift={(171,163.6)}, rotate = 296.16] [color={rgb, 255:red, 0; green, 0; blue, 0 }  ][fill={rgb, 255:red, 0; green, 0; blue, 0 }  ][line width=0.75]      (0, 0) circle [x radius= 3.35, y radius= 3.35]   ;
\draw    (366,160.6) .. controls (435,140.6) and (457,182.6) .. (483,175.6) ;
\draw [shift={(483,175.6)}, rotate = 344.93] [color={rgb, 255:red, 0; green, 0; blue, 0 }  ][fill={rgb, 255:red, 0; green, 0; blue, 0 }  ][line width=0.75]      (0, 0) circle [x radius= 3.35, y radius= 3.35]   ;
\draw [shift={(366,160.6)}, rotate = 343.84] [color={rgb, 255:red, 0; green, 0; blue, 0 }  ][fill={rgb, 255:red, 0; green, 0; blue, 0 }  ][line width=0.75]      (0, 0) circle [x radius= 3.35, y radius= 3.35]   ;
\draw [color={rgb, 255:red, 208; green, 2; blue, 27 }  ,draw opacity=1 ]   (166,244.6) .. controls (209,289.6) and (382,278.6) .. (400,251.6) ;
\draw    (94,180.6) .. controls (100,235.6) and (150,228.6) .. (166,244.6) ;
\draw [shift={(166,244.6)}, rotate = 45] [color={rgb, 255:red, 0; green, 0; blue, 0 }  ][fill={rgb, 255:red, 0; green, 0; blue, 0 }  ][line width=0.75]      (0, 0) circle [x radius= 3.35, y radius= 3.35]   ;
\draw    (400,251.6) .. controls (454,189.6) and (508,272.6) .. (483,175.6) ;
\draw [shift={(400,251.6)}, rotate = 311.05] [color={rgb, 255:red, 0; green, 0; blue, 0 }  ][fill={rgb, 255:red, 0; green, 0; blue, 0 }  ][line width=0.75]      (0, 0) circle [x radius= 3.35, y radius= 3.35]   ;
\draw  [dash pattern={on 4.5pt off 4.5pt}]  (93,325.6) .. controls (88,225.6) and (273,225.6) .. (302,226.6) .. controls (331,227.6) and (438,240.6) .. (474,325.6) ;
\draw  [dash pattern={on 0.84pt off 2.51pt}]  (295,227.6) -- (310,326.6) ;
\draw [color={rgb, 255:red, 90; green, 74; blue, 226 }  ,draw opacity=1 ]   (171,163.6) -- (166,244.6) ;
\draw [color={rgb, 255:red, 90; green, 74; blue, 226 }  ,draw opacity=1 ]   (366,160.6) -- (400,251.6) ;

\draw (298,347) node [anchor=north west][inner sep=0.75pt]    {$\mathbf{P}_{U}{}_{_{i}}$};
\draw (75,165) node [anchor=north west][inner sep=0.75pt]    {$x_{i}$};
\draw (490,160) node [anchor=north west][inner sep=0.75pt]    {$y_{i}$};
\draw (154,145) node [anchor=north west][inner sep=0.75pt]    {$w_{i}$};
\draw (358,140) node [anchor=north west][inner sep=0.75pt]    {$z_{i}$};
\draw (155,255) node [anchor=north west][inner sep=0.75pt]    {$a_{i}$};
\draw (396,259) node [anchor=north west][inner sep=0.75pt]    {$b_{i}$};
\draw (348,275) node [anchor=north west][inner sep=0.75pt]    {$\beta _{i}$};
\draw (480,230) node [anchor=north west][inner sep=0.75pt]    {$\alpha _{i}$};
\draw (433,145) node [anchor=north west][inner sep=0.75pt]    {$\gamma _{i}$};
\draw (143,196) node [anchor=north west][inner sep=0.75pt]    {$\zeta _{i}$};
\draw (358,194) node [anchor=north west][inner sep=0.75pt]    {$\xi _{i}$};
\draw (288,300) node [anchor=north west][inner sep=0.75pt]    {$\nu _{\lambda }$};
\draw (175,194) node [anchor=north west][inner sep=0.75pt]    {$\leq M'( \lambda ,\lambda )$};
\draw (384,193.1) node [anchor=north west][inner sep=0.75pt]    {$\leq M'( \lambda ,\lambda )$};

\end{tikzpicture}

    \caption{A depiction of $\mathcal{X}$ in the proof that $\gamma(\mathcal{Y})$ has $D'$-bounded projections.}
    \label{4.4imagept2}
\end{figure}
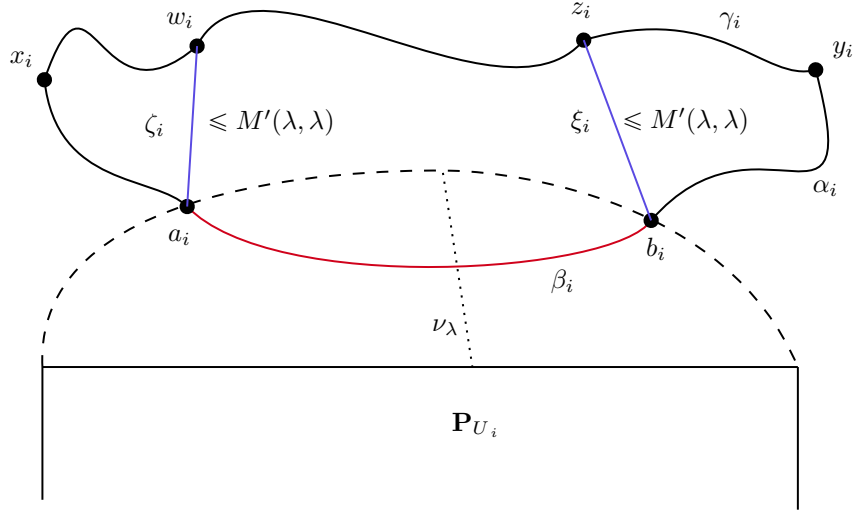

\medskip

We are now ready to prove the ``moreover" statement of the theorem. Let $(\mathcal{X},\mathfrak{S})$ be an $E$-relatively hierarchically hyperbolic space with $|\mathfrak{S}|>1$, the bounded domain dichotomy, clean containers, and unbounded minimal products. By Theorem \ref{clean containers ABD}, we obtain a new structure $(\mathcal{X},\mathfrak{T})$ which has relatively unbounded products, where $\mathfrak{T}=\{S\}\sqcup\text{Unb}\sqcup\text{Min}$. Because $(\mathcal{X},\mathfrak{S})$ has unbounded minimal products, Min is empty, so in fact $(\mathcal{X},\mathfrak{T})$ has unbounded products. 

Suppose towards contradiction that $\gamma(\mathcal{Y})$ does not have $D'$-bounded projections for any $D'$. Then there exists a sequence of domains $U_i\in\mathfrak{T}-\{S\}$ such that diam$(\pi_{U_i}\circ\gamma(\mathcal{Y}))\to\infty$ as $i\to\infty$. Choose a sequence of pairs of points $x_i,y_i\in\gamma(\mathcal{Y})$ such that $d_{U_i}(\pi_{U_i}(x_i),\pi_{U_i}(y_i))=K_i$, where $K_i\to\infty$ as $i\to\infty$. Let $\lambda$ be sufficiently large such that Propositions \ref{active subpaths} and \ref{hpaths} hold. Then there exists a $(\lambda,\lambda)$-hierarchy path $\alpha_i$ connecting each pair $x_i,y_i$. The path $\alpha_i$ is a $(\lambda,\lambda)$-quasi-geodesic with endpoints on the image of the $(M;K,K)$-stable embedding $\gamma$, so it is contained in the $M'(\lambda,\lambda)$-neighborhood of $\gamma(\mathcal{Y})$ by Lemma \ref{stable is morse}, where $M'$ depends only on $M$ and $K$.

By Proposition \ref{active subpaths}, there exists a subpath $\beta_i\subset\alpha_i$ such that $\beta_i\subset\mathcal{N}_{\nu_{\lambda}}(\mathbf{P}_{U_i})$. Moreover, by the third bullet point in Proposition \ref{active subpaths},
$$\text{diam}_{U_i}(\pi_{U_i}(\beta_i))\geq d_{U_i}(\pi_{U_i}(x_i),\pi_{U_i}(y_i))-24(\lambda E+E)\geq K_i-24(\lambda E+E).$$
Additionally, since the projection maps $\pi_{U_i}$ are $(E,E)$-coarsely Lipshitz, we have
$$ \text{diam}_{\mathcal{X}}(\beta_i)\geq \frac{1}{E}\text{diam}_{U_i}(\pi_{U_i}(\beta_i))-E\geq \frac{K_i}{E}-24(\lambda+1)-E.$$
Thus, $\text{diam}_{\mathcal{X}}(\beta_i)\to\infty$ as $i\to\infty$, and hence there exist points $a_i,b_i\in\beta_i$ such that $d_{\mathcal{X}}(a_i,b_i)\to\infty$ as $i\to\infty$. Let $w_i,z_i\in\gamma(\mathcal{Y})$ be points such that $d_{\mathcal{X}}(a_i,w_i)\leq M'(\lambda,\lambda)$ and $d_{\mathcal{X}}(b_i,z_i)\leq M'(\lambda,\lambda)$. By the triangle inequality,
$$ d_{\mathcal{X}}(w_i,z_i) \geq d_{\mathcal{X}}(a_i,b_i)-2M'(\lambda,\lambda),$$
so $d_{\mathcal{X}}(w_i,z_i)\to\infty$ as $i\to\infty$. Let $\zeta_i=[w_i,a_i]$ and $\xi_i=[z_i,b_i]$ be geodesics in $\mathcal{X}$. The concatenated path $\zeta_i\ast\beta_i|_{[a_i,b_i]}\ast\xi_i$ is a $(\lambda,\lambda+2M'(\lambda,\lambda))$-quasi-geodesic with endpoints in $\gamma(\mathcal{Y})$. Lemma \ref{stable is morse} implies there is an $(M';K,K)$-Morse quasi-geodesic $\eta_i\subset\gamma(\mathcal{Y})$ contained in the $M'(\lambda,\lambda+2M'(\lambda,\lambda))$-neighborhood of $\zeta_i\ast\beta_i|_{[a_i,b_i]}\ast\xi_i$ and therefore in the $(M'(\lambda,\lambda+2M'(\lambda,\lambda))+M'(\lambda,\lambda)+\nu_{\lambda})$-neighborhood of $\mathbf{P}_{U_i}$. 

Because $d_{\mathcal{X}}(w_i,z_i)\to\infty$ as $i\to\infty$, we have shown that the arbitrarily long $(M';K,K)$-Morse quasi-geodesics $\eta_i$ are uniformly close to a direct product with unbounded factors. Such a direct product is uniformly Morse limited by \cite[Theorem A.3]{drutu2025weakmorsepropertiesspaces}, and so this is a contradiction, proving the ``moreover" statement and concluding the proof of the theorem.
\end{proof}

\section{Stability in a Relative HHS}
\label{gp}

The main goal of this section is to show that in many cases, the top level space $CS$ associated to the maximized relative HHS structure produced by Theorem \ref{clean containers ABD} is a Morse detectability space. To prove that $CS$ is a Morse detectability space, we must show that quasi-isometric embeddings in $\mathcal{X}$ project to quasi-isometric embeddings in $CS$ if and only if they are stable embeddings. 

The next lemma shows that projecting to a quasi-isometric embedding in $CS$ and having bounded projections are equivalent in a relative HHS. Note that there are no additional assumptions on the relative HHS such as the bounded domain dichotomy, clean containers, or unbounded products.

\begin{lemma}
\label{bounded proj iff CS qgeo}
Let $(\mathcal{X},\mathfrak{S})$ be an relative HHS and let $\gamma\colon\mathcal{Y}\to\mathcal{X}$ be a quasi-isometric embedding. The projection  $\pi_S\circ\gamma$ is a quasi-isometric embedding into $CS$ if and only if $\gamma(\mathcal{Y})$ has bounded projections.
\end{lemma}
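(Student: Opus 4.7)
The plan is to treat the two directions separately: $(\Leftarrow)$ is a direct consequence of the distance formula, while $(\Rightarrow)$ is a contradiction argument that leverages the active subpath result, Proposition~\ref{active subpaths}.

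For the $(\Leftarrow)$ direction, suppose $\gamma(\mathcal{Y})$ has $D$-bounded projections. Apply Theorem~\ref{distance formula} with any threshold $s\geq\max\{s_0,D+1\}$, so that for every pair $a,b\in\mathcal{Y}$ each summand indexed by $U\in\mathfrak{S}-\{S\}$ vanishes. The distance formula then reduces to $d_{\mathcal{X}}(\gamma(a),\gamma(b))\asymp \{\!\{d_S(\pi_S(\gamma(a)),\pi_S(\gamma(b)))\}\!\}_s$, so $d_{\mathcal{X}}$ and $d_S$ are comparable up to uniform multiplicative and additive constants on $\gamma(\mathcal{Y})$. Combined with the hypothesis that $\gamma$ is a quasi-isometric embedding, this shows $\pi_S\circ\gamma$ is a quasi-isometric embedding.

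For the $(\Rightarrow)$ direction, proceed by contradiction. If $\gamma(\mathcal{Y})$ did not have bounded projections, one extracts sequences $U_n\in\mathfrak{S}-\{S\}$ and $x_n,y_n\in\gamma(\mathcal{Y})$ with $K_n:=d_{U_n}(\pi_{U_n}(x_n),\pi_{U_n}(y_n))\to\infty$. Fixing $\lambda$ large enough for Propositions~\ref{hpaths} and \ref{active subpaths}, let $\alpha_n$ be a $(\lambda,\lambda)$-hierarchy path from $x_n$ to $y_n$, and for $K_n>200\lambda E$ let $\beta_n\subset\alpha_n$ be the active subpath given by Proposition~\ref{active subpaths}, with $\beta_n\subset\mathcal{N}_{\nu_\lambda}(\mathbf{P}_{U_n})$ and $\mathrm{diam}_{U_n}(\pi_{U_n}(\beta_n))\geq K_n-24(\lambda E+E)$. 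Denoting the endpoints of $\beta_n$ by $a_n,b_n$, two key facts hold: since $\pi_{U_n}$ is coarsely Lipschitz, $d_{\mathcal{X}}(a_n,b_n)\to\infty$; and since Proposition~\ref{russell gates} forces $\pi_S(\mathbf{P}_{U_n})$ into a uniform neighborhood of $\rho^{U_n}_S$ (which itself has diameter at most $E$), the set $\pi_S(\beta_n)$ has uniformly bounded diameter, yielding $d_S(\pi_S(a_n),\pi_S(b_n))\leq B$ for some uniform $B$.

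To close the argument, one uses that $\pi_S\circ\gamma(\mathcal{Y})$ is stable and hence quasi-convex in the hyperbolic space $CS$ (Lemma~\ref{qi hyp stable}, with the trivial case $|\mathfrak{S}|=1$ handled separately since then $CS$ is coarsely $\mathcal{X}$ by the uniqueness axiom and bounded projections is vacuous). The quasi-geodesic $\pi_S(\alpha_n)$ therefore lies in a uniform $R$-neighborhood of $\pi_S\circ\gamma(\mathcal{Y})$, producing points $\tilde a_n,\tilde b_n\in\gamma(\mathcal{Y})$ whose $\pi_S$-images are within $R$ of $\pi_S(a_n),\pi_S(b_n)$. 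The quasi-isometric embedding hypothesis on $\pi_S\circ\gamma$, combined with $\gamma$ being a quasi-isometric embedding, then bounds $d_{\mathcal{X}}(\tilde a_n,\tilde b_n)$ uniformly. Applying the distance formula to $(\tilde a_n,\tilde b_n)$ in tandem with the second bullet of Proposition~\ref{active subpaths}, which pins $\pi_{U_n}(x_n)$ close to $\pi_{U_n}(a_n)$ and $\pi_{U_n}(y_n)$ close to $\pi_{U_n}(b_n)$, forces $K_n$ to be uniformly bounded, contradicting $K_n\to\infty$.

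The main obstacle is this final comparison: converting closeness of $\tilde a_n$ to $a_n$ in $CS$ into enough control on $\pi_{U_n}(\tilde a_n)$ versus $\pi_{U_n}(a_n)$ to contradict the growth of $K_n$. This is where the bounded geodesic image axiom and the distance formula must be combined carefully with the detailed information the active subpath proposition provides about where the projection $\pi_{U_n}$ is concentrated along $\alpha_n$.
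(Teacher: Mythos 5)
The $(\Leftarrow)$ direction is correct and matches the paper's argument (distance formula with a threshold exceeding the projection bound; when the single surviving term $d_S$ is below the threshold, both $d_S$ and $d_{\mathcal{X}}$ are uniformly small, and the uniqueness axiom implicit in the distance formula handles this case).

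The $(\Rightarrow)$ direction has a genuine gap, and it is exactly where you suspected. After establishing $d_{\mathcal{X}}(\tilde a_n,\tilde b_n)$ uniformly bounded (hence $d_{U_n}(\pi_{U_n}(\tilde a_n),\pi_{U_n}(\tilde b_n))$ bounded), you still need $d_{U_n}(\pi_{U_n}(x_n),\pi_{U_n}(\tilde a_n))$ and $d_{U_n}(\pi_{U_n}(y_n),\pi_{U_n}(\tilde b_n))$ to be bounded. The second bullet of Proposition~\ref{active subpaths} controls $\pi_{U_n}$ along $\alpha_n$ outside $\beta_n$, but says nothing about $\tilde a_n$, which is not on $\alpha_n$ and is only known to be close to $a_n$ in $CS$. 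Worse, your chosen auxiliary point $\tilde a_n$ is, by construction, $R$-close to $\pi_S(a_n)$, and $\pi_S(a_n)$ is (as you note) uniformly close to $\rho^{U_n}_S$; so $\pi_S(\tilde a_n)$ is \emph{also} uniformly close to $\rho^{U_n}_S$. Consequently any $CS$-geodesic from $\pi_S(x_n)$ to $\pi_S(\tilde a_n)$ terminates inside a bounded neighborhood of $\rho^{U_n}_S$, and the bounded geodesic image axiom gives no conclusion — it cannot certify that $d_{U_n}(\pi_{U_n}(x_n),\pi_{U_n}(\tilde a_n))$ is small precisely because the geodesic \emph{does} visit $\mathcal{N}_E(\rho^{U_n}_S)$.

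The paper avoids this by not using active subpaths at all for this direction; it works directly with $x$, $y$, a $CS$-geodesic $\alpha$ between $\pi_S(x)$ and $\pi_S(y)$, and the set $A=\alpha\cap\mathcal{N}_E(\rho^U_S)$. It then deliberately chooses auxiliary points $z',w'\in\alpha$ (and corresponding $a,b\in\gamma(\mathcal{Y})$ with $\pi_S$-images $M(k,c)$-close to $z',w'$) that lie a definite distance $M(k,c)+3E+1$ \emph{short of} $A$ on either side. This offset is what makes the triangle-inequality contradiction go through: any $CS$-geodesic from $\pi_S(x)$ to $\pi_S(a)$ entering $\mathcal{N}_E(\rho^U_S)$ would force $d_S(\pi_S(x),z)$ to decrease by at least $1$, which is absurd. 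So BGI then yields $d_U(\pi_U(x),\pi_U(a))<E$. To repair your argument you would need to replace $\tilde a_n$ by a point of $\gamma(\mathcal{Y})$ whose $\pi_S$-image sits at a controlled positive distance \emph{before} the entry point of $\pi_S(\alpha_n)$ into $\mathcal{N}_E(\rho^{U_n}_S)$ (not at $\pi_S(a_n)$ itself), and then run the BGI argument as in the paper; but at that point the active-subpath machinery and the contradiction framing are doing no work and you have recovered the paper's direct proof with extra overhead. Note also that your closing step implicitly treats $\mathcal{Y}$ as geodesic so that Lemma~\ref{stable is morse} applies to $\pi_S\circ\gamma$; the paper shares this implicit assumption, so it is not a distinguishing defect, but it is worth being explicit.
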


\begin{proof}
Let $(\mathcal{X},\mathfrak{S})$ be an $E$-relative HHS. The statement is vacuously true if $|\mathfrak{S}|=1$, so suppose $|\mathfrak{S}|>1$, and in particular, $CS$ is hyperbolic. Let $\gamma\colon\mathcal{Y}\to\mathcal{X}$ be a $(\lambda,\varepsilon)$-quasi-isometric embedding. For the first direction, suppose $\gamma(\mathcal{Y})$ has $D$-bounded projections. If $s_0$ is the minimum distance formula threshold for $(\mathcal{X},\mathfrak{S})$ from Theorem \ref{distance formula}, then there exist constants $K,C$ such that for any $x,y\in\gamma$
$$ d_{\mathcal{X}}(x,y) \asymp_{K,C} \sum_{U\in\mathfrak{S}}\{\!\{d_U(\pi_U(x),\pi_U(y))\}\!\}_{s_0+D+1} = \{\!\{d_S(\pi_S(x),\pi_S(y))\}\!\}_{s_0+D+1}.$$
Fix two points $t_1,t_2\in \mathcal{Y}$. If $d_S(\pi_S\circ\gamma(t_1),\pi_S\circ\gamma(t_2))\geq s_0+D+1$, then
$$ d_S(\pi_S\circ\gamma(t_1),\pi_S\circ\gamma(t_2))\asymp_{K,C} d_{\mathcal{X}}(\gamma(t_1),\gamma(t_2))\asymp_{\lambda,\varepsilon} d_{\mathcal{Y}}(t_1,t_2).$$
If $d_S(\pi_S\circ\gamma(t_1),\pi_S\circ\gamma(t_2))< s_0+D+1$, then clearly 
$$ d_S(\pi_S\circ\gamma(t_1),\pi_S\circ\gamma(t_2))\leq d_{\mathcal{Y}}(t_1,t_2)+s_0+D+1.$$
For the other quasi-geodesic inequality, first note there exists a uniqueness function $\theta$ associated to $(\mathcal{X},\mathfrak{S})$. If $d_{\mathcal{X}}(\gamma(t_1),\gamma(t_2))\geq \theta(s_0+D+1)$, then the uniqueness axiom implies there exists some domain $W\in\mathfrak{S}$ such that 
$$ d_W(\pi_W\circ\gamma(t_1),\pi_W\circ\gamma(t_2))\geq s_0+D+1.$$
However, this is a contradiction by the assumption that $d_S(\pi_S\circ\gamma(t_1),\pi_S\circ\gamma(t_2))< s_0+D+1$ and the fact that $\gamma$ has $D$-bounded projections. We then have that
$$ \frac{1}{\lambda} d_{\mathcal{Y}}(t_1,t_2)-\varepsilon-\theta(s_0+D+1)\leq d_{\mathcal{X}}(\gamma(t_1),\gamma(t_2))-\theta(s_0+D+1)\leq d_S(\pi_S\circ\gamma(t_1),\pi_S\circ\gamma(t_2)),$$
so by taking $k,c$ to be the maximum of the respective constants in the above inequalities, $\pi_S\circ\gamma$ is a $(k,c)$-quasi-isometric embedding for $k,c$ depending on $\lambda$, $\varepsilon$, and the hierarchy constants of $(\mathcal{X},\mathfrak{S})$.

For the reverse direction, let $\pi_S\circ\gamma$ be a $(k,c)$-quasi-isometric embedding into $CS$. We will show that $\gamma(\mathcal{Y})$ has $D$-bounded projections for the constant
$$ D = E(\lambda k(4M(k,c)+9E+4+c)+\varepsilon)+3E,$$
where $M$ is the Morse gauge of geodesics in an $E$-hyperbolic space. Fix any two points $x,y\in\gamma(\mathcal{Y})$ and any domain $U\in\mathfrak{S}-\{S\}$. If $d_U(\pi_U(x),\pi_U(y))\leq E$ then we are done, so suppose instead that $d_U(\pi_U(x),\pi_U(y))>E$. 

Let $\alpha$ be a $CS$-geodesic from $\pi_S(x)$ to $\pi_S(y)$. The bounded geodesic image axiom implies that $\alpha$ must intersect the $E$-neighborhood of $\rho^U_S$. Let $A\subseteq\alpha$ be the set of points in the $E$-neighborhood of $\rho^U_S$. Observe that $\text{diam}(A)\leq 3E$ because the diameter of $\rho^U_S$ is at most $E$. There are now three cases to consider, depending on the sizes of $d_S(\pi_S(x),A)$ and $d_S(\pi_S(y),A)$.

\textbf{Case 1:} Let $d_S(\pi_S(x),A)\leq M(k,c)+3E+2$ and $d_S(\pi_S(y),A)\leq M(k,c)+3E+2$. Then the triangle inequality implies
$$d_S(\pi_S(x),\pi_S(y))\leq d_S(\pi_S(x),A)+\text{diam}(A)+d_S(A,\pi_S(y)) \leq 2M(k,c)+9E+4.$$
However, $\pi_S\circ\gamma$ is a $(k,c)$-quasi-isometric embedding and $\gamma$ is a $(\lambda,\varepsilon)$-quasi-isometric embedding. Letting $s_1,s_2\in\mathcal{Y}$ be such that $\gamma(s_1)=x$ and $\gamma(s_2)=y$, it follows from $\pi_U$ being $(E,E)$-coarsely Lipschitz that
\begin{align*}
d_U(\pi_U(x),\pi_U(y))&\leq E\cdot d_{\mathcal{X}}(x,y)+E\\
&\leq E(\lambda\cdot d_{\mathcal{Y}}(s_1,s_2)+\varepsilon)+E\\
&\leq E(\lambda k(d_S(\pi_S(x),\pi_S(y))+c)+\varepsilon)+E\\
&\leq E(\lambda k(2M(k,c)+9E+4+c)+\varepsilon)+E\\
&\leq D,
\end{align*}
completing the first case.

\textbf{Case 2:} Suppose $d_S(\pi_S(x),A)>M(k,c)+3E+2$ and $d_S(\pi_S(y),A)>M(k,c)+3E+2$. Choose a point $z\in A$ such that $d_S(\pi_S(x),z)< d_S(\pi_S(x),A)+0.5$. Let $z'\in\alpha$ be the point such that
\begin{equation}
\label{final graph prod eq}
d_S(\pi_S(x),z')=d_S(\pi_S(x),z)-M(k,c)-3E-1,
\end{equation}
which exists because $d_S(\pi_S(x),A)>M(k,c)+3E+2$. Similarly, choose a point $w\in A$ such that $d_S(\pi_S(y),w)< d_S(\pi_S(y),A)+0.5$, and let $w'\in\alpha$ be the point such that $d_S(\pi_S(y),w')= d_S(\pi_S(y),w)-M(k,c)-3E-1$.

Because $\pi_S\circ\gamma$ is a $(k,c)$-quasi-isometric embedding into the $E$-hyperbolic space $CS$, Lemma \ref{qi hyp stable} implies that $\pi_S\circ\gamma$ is a $(M';k,c)$-stable embedding for some $M'$ depending on $k$, $c$, and $E$. From Lemma \ref{stable is morse} there exists and $(M'';k,c)$-Morse quasi-geodesic $\eta\subset\pi_S\circ\gamma(\mathcal{Y})$ with endpoints on $\alpha$, where $M''$ depends on $M'$, $k$, and $c$. Then because $\alpha$ is a geodesic in an $E$-hyperbolic space, there exist points $a,b\in\gamma(\mathcal{Y})$ such that $d_S(\pi_S(a),z')\leq M(k,c)$ and $d_S(\pi_S(b),w')\leq M(k,c)$. Consider a $CS$-geodesic from $\pi_S(a)$ to $\pi_S(x)$. Suppose towards contradiction that there exists a point $q$ along this geodesic which is contained in the $E$-neighborhood of $\rho^U_S$. Then $d_S(q,z)\leq 3E$. Using \eqref{final graph prod eq}, we have
\begin{align*}
d_S(\pi_S(x),z) &\leq d_S(\pi_S(x),q)+d_S(q,z)\\
&\leq d_S(\pi_S(x),\pi_S(a))+3E\\
&\leq d_S(\pi_S(x),z')+d_S(\pi_S(a),z')+3E\\
&\leq d_S(\pi_S(x),z')+M(k,c)+3E\\
&=d_S(\pi_S(x),z)-M(k,c)-3E-1+M(k,c)+3E\\
&=d_S(\pi_S(x),z)-1,
\end{align*}
which is a contradiction. Therefore, no $CS$-geodesic from $\pi_S(x)$ to $\pi_S(a)$ intersects the $E$-neighborhood of $\rho^U_S$, and so the bounded geodesic image axiom implies $d_U(\pi_U(x),\pi_U(a))<E$. An identical argument implies $d_U(\pi_U(y),\pi_U(b))<E$. By the triangle inequality,
\begin{align*}
d_S(\pi_S(a),\pi_S(b))&\leq d_S(\pi_S(a),z')+d_S(z',z)+\text{diam}(A)+d_S(w,w')+d_S(w',\pi_S(b))\\
&\leq M(k,c)+(M(k,c)+3E+1)+3E+(M(k,c)+3E+1)+M(k,c)\\
&= 4M(k,c)+9E+2.
\end{align*}
Let $t_1,t_2\in\mathcal{Y}$ be such that $\gamma(t_1)=a$ and $\gamma(t_2)=b$. Because $\pi_U$ is $(E,E)$-coarsely Lipschitz, the map $\gamma$ is a $(\lambda,\varepsilon)$-quasi-isometric embedding, and $\pi_S\circ\gamma$ is a $(k,c)$-quasi-isometric embedding, we have that
\begin{align*}
d_U(\pi_U(x),\pi_U(y))&\leq d_U(\pi_U(x),\pi_U(a))+d_U(\pi_U(a),\pi_U(b))+d_U(\pi_U(y),\pi_U(b))\\
&\leq d_U(\pi_U(a),\pi_U(b))+2E\\
&\leq E\cdot d_{\mathcal{X}}(a,b)+E+2E\\
&\leq E(\lambda\cdot d_{\mathcal{Y}}(t_1,t_2)+\varepsilon)+3E\\
&\leq E(\lambda k(d_S(\pi_S(a),\pi_S(b))+c)+\varepsilon)+3E\\
&\leq E(\lambda k(4M(k,c)+9E+2+c)+\varepsilon)+3E\\
&\leq D,
\end{align*}
completing the second case.

\textbf{Case 3:} Suppose only one of $d_S(\pi_S(x),A)$ or $d_S(\pi_S(y),A)$ is greater than $M(k,c)+3E+2$. Without loss of generality, let $d_S(\pi_S(x),A)>M(k,c)+3E+2$. Following the argument in case (2), construct the points $z,z'\in \alpha$ and $a\in\gamma(\mathcal{Y})$. By the triangle inequality,
\begin{align*}
d_S(\pi_S(a),\pi_S(y))&\leq d_S(\pi_S(a),z')+d_S(z',z)+\text{diam}(A)+d_S(A,\pi_S(y))\\
&\leq M(k,c)+(M(k,c)+3E+1)+3E+(M(k,c)+3E+2)\\
&=3M(k,c)+9E+3.
\end{align*}
Again let $t_1,t_2\in\mathcal{Y}$ be such that $\gamma(t_1)=a$ and $\gamma(t_2)=y$. As in Case 2, we have
\begin{align*}
d_U(\pi_U(x),\pi_U(y))&\leq d_U(\pi_U(x),\pi_U(a))+d_U(\pi_U(a),\pi_U(y))\\
&\leq E+d_U(\pi_U(a),\pi_U(y))\\
&\leq E+E\cdot d_{\mathcal{X}}(a,y)+E\\
&\leq E(\lambda\cdot d_{\mathcal{Y}}(t_1,t_2)+\varepsilon)+2E\\
&\leq E(\lambda k(d_S(\pi_S(a),\pi_S(y))+c)+\varepsilon)+2E\\
&\leq E(\lambda k(3M(k,c)+9E+3+c)+\varepsilon)+2E\\
&\leq D,
\end{align*}
completing the third case. Therefore, in any case, $\gamma$ has $D$-bounded projections.
\end{proof}

The following result brings together Theorem \ref{abd4.4} and Lemma \ref{bounded proj iff CS qgeo} to show that given the right initial conditions on a relative HHS $\mathcal{X}$, the top level space of the maximized relative HHS structure from Theorem \ref{clean containers ABD} is a Morse detectability space for $\mathcal{X}$, so $\mathcal{X}$ is Morse local-to-global.

\begin{thm}
\label{main thm}
Let $\mathcal{X}$ be a geodesic metric space and let $(\mathcal{X},\mathfrak{S})$ be a relative HHS with $|\mathfrak{S}|>1$, clean containers, the bounded domain dichotomy, and unbounded minimal products. A quasi-isometric embedding $\gamma:\mathcal{Y}\to\mathcal{X}$ is a stable embedding if and only if $\pi_S\circ\gamma$ is a quasi-isometric embedding into $\mathcal{T}_S$, where $\mathcal{T}_S$ is the top level space of the maximized structure.
\end{thm}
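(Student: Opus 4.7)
The plan is to chain together Theorem \ref{clean containers ABD}, the ``moreover'' clause of Theorem \ref{abd4.4}, and Lemma \ref{bounded proj iff CS qgeo}, applied to the maximized structure. The setup works because our hypotheses (clean containers, bounded domain dichotomy, unbounded minimal products) are exactly what Theorem \ref{clean containers ABD} plus the ``moreover'' of Theorem \ref{abd4.4} require: the maximization produces a relative HHS $(\mathcal{X},\mathfrak{T})$ with genuine (not just relative) unbounded products, since the unbounded minimal products hypothesis kills the $\mathrm{Min}$ piece of $\mathfrak{T}$. The top level space of this new structure is the hyperbolic space $\mathcal{T}_S$ that appears in the theorem statement.

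For the forward direction, suppose $\gamma\colon\mathcal{Y}\to\mathcal{X}$ is an $(M;K,K)$-stable embedding. The ``moreover'' clause of Theorem \ref{abd4.4} applied to $(\mathcal{X},\mathfrak{S})$ produces a constant $D'$ so that $\gamma(\mathcal{Y})$ has $D'$-bounded projections in the maximized structure $(\mathcal{X},\mathfrak{T})$. Now I apply Lemma \ref{bounded proj iff CS qgeo} to the relative HHS $(\mathcal{X},\mathfrak{T})$ (not to the original $\mathfrak{S}$): bounded projections in $\mathfrak{T}$ together with $\gamma$ being a quasi-isometric embedding imply that $\pi_S\circ\gamma$ is a quasi-isometric embedding into $\mathcal{T}_S$, as desired.

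For the reverse direction, suppose $\pi_S\circ\gamma$ is a quasi-isometric embedding into $\mathcal{T}_S$. Applying Lemma \ref{bounded proj iff CS qgeo} to $(\mathcal{X},\mathfrak{T})$ in the other direction gives that $\gamma(\mathcal{Y})$ has $D$-bounded projections in $\mathfrak{T}$ for some $D$. Then applying the first (non-``moreover'') half of Theorem \ref{abd4.4} to the relative HHS $(\mathcal{X},\mathfrak{T})$ yields that $\gamma(\mathcal{Y})$ is $D'$-contracting in $\mathcal{X}$. The last step is to convert ``contracting'' into ``stable embedding.'' This is standard: a contracting subspace $\mathcal{Y}$ of a geodesic metric space has the property that any quasi-geodesic with endpoints in $\mathcal{Y}$ stays in a uniform neighborhood of $\mathcal{Y}$, with the neighborhood constant depending only on the quasi-geodesic constants and the contracting constant (the projection map coarsely retracts the quasi-geodesic onto $\mathcal{Y}$ at a controlled rate). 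Two such quasi-geodesics with the same endpoints are then uniformly close to one another, which is precisely the stability condition.

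The step I expect to be the most subtle is the final ``contracting implies stable'' implication, since it is not explicitly proved earlier in the excerpt. In the write-up I would either cite a version from the literature (e.g.\ the equivalence between contracting and Morse for quasi-isometrically embedded subspaces, as used throughout \cite{abbott2021largest}) or give a short self-contained argument: take any $(k,c)$-quasi-geodesic $\alpha$ with endpoints in $\gamma(\mathcal{Y})$, iterate Definition \ref{contracting} along a discretization of $\alpha$ to conclude $\alpha\subset\mathcal{N}_{R}(\gamma(\mathcal{Y}))$ for $R$ depending only on $k$, $c$, and $D'$, and then invoke the fact that the quasi-geodesic $\gamma\circ\eta$ obtained from a geodesic $\eta$ in $\mathcal{Y}$ with the same endpoints lies in $\gamma(\mathcal{Y})$ and is therefore at controlled Hausdorff distance from $\alpha$. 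Everything else is just plugging the two big auxiliary results into each other in the correct order.
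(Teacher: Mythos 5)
Your proof takes essentially the same route as the paper: the forward direction chains the ``moreover'' clause of Theorem~\ref{abd4.4} with Lemma~\ref{bounded proj iff CS qgeo} applied to the maximized structure $(\mathcal{X},\mathfrak{T})$, and the reverse direction runs those same two results backward and then converts contracting to stable. The only place the paper is more specific is the final step, where it cites \cite[Corollary 4.3]{durham2015convex} for ``contracting implies stable,'' which is one of the two options you propose.
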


\begin{proof}
Let $\gamma:\mathcal{Y}\to\mathcal{X}$ be a quasi-isometric embedding. First, assume $\gamma$ is a stable embedding. By Theorem \ref{abd4.4}, $\gamma(\mathcal{Y})$ has $D'$-bounded projections. Thus Lemma \ref{bounded proj iff CS qgeo} implies $\pi_S\circ\gamma$ is a quasi-isometric embedding.

For the opposite direction, suppose $\pi_S\circ\gamma$ is a quasi-isometric embedding. Lemma \ref{bounded proj iff CS qgeo} implies that $\gamma(\mathcal{Y})$ has $D$-bounded projections. Theorem \ref{abd4.4} then implies that $\gamma(\mathcal{Y})$ is $D'$-contracting for some $D'$. Because $\gamma(\mathcal{Y})$ is contracting, it is a stable embedding by \cite[Corollary 4.3]{durham2015convex}. 
\end{proof}

\begin{cor}
\label{abd6.2}
Let $\mathcal{X}$ be a geodesic metric space and let $(\mathcal{X},\mathfrak{S})$ be a relative HHS with $|\mathfrak{S}|>1$, clean containers, the bounded domain dichotomy, and unbounded minimal products. Then $\mathcal{X}$ is Morse local-to-global.    
\end{cor}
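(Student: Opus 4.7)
The plan is to combine Theorem \ref{Mdetectable is MLTG} with Theorem \ref{main thm} by showing that the top level space $\mathcal{T}_S$ of the maximized structure $(\mathcal{X},\mathfrak{T})$ from Theorem \ref{clean containers ABD} is a Morse detectability space for $\mathcal{X}$. Since $(\mathcal{X},\mathfrak{S})$ has clean containers, the bounded domain dichotomy, and unbounded minimal products, the maximized structure $(\mathcal{X},\mathfrak{T})$ exists, is itself a relative HHS with $|\mathfrak{T}|>1$ inheriting all of these hypotheses (with relatively unbounded products automatically upgrading to unbounded products because Min $=\emptyset$), so Theorem \ref{main thm} applies with $(\mathcal{X},\mathfrak{T})$. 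Moreover, the factor map $\pi_S\colon\mathcal{X}\to\mathcal{T}_S$ is $(1,0)$-coarsely Lipschitz by the projections axiom verified in Theorem \ref{clean containers ABD}, and $\mathcal{T}_S$ is hyperbolic, so the basic setup required by Definition \ref{morse detectable def} is in place.

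To verify condition (1) of Definition \ref{morse detectable def}, I would take an $(M;\lambda,\varepsilon)$-Morse quasi-geodesic $\gamma\colon I\to\mathcal{X}$. By Lemma \ref{morse is stable}, $\gamma$ is an $(M;\lambda,\varepsilon)$-stable embedding, and then Theorem \ref{main thm} yields that $\pi_S\circ\gamma$ is a quasi-isometric embedding into $\mathcal{T}_S$, hence a $(k,c)$-quasi-geodesic. An inspection of the proofs of Theorem \ref{abd4.4} and Lemma \ref{bounded proj iff CS qgeo} (tracked through the stability gauge) shows that the resulting constants $(k,c)$ depend only on $\lambda$, $\varepsilon$, $M$, and the hierarchy data of $(\mathcal{X},\mathfrak{T})$, as required.

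For condition (2), suppose $\gamma\colon I\to\mathcal{X}$ is a $(\lambda,\varepsilon)$-quasi-geodesic such that $\pi_S\circ\gamma$ is a $(k,c)$-quasi-geodesic in $\mathcal{T}_S$. Then $\pi_S\circ\gamma$ is a quasi-isometric embedding, so the reverse direction of Theorem \ref{main thm} gives that $\gamma$ itself is an $(M';\lambda,\varepsilon)$-stable embedding for a stability gauge $M'$ determined by $\lambda,\varepsilon,k,c$, and the hierarchy data. To upgrade this to the Morse property, fix any $s<t$ in $I$ and any $(k',c')$-quasi-geodesic $\alpha$ with endpoints $\gamma(s)$ and $\gamma(t)$; the restriction $\gamma|_{[s,t]}$ is itself a $(\lambda,\varepsilon)$-quasi-geodesic with the same endpoints, so the stable embedding property applied to $\gamma|_{[s,t]}$ and $\alpha$ (taking $\max(\lambda,k')$ and $\max(\varepsilon,c')$) bounds their mutual Hausdorff distance by a quantity depending only on $M',\lambda,\varepsilon,k',c'$. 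This yields a Morse gauge for $\gamma$ depending on the required data, completing the verification that $\mathcal{T}_S$ is a Morse detectability space for $\mathcal{X}$; Theorem \ref{Mdetectable is MLTG} then concludes that $\mathcal{X}$ is Morse local-to-global.

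There is no real obstacle left once the machinery of Sections \ref{big section construction} and this section is invoked: Theorems \ref{clean containers ABD}, \ref{abd4.4}, and \ref{main thm} have already done the substantial work of producing the maximized structure and characterizing stable embeddings via projection to $\mathcal{T}_S$. The remaining task is a short translation between the stable embedding language of Theorem \ref{main thm} and the Morse language of Definition \ref{morse detectable def}, handled cleanly by Lemma \ref{morse is stable} in one direction and by the definition of stable embedding applied to $\gamma|_{[s,t]}$ and $\alpha$ in the other.
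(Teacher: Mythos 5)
Your proof takes the same route as the paper: establish that $\mathcal{T}_S$ is a Morse detectability space via Theorem \ref{main thm} (using Lemma \ref{morse is stable} for one direction and the characterization of stable embeddings for the other), then conclude with Theorem \ref{Mdetectable is MLTG}. The verification of conditions (1) and (2) of Definition \ref{morse detectable def} matches the paper's argument, and your explicit unwinding of why a stable quasi-geodesic is Morse (by comparing $\alpha$ with $\gamma|_{[s,t]}$ under the stability gauge) is a small addition of rigor where the paper simply asserts it.

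One misstep worth flagging: you assert that $(\mathcal{X},\mathfrak{T})$ satisfies $|\mathfrak{T}|>1$ and ``inherits all the hypotheses'' so that Theorem \ref{main thm} ``applies with $(\mathcal{X},\mathfrak{T})$.'' The claim $|\mathfrak{T}|>1$ is not justified (in principle $\mathfrak{T}$ could collapse to $\{S\}$), and more importantly it is unnecessary and subtly circular: Theorem \ref{main thm} takes a relative HHS as input and refers to the top level space of \emph{its} maximized structure, so if you really fed in $(\mathcal{X},\mathfrak{T})$, the ``$\mathcal{T}_S$'' in the conclusion would be the top level space of the maximization of $\mathfrak{T}$, not of $\mathfrak{S}$, and you would need a separate argument that these coincide. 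The clean fix — which is what the paper does — is to apply Theorem \ref{main thm} directly to $(\mathcal{X},\mathfrak{S})$, whose hypotheses ($|\mathfrak{S}|>1$, clean containers, bounded domain dichotomy, unbounded minimal products) are exactly what the corollary assumes; the statement of Theorem \ref{main thm} already references the maximized top level space for you. With that adjustment the rest of your argument goes through verbatim.
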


\begin{proof} The $E$-relative HHS $(\mathcal{X},\mathfrak{S})$ has the bounded domain dichotomy and unbounded minimal products, so Theorem \ref{clean containers ABD} implies there exists an $E'$-relative HHS structure $(\mathcal{X},\mathfrak{T})$ with unbounded products. Moreover, the construction of $\mathfrak{T}$ yields an $(E',E')$-coarsely Lipschitz map $\pi_S\colon\mathcal{X}\to\mathcal{T}_S$, where $\mathcal{T}_S$ is $E'$-hyperbolic. We will show that $\mathcal{X}$ is Morse detectable. 

For the first condition of Definition \ref{morse detectable def}, fix a $(M;\lambda,\varepsilon)$-Morse quasi geodesic $\gamma\colon I\to\mathcal{X}$. Then $\gamma$ is a stable embedding by Lemma \ref{morse is stable}. Thus Theorem \ref{main thm} implies $\pi_S\circ\gamma$ is a $(k,c)$-quasi-geodesic, where $k$ and $c$ are determined by $\lambda$, $\varepsilon$, $M$, and the hierarchy constants of $(\mathcal{X},\mathfrak{S})$.

For the second condition, let $\gamma\colon I\to\mathcal{X}$ be a $(\lambda,\varepsilon)$-quasi-geodesic such that $\pi_S\circ\gamma$ is a $(k,c)$-quasi-geodesic in $CS$. Theorem \ref{main thm} implies $\gamma$ is an $M'$-stable embedding, so in particular it is $M'$-Morse for some Morse gauge $M'$. Condition (2) of Definition \ref{morse detectable def} is thus satisfied. Therefore we have shown that $\mathcal{X}$ is Morse detectable, and hence it is Morse local-to-global by Theorem \ref{Mdetectable is MLTG}.
\end{proof}

\section{The Morse Local-to-Global Property for Graph Products}
\label{sec MLTG}

This section will ultimately show that graph products of infinite Morse local-to-global groups are Morse local-to-global. To begin, we specified in Theorem \ref{clean containers ABD} that the initial relative HHS structure should have clean containers both because the resulting structure is a genuine relative HHS, and because graph products, when viewed as an HHG, admit clean containers, as seen in the following proposition.

\begin{prop}
\label{graph prod clean}
Graph products admit a relative HHG structure with clean containers.
\end{prop}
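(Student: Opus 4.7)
The plan is to use the relative HHG structure on graph products from Theorem~\ref{graph prod relations} and to exhibit, for each nesting pair $U\sqsubseteq W$, an explicit container for $U$ in $W$ that is already orthogonal to $U$. Given $W,U\in\mathfrak{S}$ with $U\sqsubseteq W$ and $\mathfrak{S}_W\cap\mathfrak{S}^{\perp}_U\ne\emptyset$, I will first use the characterization of nesting in Theorem~\ref{graph prod relations} to choose a single $k\in G_\Gamma$ so that $W=[kW']$ and $U=[kU']$ with $U'\subseteq W'$ as induced subgraphs of $\Gamma$. I will then set $Q':=W'\cap\text{lk}(U')$, viewed as an induced subgraph, and take $Q:=[kQ']$ as the candidate clean container.

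There are three things to verify. First, $Q\sqsubsetneq W$: nesting is witnessed by $Q'\subseteq W'$ together with the shared representative $k$, and strictness holds because every vertex of $U'$ lies in $W'\setminus Q'$ (no vertex of a simplicial graph lies in its own link) and $U'$ is non-empty, as ensured by $\mathfrak{S}_W\cap\mathfrak{S}^{\perp}_U\ne\emptyset$. Second, $Q\perp U$: this is immediate from $Q'\subseteq\text{lk}(U')$ and the shared representative $k$. Third, every $V\in\mathfrak{S}_W\cap\mathfrak{S}^{\perp}_U$ must satisfy $V\sqsubseteq Q$; the subgraph containment $V'\subseteq Q'$ is automatic, because $V\sqsubseteq W$ gives $V'\subseteq W'$ and $V\perp U$ gives $V'\subseteq\text{lk}(U')$.

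The main obstacle is producing, in this third item, a common representative for $V$ and $Q$. The plan is to exploit the star-link decomposition $G_{\text{st}(X)}=G_X\cdot G_{\text{lk}(X)}$, which holds because vertices in $X$ commute with those in $\text{lk}(X)$, together with the link inclusions $U'\cup\text{lk}(W')\subseteq\text{lk}(V')$ that follow from $V'\subseteq W'\cap\text{lk}(U')$. Using these, I can reduce the representative of $V$ coming from $V\sqsubseteq W$ to one of the form $kw$ with $w\in G_{W'}$, and the representative coming from $V\perp U$ to one of the form $ku$ with $u\in G_{\text{lk}(U')}$; the compatibility condition $w^{-1}u\in G_{\text{st}(V')}$ then forces the syllables of $u$ in $\text{lk}(U')\setminus W'$ to lie in $\text{lk}(V')$, and these syllables commute with every vertex of $V'$, allowing $u$ to be rewritten in $G_{\text{st}(Q')}\cdot G_{\text{st}(V')}$. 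This yields a common representative of $V$ and $Q$ in $k\cdot G_{\text{st}(Q')}$, completing the verification.
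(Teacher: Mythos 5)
Your choice of container coincides with the paper's (up to notation): both take $Q = [k(\text{lk}(U')\cap W')]$, the parallelism class of the induced subgraph $W'\cap\text{lk}(U')$ translated by the shared representative $k$. Your first two verifications — that $Q\sqsubsetneq W$ and $Q\perp U$ — are correct and straightforward, and agree with how the paper handles the ``clean'' part via \cite[Theorem 3.23]{berlyne2022hierarchical}. Where your route diverges is the third verification: the paper does not reprove that $Q$ satisfies the container axiom; it cites \cite[Lemma 4.6]{berlyne2022hierarchical}, which is exactly the statement that $[a(\text{lk}(\Omega)\cap\Lambda)]$ is a container. You attempt to rederive this from the definition of nesting via a normal-form argument, and this is where there is a genuine gap.

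The gap is in the final sentence of your sketch: ``these syllables commute with every vertex of $V'$, allowing $u$ to be rewritten in $G_{\text{st}(Q')}\cdot G_{\text{st}(V')}$.'' You correctly argue that the syllables of a reduced expression for $u\in G_{\text{lk}(U')}$ supported on $\text{lk}(U')\setminus W'$ must survive in a reduced form of $w^{-1}u\in G_{\text{st}(V')}$ (they have no counterpart in $w\in G_{W'}$ to cancel against), and hence lie in $\text{lk}(V')$. But this classification alone does not let you factor $u$ into a piece in $G_{\text{st}(Q')}$ times a piece in $G_{\text{st}(V')}$. The syllables on $\text{lk}(U')\setminus W'$ commute with $V'$, but there is no reason for them to commute with the $Q'$-syllables of $u$, so you cannot simply shuffle the ``outside'' syllables to one end of the word. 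Exhibiting the common representative amounts to showing $u\in G_{\text{st}(Q')}\cdot G_{\text{st}(V')}$, and this requires a more delicate argument tracking how the reduction of $w^{-1}u$ intertwines the two parabolics — which is essentially the content of \cite[Lemma 4.6]{berlyne2022hierarchical}. As written, the step is asserted but not established; either supply the full normal-form argument (keeping careful track of how adjacent $Q'$- and $(\text{lk}(U')\setminus W')$-syllables interact) or, like the paper, cite the existing container lemma and reserve your own work for the orthogonality of $Q$ with $U$.
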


\begin{proof} Let $G_{\Gamma}$ be a graph product. Equip $G_{\Gamma}$ with the relative HHG structure from Theorem \ref{berlyne russell}. Consider parallelism classes $[h\Omega]\sqsubsetneq [g\Lambda]$ and $[k\Pi]$ such that $[k\Pi]\sqsubseteq [g\Lambda]$ and $[k\Pi]\perp[h\Omega]$. Then \cite[Lemma 4.6]{berlyne2022hierarchical} yields the container $[a(\text{lk}(\Omega)\cap\Lambda)]$ where $a\in G_{\Gamma}$ satisfies $[a\Lambda]=[g\Lambda]$ and $[a\Omega]=[h\Omega]$. Clearly $\text{lk}(\Omega)\cap\Lambda\subset\text{lk}(\Omega)$, and $[a\Omega]=[h\Omega]$ by construction, so $[a(\text{lk}(\Omega)\cap\Lambda)]\perp[h\Omega]$ by \cite[Theorem 3.23]{berlyne2022hierarchical}. Therefore $G_{\Gamma}$ has clean containers.
\end{proof}

\begin{prop}
\label{graph prod structure}
Graph products of infinite groups with no isolated vertices admit a relative HHG structure with $|\mathfrak{S}|>1$, clean containers, the bounded domain dichotomy, and unbounded minimal products.
\end{prop}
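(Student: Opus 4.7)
The plan is to verify each of the four properties for the specific relative HHG structure on $G_\Gamma$ produced by Theorem \ref{berlyne russell}, whose data is spelled out in Theorem \ref{graph prod relations}. Two of the four are immediate: clean containers is given by Proposition \ref{graph prod clean}, and the bounded domain dichotomy is automatic for any relative HHG, since cofiniteness of the $G$-action on $\mathfrak{S}$ together with the equivariant isometries $g_W\colon CW \to C(gW)$ forces $\mathrm{diam}(CU)$ to take only finitely many distinct values as $U$ ranges over a set of orbit representatives, after which infinite diameters persist in every $G$-orbit and finite ones remain uniformly bounded.

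For $|\mathfrak{S}|>1$, the hypothesis of no isolated vertices means $\Gamma$ contains at least one edge, hence at least one vertex $v$ with $\{v\}\subsetneq \Gamma$, and then $[1\{v\}]$ and $[1\Gamma]$ are distinct parallelism classes by the nesting criterion.

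The substantive content is unbounded minimal products. The first step is to identify the $\sqsubseteq$-minimal parallelism classes in this structure as precisely the single-vertex classes $[g\{v\}]$, which is visible from the nesting criterion of Theorem \ref{graph prod relations} together with the fact that the empty induced subgraph is excluded from the Berlyne--Russell index set. For such a minimal $U = [g\{v\}]$, one has $\mathfrak{S}_U = \{U\}$, so $\mathbf{F}_U$ is coarsely equal to $CU$, which in the Berlyne--Russell construction is quasi-isometric to a Cayley graph of $G_v$; since $G_v$ is infinite, $\mathbf{F}_U$ is unbounded. To obtain unboundedness of $\mathbf{E}_U$, I would invoke the no-isolated-vertices hypothesis: $v$ has a neighbor $u\in\mathrm{lk}(\{v\})\subset\Gamma$, so by the orthogonality criterion (taking the conjugating element $k$ to be $g$ itself) we get $[g\{u\}]\perp [g\{v\}]$. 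Thus $[g\{u\}]\in\mathfrak{S}^\perp_U$, and $C[g\{u\}]$ is unbounded because $G_u$ is infinite, so consistent tuples supported on this coordinate exhibit $\mathbf{E}_U$ as unbounded.

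The only delicate point is confirming that the associated space $CU$ for a single-vertex parallelism class is genuinely unbounded whenever the vertex group is infinite; this is an unwinding of the definitions in \cite{berlyne2022hierarchical} rather than a true obstacle. Once that is in hand the four-part verification is essentially bookkeeping, with the decisive structural input being the orthogonality produced by edges of $\Gamma$ — the precise role played by the no-isolated-vertices hypothesis.
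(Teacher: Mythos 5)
Your proposal is correct and follows essentially the same route as the paper's proof: verify clean containers via Proposition \ref{graph prod clean}, bounded domain dichotomy from the relative HHG definition, $|\mathfrak{S}|>1$ from $\Gamma$ having more than one vertex, and unbounded minimal products by observing that a $\sqsubseteq$-minimal domain is a single-vertex parallelism class, using the no-isolated-vertices hypothesis to produce an orthogonal vertex domain whose associated space is unbounded because the vertex group is infinite. The only cosmetic difference is that you spell out the cofiniteness-plus-equivariance argument for the bounded domain dichotomy and the coarse identification $\mathbf{F}_U\simeq CU$ for minimal $U$, both of which the paper takes as read.
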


\begin{proof} Let $G$ be a graph product with an associated finite simplicial graph $\Gamma$ with no isolated vertices and let all the vertex groups of $G$ be infinite. By \cite[Theorem 4.22]{berlyne2022hierarchical} there is a relative HHG structure on $G$ for which domains are, in the language of \cite{berlyne2022hierarchical}, parallelism classes of cosets $g\Lambda$, where $\Lambda$ is a subgroup corresponding to the subgraph $\Lambda\subseteq\Gamma$. In particular, because $\Gamma$ has no isolated vertices, it has more than one vertex, so $|\mathfrak{S}|>1$. Moreover, $G$  has the bounded domain dichotomy by the definition of a relative HHG and clean containers by Proposition \ref{graph prod clean}. Finally, let $[g\Lambda]\in\mathfrak{S}$ be a $\sqsubseteq$-minimal domain with $\mathbf{F}_{[g\Lambda]}$ unbounded. The domain $[g\Lambda]$ is $\sqsubseteq$-minimal, which means that $\Lambda$ contains no proper subgraphs, so it is a single vertex. Moreover, $\Gamma$ is has no isolated vertices, so there exists another vertex connected to $\Lambda$, whose subgraph we will label by $\Omega$. Then by \cite[Theorem 3.23]{berlyne2022hierarchical}, the domains $[g\Lambda]$ and $[g\Omega]$ are orthogonal. Because each vertex corresponds to an infinite group by assumption, $C([g\Omega])$ is unbounded, so $\mathbf{E}_{[g\Lambda]}$ is unbounded. Therefore $G$ has unbounded minimal products. Thus the Cayley graph of $G$ is a geodesic metric space and a relative HHS with $|\mathfrak{S}|>1$, clean containers, the bounded domain dichotomy, and unbounded minimal products.
\end{proof}

The following corollary follows immediately from Proposition \ref{graph prod structure} and Corollary \ref{abd6.2}.

\begin{cor}
\label{multivertex}
Graph products of infinite groups with no isolated vertices are Morse local-to-global.
\end{cor}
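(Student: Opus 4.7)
The plan is to observe that Corollary \ref{multivertex} is an immediate consequence of combining two already-established results in the paper: Proposition \ref{graph prod structure}, which hands us precisely the required relative HHS structure on the Cayley graph, and Corollary \ref{abd6.2}, which is the general Morse local-to-global criterion for relative HHS spaces satisfying exactly those hypotheses.

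First I would let $G_\Gamma$ be a graph product of infinite groups whose defining graph $\Gamma$ has no isolated vertices, and fix a finite generating set so that the Cayley graph $\mathcal{X}=\mathrm{Cay}(G_\Gamma)$ is a geodesic metric space. Then I would invoke Proposition \ref{graph prod structure} to equip $\mathcal{X}$ with a relative HHG structure $(\mathcal{X},\mathfrak{S})$ satisfying the four hypotheses needed downstream: $|\mathfrak{S}|>1$ (since $\Gamma$ has at least one edge), clean containers (from Proposition \ref{graph prod clean}), the bounded domain dichotomy (automatic for relative HHG), and unbounded minimal products (since every $\sqsubseteq$-minimal vertex-class domain is orthogonal to a parallelism class of an adjacent infinite vertex group).

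Next I would apply Corollary \ref{abd6.2} directly to $(\mathcal{X},\mathfrak{S})$. That corollary exactly matches the hypotheses produced in the previous step and concludes that $\mathcal{X}$ is Morse local-to-global. Finally, since Morse local-to-global is a quasi-isometry invariant (as recorded after the definition following Theorem \ref{Mdetectable is MLTG}), the conclusion is independent of the choice of finite generating set, and hence $G_\Gamma$ itself is a Morse local-to-global group.

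There is no real obstacle to overcome here: all of the substantive work, including the maximization procedure from Theorem \ref{clean containers ABD}, the equivalence of bounded projections and contracting in Theorem \ref{abd4.4}, and the Morse detectability argument of Corollary \ref{abd6.2}, has already been carried out, and the role of the present corollary is purely to package Proposition \ref{graph prod structure} with Corollary \ref{abd6.2}. The only thing to be careful about is ensuring every hypothesis of Corollary \ref{abd6.2} is verified, which is precisely the content of Proposition \ref{graph prod structure}.
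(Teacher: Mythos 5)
Your proof is correct and follows essentially the same route as the paper: the paper dispatches Corollary \ref{multivertex} as an immediate consequence of Proposition \ref{graph prod structure} and Corollary \ref{abd6.2}, exactly as you do. Your added remark about quasi-isometry invariance of the generating set is accurate but unnecessary, since Corollary \ref{abd6.2} already applies to the Cayley graph directly.
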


For a graph product with no isolated vertices, the top level space in the maximized structure can be described explicitly. This space, as described in the following corollary, was known to be hyperbolic by \cite[Proposition 6.4]{genevois2024automorphisms} and \cite[Lemma 4.1]{berlyne2022hierarchical}. Corollary \ref{bckmt} also mirrors \cite[Theorem 1.2]{balasubramanya2025stable}, which was proven simultaneously and independently, and provides a similar space for a graph product of infinite groups with no isolated vertices. 

\begin{cor}
\label{bckmt}
Let $G_\Gamma$ be a graph product of infinite groups with no isolated vertices. Let $S$ be a finite generating set for $G_{\Gamma}$, and let $H\leq G_{\Gamma}$ be a finitely generated subgroup. Then $H$ is stable if and only if the orbit maps of $H$ into
$$ \text{\normalfont{Cay}}\left(G_{\Gamma}, \bigcup_{\{\Lambda\subset\Gamma\;|\;\text{\normalfont{lk}}(\Lambda)\ne\emptyset\}}G_{\text{\normalfont{st}}(\Lambda)}-\{\text{\normalfont{id}}\}\right)$$
are quasi-isometric embeddings.
\end{cor}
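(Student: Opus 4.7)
The plan is to apply Theorem \ref{main thm} to the Berlyne--Russell relative HHG structure on $\mathcal{X}=\text{Cay}(G_\Gamma,S)$ from Theorem \ref{graph prod relations} and then identify the top level space $\mathcal{T}_S$ of the maximized structure with the Cayley graph $\mathcal{Y}$ in the statement. By Proposition \ref{graph prod structure}, the Berlyne--Russell structure satisfies the hypotheses of Theorem \ref{main thm}, so $H$ is stable if and only if $\pi_S\colon H\to\mathcal{T}_S$ is a quasi-isometric embedding. Once $\mathcal{T}_S$ is shown to be $G_\Gamma$-equivariantly quasi-isometric to $\mathcal{Y}$, the orbit maps $H\to\mathcal{Y}$ are conjugate via this quasi-isometry to $\pi_S|_H$, and the corollary follows.

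To pin down $\mathcal{T}_S$, I first identify the coned-off set $\mathfrak{S}^{M+}$. From the description of domains, nesting, and orthogonality in Theorem \ref{graph prod relations}, a parallelism class $[g\Lambda]$ belongs to $\mathfrak{S}^M$ precisely when there is a subgraph $\Omega\supseteq\Lambda$ with $\text{lk}(\Omega)\neq\emptyset$, which by antitonicity of $\text{lk}$ collapses to the single condition $\text{lk}(\Lambda)\neq\emptyset$. Since every $\sqsubseteq$-minimal class $[gv]$ already satisfies this (as $\Gamma$ has no isolated vertices), we obtain
\[ \mathfrak{S}^{M+}=\{[g\Lambda]\;:\;\emptyset\neq\Lambda\subsetneq\Gamma,\;\text{lk}(\Lambda)\neq\emptyset\}. \]

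To translate $\widehat{\mathcal{X}}_{\mathfrak{S}^{M+}}$ into a Cayley graph, I would use that for each $[g\Lambda]\in\mathfrak{S}^{M+}$ the slices $\mathbf{F}_{[g\Lambda]}\times\{\vec{e}\}$ are uniformly quasi-isometric to cosets of $G_\Lambda$, and as $g$, $\Lambda$, and $\vec{e}$ vary they exhaust every coset of every $G_\Lambda$ with $\text{lk}(\Lambda)\neq\emptyset$. So $\mathcal{T}_S$ is equivariantly quasi-isometric to $\mathcal{Y}_1=\text{Cay}\bigl(G_\Gamma,\,S\cup\bigcup_{\text{lk}(\Lambda)\neq\emptyset}G_\Lambda-\{\text{id}\}\bigr)$. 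The comparison of $\mathcal{Y}_1$ with $\mathcal{Y}$ is a short generating-set argument: every generator of $\mathcal{Y}$ factors as $hk$ with $h\in G_\Lambda$, $k\in G_{\text{lk}(\Lambda)}$, and $\text{lk}(\Lambda)\neq\emptyset$, and since $\Lambda\subseteq\text{lk}(\text{lk}(\Lambda))$ the subgroup $G_{\text{lk}(\Lambda)}$ also appears in the generating set of $\mathcal{Y}_1$, giving each $\mathcal{Y}$-generator $\mathcal{Y}_1$-length at most $2$; conversely every generator of $\mathcal{Y}_1$ sits inside some $G_{\text{st}(\Lambda)}$ in the generating set of $\mathcal{Y}$. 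The main obstacle is the bookkeeping in this identification $\mathcal{T}_S\simeq\mathcal{Y}_1\simeq\mathcal{Y}$ --- matching slices of product regions with cosets, and checking that the maximization procedure of Theorem \ref{clean containers ABD} picks out exactly the subgraphs with nonempty link. Once these identifications are in place, the corollary is an immediate consequence of Theorem \ref{main thm}.
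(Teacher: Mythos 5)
Your proposal follows essentially the same route as the paper: apply Theorem \ref{main thm} via Proposition \ref{graph prod structure}, then identify the top level space $\mathcal{T}_S$ of the maximized structure with the coned-off Cayley graph in the statement; the generating-set comparison $\mathcal{Y}_1\simeq\mathcal{Y}$ you give is exactly the content of the Proposition the paper places immediately after this corollary. One caveat worth flagging, which the paper's terse proof shares: the assertion that $[g\Lambda]\in\mathfrak{S}^{M}$ precisely when $\text{lk}(\Lambda)\neq\emptyset$ is not literally correct, because Definition \ref{Sm} also demands that some $CV$ with $[g\Lambda]\sqsubseteq V$ have large diameter. For example, in $\Gamma=K_4$ the class $[g\{a,b\}]$ has $\text{lk}(\{a,b\})\ne\emptyset$, yet every $\Omega\supseteq\{a,b\}$ is a join so $C[g\Omega]$ is bounded, and $[g\{a,b\}]$ is not $\sqsubseteq$-minimal, whence $[g\{a,b\}]\notin\mathfrak{S}^{M+}$. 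The quasi-isometry $\mathcal{T}_S\simeq\mathcal{Y}_1$ still holds because any such ``missing'' $G_\Lambda$ is a join $G_{\Lambda_1}\times G_{\Lambda_2}$ whose factors are already coned off (since $\Lambda_i\subseteq\text{lk}(\Lambda_{3-i})$ gives $\text{lk}(\Lambda_i)\ne\emptyset$), but this extra step should be spelled out rather than absorbed into the claim that $\mathfrak{S}^{M+}$ equals the link-nonempty set.
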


\begin{proof}
Let $(G_{\Gamma},\mathfrak{S})$ be the relative HHG structure on $G_{\gamma}$ and let $(G_{\Gamma},\mathfrak{T})$ be the relative HHG structure with unbounded products from Proposition \ref{graph prod structure} and Theorem \ref{clean containers ABD}. Let $\mathcal{T}_S$ be the top level space for $(G_{\Gamma},\mathfrak{T})$, so Theorem \ref{main thm} implies subgroup $H\leq G_{\Gamma}$ is stable if and only if the orbit maps of $H$ into $\mathcal{T}_S$ are quasi-isometric embeddings. All that remains to show is that $\mathcal{T}_S$ is obtained by coning-off the subgroups associated to induced subgraphs with nonempty links. Following the construction in Theorem \ref{clean containers ABD}, the top level space $\mathcal{T}_S$ is obtained by coning off the slices $\mathbf{F}_U\times\{\Vec{e}\}$ for all $U$ with both $\mathbf{F}_U$ and $\mathbf{E}_U$ unbounded, so the product regions of these domains will be coned-off. For a multi-vertex connected graph product of infinite groups, every parallelism class has unbounded $\mathbf{F}_U$ because $\mathbf{F}_U$ contains the Cayley graph of at least one vertex group, all of which are infinite. For some parallelism class $U=[g\Lambda]$ to have unbounded $\mathbf{E}_U$, there must exist another parallelism class $V=[h\Omega]$ such that $[g\Lambda]\perp [h\Omega]$ and $\mathfrak{T}_V$ contains a domain with an infinite associated geodesic space, which again will always happen as long as $[h\Omega]$ exists because every vertex group is infinite. Recall that Definition \ref{graph prod relations} states $[g\Lambda]\perp [h\Omega]$ if and only if $\Lambda\subseteq\text{lk}(\Omega)$ and and there exists a group element $k\in G_{\Gamma}$ such that $[g\Lambda]=[k\Lambda]$ and $[h\Omega]=[k\Omega]$. Thus the parallelism classes with unbounded $\mathbf{E}_U$ are exactly the those with a non-empty link. Because the domains associated to the product region of a parallelism class are exactly those in the star of its induced subgraph, the resultant space is exactly that in the statement of the corollary.
\end{proof}

It is worth noting that the spaces in \cite[Theorem 1.2]{balasubramanya2025stable} and this paper are quasi-isometric. The space in \cite[Theorem 1.2]{balasubramanya2025stable} is the contact graph of the prism complex, which is quasi-isometric to the space given by coning off cosets of the star graphs of vertices \cite{genevois2017cubical, genevois2019automorphisms}, in the language of \cite[Lemma 2.8]{balasubramanya2025stable}. We instead cone off cosets of the star graphs of all induced subgraphs with non-empty links. 

\begin{prop} Let $G_\Gamma$ be a connected graph product. Then
$$ \text{\normalfont{Cay}}\left(G_{\Gamma}, \bigcup_{\{\Lambda\subset\Gamma\;|\;\text{\normalfont{lk}}(\Lambda)\ne\emptyset\}}G_{\text{\normalfont{st}}(\Lambda)}-\{\text{\normalfont{id}}\}\right)
\simeq_{\text{\normalfont{Q.I.}}} 
\text{\normalfont{Cay}}\left(G_{\Gamma}, \bigcup_{v\in V(\Gamma)}G_{\text{\normalfont{st}}(v)}-\{\text{\normalfont{id}}\}\right).$$
\end{prop}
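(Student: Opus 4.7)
The plan is to show that the identity map on $G_\Gamma$ is a quasi-isometry between the two Cayley graphs by verifying Lipschitz bounds in both directions. The argument is purely combinatorial and relies only on the defining commutation relations of a graph product together with elementary facts about links and stars of induced subgraphs.

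For the easy direction, since $\Gamma$ is connected (and implicitly has at least two vertices, or else the statement is trivial), every vertex $v \in V(\Gamma)$ satisfies $\text{lk}(v) \neq \emptyset$. Hence the singleton $\{v\}$ is an induced subgraph with nonempty link, so $G_{\text{st}(v)} - \{\text{id}\}$ already appears in the generating set of the left-hand Cayley graph. This makes the right-hand generating set a subset of the left-hand one, so the identity map from the right-hand Cayley graph to the left-hand one is $1$-Lipschitz.

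The other direction is the content of the proposition. I claim that every element $g \in G_{\text{st}(\Lambda)}$, for $\Lambda$ an induced subgraph with $\text{lk}(\Lambda) \neq \emptyset$, lies at distance at most $2$ from the identity in the right-hand Cayley graph. Since every vertex of $\Lambda$ commutes with every vertex of $\text{lk}(\Lambda)$ by definition of link, the subgroup $G_{\text{st}(\Lambda)}$ decomposes as the internal direct product $G_\Lambda \times G_{\text{lk}(\Lambda)}$. Write $g = ab$ with $a \in G_\Lambda$ and $b \in G_{\text{lk}(\Lambda)}$. Choose any $v \in \text{lk}(\Lambda)$, which exists by hypothesis: because $v$ is adjacent to every vertex of $\Lambda$, we have $\Lambda \subseteq \text{lk}(v) \subseteq \text{st}(v)$, and therefore $a \in G_\Lambda \subseteq G_{\text{st}(v)}$. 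Similarly, choose any $u \in \Lambda$: because every vertex of $\text{lk}(\Lambda)$ is adjacent to $u$, we have $\text{lk}(\Lambda) \subseteq \text{lk}(u) \subseteq \text{st}(u)$, giving $b \in G_{\text{lk}(\Lambda)} \subseteq G_{\text{st}(u)}$. Hence $g = a \cdot b$ is a product of two generators of the right-hand Cayley graph, so the identity from the left-hand Cayley graph to the right-hand one is $2$-Lipschitz.

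Combining the two Lipschitz bounds, the identity map is a $(2,0)$-quasi-isometry. I do not expect any genuine obstacle in carrying out this proof: everything reduces to the two combinatorial inclusions $\Lambda \subseteq \text{lk}(v)$ for $v \in \text{lk}(\Lambda)$ and $\text{lk}(\Lambda) \subseteq \text{lk}(u)$ for $u \in \Lambda$, both immediate from the definition of the link. The only subtlety is the degenerate case where $\Gamma$ consists of a single vertex (so no $\Lambda$ has nonempty link), which either falls outside the intended hypotheses or is handled separately as a trivial case.
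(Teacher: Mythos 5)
Your proof is correct and follows essentially the same route as the paper: both directions are the containment of generating sets (one way immediate from connectedness) and the decomposition $G_{\text{st}(\Lambda)} = G_\Lambda \times G_{\text{lk}(\Lambda)}$ together with the inclusions $\Lambda \subseteq \text{st}(v)$ and $\text{lk}(\Lambda) \subseteq \text{st}(u)$. The only presentational difference is that the paper passes through an explicit rearrangement of a word $g_1\cdots g_n$ into $h_1\cdots h_m k_1\cdots k_m$ before collapsing to two syllables, whereas you go directly to $g = ab$ via the internal direct product; your version is slightly cleaner. You also explicitly flag the degenerate single-vertex case, which the paper's proof silently assumes away (the paper asserts every vertex has nonempty link, which requires $|V(\Gamma)| \geq 2$); this is a minor but genuine improvement in care.
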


\begin{proof} Label the cone-offs
$$ A = \text{\normalfont{Cay}}\left(G_{\Gamma}, \bigcup_{\{\Lambda\subset\Gamma\;|\;\text{\normalfont{lk}}(\Lambda)\ne\emptyset\}}G_{\text{\normalfont{st}}(\Lambda)}-\{\text{\normalfont{id}}\}\right);B = \text{\normalfont{Cay}}\left(G_{\Gamma}, \bigcup_{v\in V(\Gamma)}G_{\text{\normalfont{st}}(v)}-\{\text{\normalfont{id}}\}\right).$$
Because $G_{\Gamma}$ is a connected graph product, a single vertex is a subgraph with a non-empty link, so $A$ is a cone-off of $B$. It remains to show that for any $\Lambda\subset\Gamma$ with non-empty link, the length of any word contained entirely in $G_{\text{st}(\Lambda)}$ has length bounded by a uniform constant in $B$. Let $g=g_1g_2\cdots g_n$ be some word in $G_{\Gamma}$ contained entirely in $G_{\text{st}(\Lambda)}=G_{\Lambda}\times G_{\text{lk}(\Lambda)}$. Each element of $g$ is either contained in $G_{\Lambda}$ or $G_{\text{lk}(\Lambda)}$, and $g$ can be written as $g = h_1k_1h_2k_2\cdots h_mk_m$, where $h_i\in G_{\Lambda}$ and $k_i\in G_{\text{lk}(\Lambda)}$ for $i\in\{1,2,...,m\}$. Since elements of $G_{\Lambda}$ and $G_{\text{lk}(\Lambda)}$ commute, $g$ can be rearraged to be $h_1h_2\cdots h_mk_1k_2\cdots k_m$. Fix vertices $v,w$ such that $v\in \text{lk}(\Lambda)$ and $w\in \Lambda$. Then $h_1h_2\cdots h_m\in \text{st}(v)$ and $k_1k_2\cdots k_m\in\text{st}(w)$. Thus $g$ has length 2 in $B$, so $A$ and $B$ are quasi-isometric.    
\end{proof}

Finally, we generalize Corollary \ref{multivertex} to the case of graph products that have isolated vertices, when every vertex group is an infinite Morse local-to-global group.

\begin{cor}
Graph products of infinite Morse local-to-global groups are Morse local-to-global.
\end{cor}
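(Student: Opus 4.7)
The plan is to reduce the general case to Corollary \ref{multivertex} by splitting off any isolated vertices as a free product. Let $\Gamma_{\mathrm{iso}}=\{v_1,\dots,v_k\}$ denote the set of isolated vertices of $\Gamma$ and let $\Gamma'$ be the induced subgraph on the remaining vertices. Since an isolated vertex shares no edge with any other vertex of $\Gamma$, the defining commutation relations of the graph product never involve its vertex group, so the graph product splits as the free product
$$G_\Gamma \;\cong\; G_{v_1} * G_{v_2} * \cdots * G_{v_k} * G_{\Gamma'}.$$

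Next, I would verify that each factor on the right is Morse local-to-global. Each vertex group $G_{v_i}$ is Morse local-to-global by hypothesis. For the factor $G_{\Gamma'}$, if $\Gamma'$ is empty it is simply dropped from the decomposition; otherwise $\Gamma'$ is a finite simplicial graph with no isolated vertices, and all of its vertex groups are infinite, so Corollary \ref{multivertex} applies directly and yields that $G_{\Gamma'}$ is Morse local-to-global.

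To assemble the factors, I would appeal to the observation recorded in Section \ref{subsec MLTG}: a free product of finitely many groups is hyperbolic relative to its free factors, so iterated application of \cite[Theorem 5.1]{russell2022local} promotes the Morse local-to-global property from each factor to the whole free product. Combined with the decomposition above, this gives that $G_\Gamma$ is Morse local-to-global.

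The substantive work, namely constructing the maximized relative HHS structure (Theorem \ref{clean containers ABD}) and exhibiting a Morse detectability space (Corollary \ref{abd6.2}), has already been carried out in order to establish Corollary \ref{multivertex}. Consequently, the remaining argument is purely combinatorial, and I expect no real obstacle beyond minor bookkeeping in the edge cases where $\Gamma$ has no isolated vertices (Corollary \ref{multivertex} applies directly) or where every vertex is isolated (only the vertex-group factors appear in the free product).
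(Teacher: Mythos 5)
Your proof is correct and takes essentially the same approach as the paper: both arguments reduce to Corollary \ref{multivertex} via a free-product decomposition of $G_\Gamma$ and then invoke \cite[Theorem 5.1]{russell2022local} to pass the Morse local-to-global property across the free product. The only (cosmetic) difference is the decomposition: the paper splits $\Gamma$ into its connected components, whereas you peel off the isolated vertices and treat the remainder $G_{\Gamma'}$ as a single factor; both work since removing degree-zero vertices leaves no new isolated vertices and Corollary \ref{multivertex} does not require $\Gamma'$ to be connected.
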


\begin{proof} Suppose $\Gamma$ is the finite simplicial graph associated to the graph product $G$. By the definition of a graph product, $G$ is the free product of its connected components. Thus if each subgroup associated to a connected component of $\Gamma$ is Morse local-to-global, then $G$ is Morse local-to-global by \cite[Theorem 5.1]{russell2022local}. Let $H$ be the subgroup associated to a single connected component $\Lambda\subseteq\Gamma$. If $\Lambda$ is a single vertex, then $H$ is Morse local-to-global by assumption. If $\Lambda$ contains more than one vertex, then $H$ is a graph product of infinite groups with no isolated vertices, and is Morse local-to-global by Corollary \ref{multivertex}.    
\end{proof}

\bibliographystyle{alpha}
\bibliography{M335}

\end{document}